\renewcommand{\baselinestretch}{1.25}
\newlength{\vslength}
\newcommand{\whitespace}{\vspace{\vslength}\par}
\newcommand{\ie}{{\it i.e.}}
\newcommand{\cf}{{\it c.f.}}
\newcommand{\eg}{{\it e.g.}}
\newcommand{\rhs}{{\it r.h.s.}}
\newcommand{\iid}{{\it i.i.d.}}
\newcommand{\GG}{{\mathbb G}}
\newcommand{\PP}{{\mathbb P}}
\newcommand{\RR}{{\mathbb R}}
\newcommand{\scrF}{{\mathscr F}}
\newcommand{\scrL}{{\mathscr L}}
\newcommand{\scrP}{{\mathscr P}}
\newcommand{\score}{{\dot{\ell}}}
\newcommand{\Exp}{\mathrm{Exp}^{+}}
\newcommand{\NExp}{\mathrm{Exp}^{-}}
\newcommand{\1}{\mathbf{1}}
\newcommand{\wt}{\widetilde}
\newcommand{\ep}{{\epsilon}}
\newcommand{\tht}{{\theta}}
\newcommand{\Tht}{{\Theta}}
\newcommand{\sample}{{\underline{X}}}
\renewcommand{\emptyset}{{\varnothing}}
\newcommand{\supp}{\mathop{\rm supp}}
\newcommand{\convas}[1]%
  {{\mathrel{\,\stackrel{#1-a.s.}{\longrightarrow}\,}}}
\newcommand{\convprob}[1]%
  {{\mathrel{\,\stackrel{#1}{\longrightarrow}\,}}}
\newcommand{\convweak}[1]%
  {{\mathrel{\,\stackrel{#1}{\rightsquigarrow}\,}}}
\newcommand{\twobytwo}[4]%
  {\left(\begin{array}{cc} #1 & #2 \\ #3 & #4 \end{array}\right)}
\newcommand{\twovec}[2]%
  {\left({\begin{array}{c} #1\\#2 \end{array}}\right)}
\newtheorem{theorem}{Theorem}[section]
\newtheorem{lemma}{Lemma}[section]
\newtheorem{proposition}{Proposition}[section]
\newtheorem{corollary}{Corollary}[section]
\newtheorem{definition}[theorem]{Definition}
\theoremstyle{remark}
\newtheorem{example}{Example}[section]
\renewenvironment{proof}{\noindent{\bf Proof}\;\;}{\hfill$\Box$\par}
\newcommand{\comment}[1]{{}}
\newcommand{\extra}[1]{{}}
\begin{document}

\thispagestyle{empty}

\title{\vspace*{-9mm}
  Semiparametric posterior limits\\[-1mm]under local asymptotic exponentiality}
\author{Bas~Kleijn$^{1}$~and~Bartek Knapik$^{2}$\\[2mm]
  {\small\it {}$^{1}$\, Korteweg-de~Vries Institute for Mathematics,
    University of Amsterdam}\\
  {\small\it {}$^{2}$\, Department of Mathematics, VU University Amsterdam}
  }
\date{December 2013}
\maketitle

\begin{abstract}
  Consider semiparametric models that display \emph{local asymptotic
  exponentiality} (Ibragimov and Has'minskii (1981) \cite{Ibragimov81}),
  an asymptotic property of the likelihood associated with discontinuities
  of densities. Our interest goes to estimation of the location of
  such discontinuities while other aspects of the density form a nuisance
  parameter. It is shown that under certain conditions on model and prior, 
  the posterior distribution displays Bernstein--von~Mises-type asymptotic
  behaviour, with exponential distributions as the limiting sequence. 
  In contrast to regular settings, the maximum likelihood estimator is 
  inefficient under this form of irregularity. However, Bayesian point 
  estimators based on the limiting posterior distribution attain the minimax risk. 
  Therefore, the limiting behaviour of the posterior is used to advocate 
  efficiency of Bayesian point estimation rather than compare it to 
  frequentist estimation procedures based on the maximum likelihood estimator.
  Results are applied to semiparametric LAE location and scaling examples.\\
  
  \noindent
  \emph{Keywords:  Asymptotic posterior exponentiality; Posterior limit 
       distribution; Local asymptotic exponentiality; Semiparametric statistics; 
	   Irregular estimation; Bernstein--von Mises; Densities with jumps.}
\end{abstract}


\section{Introduction}
\label{sec:intro}

In recent years, asymptotic efficiency of Bayesian
semiparametric methods has enjoyed much attention. The
general question concerns a non-parametric model $\scrP$ in which
exclusive interest goes to the estimation of a sufficiently smooth,
finite-dimensional functional of interest. Asymptotically, regularity
of the estimator combined with the Cram\'er-Rao bound in the Gaussian
location model that forms the limit experiment \cite{LeCam72}
fixes the rate of convergence to $n^{-1/2}$ and
poses a bound to the accuracy of regular estimators expressed, \eg\
through Haj\'ek's convolution \cite{Hajek70} and asymptotic 
minimax theorems \cite{Hajek72}. In regular Bayesian context, efficiency
of estimation is best captured by a so-called
Bernstein--von~Mises limit (see, \eg\ Le~Cam and Yang (1990)
\cite{LeCam90a}). It should be noted here that efficiency of Bayesian 
estimation in regular models is closely related to asymptotic normality. 
Since the limit is Gaussian, hence symmetric 
and unimodal, the location of the limit, which is any best-regular 
estimator sequence, is directly linked to Bayesian point estimators 
for bowl-shaped loss functions.

Just like frequentist parametric theory for regular
estimates extends quite effortlessly to regular semi-parametric
problems, semi-parametric extensions of Bernstein--von~Mises-type
asymptotic behaviour of posteriors proceeds without essential problems.
Although far from developed fully, some general considerations of
Bayesian semiparametric efficiency are found in
\cite{Bickel12,Boucheron09,Cheng08,Rivoirard12,Shen02} (model- and/or
prior-specific derivations of the Bernstein--von~Mises limit are
many, \eg\ \cite{DeBlasi09,Castillo12,Castillo12b,Johnstone10,deJonge13,
Knapik11,Kruijer12,Leahu11} (of which most are formulated in (the conjugacy
class of) Gaussian white-noise with Gaussian priors)). Limits of
posteriors on sieves are considered in Ghosal (1999, 2000)
\cite{Ghosal99,Ghosal00b} and Bontemps (2011) \cite{Bontemps11}.
Kim and Lee (2004) \cite{Kim04}, Kim (2006, 2009) \cite{Kim06,Kim09}
and, more recently, Castillo and Nickl (2013) \cite{Castillo13} even
consider infinite-dimensional limiting posteriors (notwithstanding the
objections raised in Freedman (1999) \cite{Freedman99}).

However, not all estimators are regular. The quintessential example
calls for estimation of a point of discontinuity of a density: to be
a bit more specific, consider an almost-everywhere differentiable
Lebesgue density on $\RR$ that displays a jump at some point $\tht\in\RR$;
estimators for $\tht$ exist that converge at rate $n^{-1}$ with
exponential limit distributions \cite{Ibragimov81}. To illustrate the
form that this conclusion takes in Bayesian context, consider the
following example. For $\tht\in\RR$, let $F_\tht(x)=(1-e^{-\lambda(x-\tht)})\vee0$, 
where $\lambda > 0$ is fixed and known. Let $X_1, X_2, \ldots$ form an 
\iid\ sample from $F_{\tht_0}$, for some $\tht_0$. It is easy to see that 
the maximum likelihood estimator $\hat\tht_n$ is equal to the minimum 
of the sample $X_{(1)}$. Moreover, $n(X_{(1)}-\tht_0)$ is exponentially 
distributed with rate $\lambda$ for every $n \geq 1$. Therefore, the 
maximum likelihood estimator is consistent and asymptotically 
unbiased with the bias equal to $1/(n\lambda)$. However, 
\begin{equation}
\label{eq:Opt}
	P_0\bigl(n(X_{(1)}-\tht_0)\bigr)^2 = \frac{2}{\lambda^2}, \qquad 
    P_0\Bigl(n\Bigl(X_{(1)}-\frac{1}{n\lambda}-\tht_0\Bigr)\Bigr)^2 = \frac{1}{\lambda^2},
\end{equation}
where $P_0f$ denotes the expectation of a random variable $f$ under $\tht_0$. 
Therefore, the maximum likelihood estimator is inefficient. 

On the other hand consider the following theorem.  
\begin{theorem}
\label{thm:ExpBvM}
Assume that $X_1,X_2,\ldots$ form an \iid\ sample from $F_{\tht_0}$, for some
$\tht_0$. Let $\pi:\RR\rightarrow(0,\infty)$ be a continuous Lebesgue
probability density. Then the associated posterior distribution
satisfies,
\[
  \sup_A \Bigl|\,\Pi_n\bigl(\,\tht \in A \bigm| X_1, \ldots, X_n \,\bigr)
    - \NExp_{X_{(1)},n\lambda}(A)\,\Bigr| \convprob{\tht_0}  0,
\]
where $\NExp_{X_{(1)},n\lambda}$ is a negative exponential distribution with 
rate $n\lambda$ supported on $(-\infty, X_{(1)}]$.
\end{theorem}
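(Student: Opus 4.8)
The plan is to work directly with the explicit form of the likelihood in this parametric sub-model, since $F_{\tht_0}$ is a fully specified shifted-exponential family. Write $p_\tht(x)=\lambda e^{-\lambda(x-\tht)}\1_{[\tht,\infty)}(x)$, so that the likelihood based on $X_1,\dots,X_n$ is
\[
  \prod_{i=1}^n p_\tht(X_i)
  = \lambda^n \exp\Bigl(-\lambda\sum_{i=1}^n(X_i-\tht)\Bigr)\,\1_{\{\tht\le X_{(1)}\}}
  = \lambda^n e^{-\lambda n\bar X}\, e^{\lambda n\tht}\,\1_{\{\tht\le X_{(1)}\}}.
\]
First I would localize: introduce $h=n(\tht-X_{(1)})$, so the posterior for $h$ has density proportional to $\pi\bigl(X_{(1)}+h/n\bigr)\,e^{\lambda h}\,\1_{\{h\le0\}}$ after cancelling the $\tht$-free factors $\lambda^n e^{-\lambda n\bar X}$ and rescaling. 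The target law $\NExp_{X_{(1)},n\lambda}$ corresponds, in the $h$-coordinate, to the density $\lambda e^{\lambda h}\1_{\{h\le 0\}}$, i.e. a standard negative exponential with rate $\lambda$, independent of $n$. So the theorem reduces to showing that the posterior density of $h$, namely
\[
  \pi_n(h)=\frac{\pi\bigl(X_{(1)}+h/n\bigr)e^{\lambda h}\1_{\{h\le0\}}}
                 {\int_{-\infty}^0 \pi\bigl(X_{(1)}+g/n\bigr)e^{\lambda g}\,dg},
\]
converges to $\lambda e^{\lambda h}\1_{\{h\le0\}}$ in total variation, in $P_{\tht_0}$-probability; total variation of the rescaled laws equals total variation of the original laws, so this suffices.

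The key analytic input is Scheffé's lemma together with continuity and positivity of $\pi$. First observe that $X_{(1)}\to\tht_0$ almost surely (indeed $n(X_{(1)}-\tht_0)$ is ${\rm Exp}(\lambda)$ for every $n$, hence $X_{(1)}-\tht_0=O_{P_{\tht_0}}(1/n)$). Consequently, for each fixed $h\le 0$, $X_{(1)}+h/n\to\tht_0$, and by continuity of $\pi$, $\pi(X_{(1)}+h/n)\to\pi(\tht_0)>0$ in probability. Hence the integrand $\pi(X_{(1)}+h/n)e^{\lambda h}\1_{\{h\le0\}}$ converges pointwise (in probability) to $\pi(\tht_0)\,e^{\lambda h}\1_{\{h\le0\}}$. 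For the normalizing constant, I would split $\int_{-\infty}^0 = \int_{-M}^0 + \int_{-\infty}^{-M}$: on the compact piece, uniform continuity of $\pi$ near $\tht_0$ gives $\int_{-M}^0\pi(X_{(1)}+g/n)e^{\lambda g}dg \to \pi(\tht_0)\int_{-M}^0 e^{\lambda g}dg$; the tail piece is controlled because $\int_{-\infty}^{-M}e^{\lambda g}dg=e^{-\lambda M}/\lambda$ is small and $\pi(X_{(1)}+g/n)\le \sup\pi$ is not available in general — this is the one genuine subtlety — so instead I would bound the tail using only that $\int\pi\bigl(X_{(1)}+g/n\bigr)\,dg = n\int_{-\infty}^{X_{(1)}}\pi(u)\,du \le n$ combined with the factor $e^{\lambda g}\le e^{-\lambda M}$ on $g\le -M$, giving a bound $n e^{-\lambda M}$ which is \emph{not} uniformly small — so the honest fix is to rescale the tail comparison differently.

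The clean way around that obstacle is to compare $\pi_n$ directly with $q(h):=\lambda e^{\lambda h}\1_{\{h\le 0\}}$ via the total-variation identity $\|\pi_n-q\|_{TV}=\int (q-\pi_n)_+\,dh \le \int_{-M}^0 |\pi_n-q| + \int_{-\infty}^{-M} q$, and note $\int_{-\infty}^{-M}q = e^{-\lambda M}$, which \emph{is} small. On $[-M,0]$, write $\pi_n(h)=q(h)\cdot\frac{\pi(X_{(1)}+h/n)/\pi(\tht_0)}{Z_n}$ where $Z_n=\int_{-\infty}^0 \bigl(\pi(X_{(1)}+g/n)/\pi(\tht_0)\bigr)\lambda e^{\lambda g}dg$; by the dominated-convergence argument above (now the integrand against the \emph{probability} measure $\lambda e^{\lambda g}dg$ is dominated once we restrict to $g\ge -M$ and separately bound the $g<-M$ contribution by $e^{-\lambda M}\sup_{|u-\tht_0|\le M/n}\pi(u)/\pi(\tht_0)\to e^{-\lambda M}$) we get $Z_n\to 1$ in probability and $\sup_{h\in[-M,0]}|\pi(X_{(1)}+h/n)/\pi(\tht_0)-1|\to 0$ in probability. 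Therefore $\sup_{h\in[-M,0]}|\pi_n(h)-q(h)|\to 0$ in probability, so $\int_{-M}^0|\pi_n-q|\to0$ in probability; choosing $M$ large first, then $n$ large, makes $\|\pi_n-q\|_{TV}$ arbitrarily small with probability tending to one. Undoing the rescaling, $\sup_A|\Pi_n(\tht\in A\mid X_1,\dots,X_n)-\NExp_{X_{(1)},n\lambda}(A)|\to 0$ in $P_{\tht_0}$-probability, which is the claim. The main obstacle, as indicated, is handling the left tail $h\to-\infty$ of the normalizing integral without a global bound on $\pi$; it is resolved by exploiting the exponential weight $e^{\lambda h}$ and the fact that $\pi$ integrates to one, so only the local behaviour of $\pi$ at $\tht_0$ matters.
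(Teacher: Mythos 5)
Your overall strategy --- localize around $X_{(1)}$, compute the posterior density of $h = n(\theta - X_{(1)})$ explicitly, and control the total variation distance to $q(h)=\lambda e^{\lambda h}\1_{\{h\le 0\}}$ by splitting at $-M$ and using the $e^{-\lambda M}$ tail of $q$ --- is sound, and you correctly single out the one subtlety: the normalizing constant
\[
  Z_n = \int_{-\infty}^0 \frac{\pi(X_{(1)}+g/n)}{\pi(\theta_0)}\,\lambda e^{\lambda g}\,dg
\]
cannot be handled with a global bound on $\pi$. However, the fix you then propose for the $g<-M$ contribution to $Z_n$ is wrong: you bound it by $e^{-\lambda M}\sup_{|u-\theta_0|\le M/n}\pi(u)/\pi(\theta_0)$, but for $g<-M$ the argument $u = X_{(1)}+g/n$ ranges over all of $(-\infty,\,X_{(1)}-M/n]$, which is \emph{not} contained in $\{|u-\theta_0|\le M/n\}$. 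So the supremum is taken over the wrong set, and you are back to the crude bound $n e^{-\lambda M}$ that you yourself pointed out a few lines earlier is useless.

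The repair is to split the tail of $Z_n$ at a fixed $\delta$ in the $u$-variable rather than a fixed $M$ in $h$. After $u=X_{(1)}+g/n$,
\[
  Z_n = \frac{n\lambda}{\pi(\theta_0)}\int_{-\infty}^{X_{(1)}}\pi(u)\,e^{n\lambda(u-X_{(1)})}\,du.
\]
The far piece $u< X_{(1)}-\delta$ contributes at most $\frac{n\lambda}{\pi(\theta_0)}e^{-n\lambda\delta}$, which tends to zero for each fixed $\delta$ --- the exponential $e^{-n\lambda\delta}$ now beats the factor $n$, which is precisely what fails when the cut is at fixed $M$. On $[X_{(1)}-\delta,\,X_{(1)}]$, which lies in a fixed $2\delta$-neighbourhood of $\theta_0$ with probability tending to one, continuity and positivity of $\pi$ at $\theta_0$ give $\pi(u)=\pi(\theta_0)(1+O(\epsilon(\delta)))$ uniformly, with $\epsilon(\delta)\to 0$ as $\delta\downarrow 0$; that piece therefore equals $(1+O(\epsilon(\delta)))(1-e^{-n\lambda\delta})$. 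Letting $n\to\infty$ and then $\delta\downarrow 0$ yields $Z_n\to 1$ in probability, after which your argument on $[-M,0]$ goes through and the theorem follows. The paper itself gives no explicit proof of Theorem \ref{thm:ExpBvM} but defers to the conditioning-on-compacta machinery of Theorem \ref{thm:PostLAE}, where the analogous tail control is absorbed into the marginal-consistency hypothesis; your direct density computation is exactly the ``elementary'' route the authors allude to, once the tail estimate is corrected as above.
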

The proof of this Bernstein--von~Mises limit is elementary and does
not depend in any crucial way on the particular parametric family of
distributions that we chose (\cf\ also the proof of Theorem~\ref{thm:PostLAE}). 

Consider now the mean $\wt\tht_n = X_{(1)} - 1/(n\lambda)$ of the limiting 
exponential distribution. As seen in (\ref{eq:Opt}) its squared risk is smaller 
than the risk of the maximum likelihood estimator. As a matter of fact, $1/\lambda^2$ 
is the lower bound for the (localized) risk in the exponential experiment. 
This suggests that Bayesian point estimators based on the posterior distribution 
for a wide class of loss functions will be asymptotically minimax.

As a frequentist semi-parametric problem, estimation of a support
boundary point is a well-understood problem (see Ibragimov and
Has'minskii (1981) \cite{Ibragimov81}): assuming that the distribution
$P_{\tht}$ of $X$ is supported on the half-line $[\tht,\infty)$ and an
\iid\ sample $X_1, X_2, \ldots, X_n$ is given, we follow \cite{Ibragimov81}
and estimate $\tht$ with the first order statistic $X_{(1)}=\min_i\{X_i\}$.
If $P_\tht$ has an absolutely continuous Lebesgue density of the form
$p_{\tht}(x)=\eta(x-\tht)\,1\{x\geq\tht\}$, its rate of convergence is
determined by the behaviour of the quantity $\ep\mapsto\int_0^\ep \eta(x)\,dx$
for small values of $\ep$. If $\eta(x) > 0$ for $x$ in a right 
neighbourhood of $0$, then,
\begin{equation*}
  \label{eq:rates}
  n\bigl(\, X_{(1)}-\tht\, \bigr)=O_{P_\tht}(1).
\end{equation*}
For densities of this form, for any sequence $\tht_n$ that converges
to $\tht$ at rate $n^{-1}$, Hellinger distances obey (see
Theorem~VI.1.1 in \cite{Ibragimov81}):
\begin{equation}
  \label{eq:HellingerOpt}
  n^{1/2}\,H(P_{\tht_n},P_{\tht}) = O(1).
\end{equation}
If we substitute the estimators $\tht_n=\hat{\tht}_n(X_1,\ldots,X_n)=X_{(1)}$,
uniform tightness of the sequence in the above display signifies rate
optimality of the estimator (\cf\ Le~Cam (1973, 1986)
\cite{LeCam73,LeCam86}). Regarding asymptotic efficiency beyond
rate-optimality, \eg\ in the sense of minimal asymptotic variance
(or other measures of dispersion of the limit distribution), 
we have already noticed in (\ref{eq:Opt}), in a specific parametric example of 
shifted exponential distributions, that the (one-sided) limit 
distributions one obtains for $X_{(1)}$ can always be improved 
upon by de-biasing (see Section~VI.6, examples~1--3 in \cite{Ibragimov81} 
and Le~Cam (1990) \cite{LeCam90b}).

As a semi-parametric Bayesian question, the matter of estimating
support boundaries is not settled by the above: for the posterior,
it is the local limiting behaviour of the likelihood around the point
of convergence (see, \eg, Theorems~VI.2.1--VI.2.3 in \cite{Ibragimov81})
that determines convergence rather than the behaviour of any
particular statistic. 
The goal of this paper is to shed some light on the
behaviour of marginal posteriors for the parameter of interest
in semi-parametric, irregular estimation problems, through a study
of the Bernstein--von~Mises phenomenon. Only the prototypical
case of a density of bounded variation, supported on the half-line
$[\tht,\infty)$ or on the interval $[0, \tht]$, with a jump at $\tht$, 
is analysed in detail. 
We offer a slight abstraction from the prototypical case, by considering 
the class of models that exhibit a weakly converging expansion of the 
likelihood called \emph{local asymptotic exponentiality (LAE)} 
\cite{Ibragimov81}, to be compared with local asymptotic normality 
\cite{LeCam53} in regular problems. Like in the parametric case of 
Theorem~\ref{thm:ExpBvM}, this type of asymptotic behaviour of the likelihood 
is expected to give rise to a (negative-)exponential marginal posterior 
satisfying the irregular Bernstein--von~Mises limit:
\begin{equation}
  \label{eq:assertion}
  \sup_A \Bigl|\,\Pi_n\bigl(\,h \in A \,\bigm|\, X_1, \ldots, X_n \,\bigr)
    - \NExp_{\Delta_n,\gamma_{\tht_0,\eta_0}}(A)\,\Bigr| \convprob{P_0}  0,
\end{equation}
where $h=n(\tht-\tht_0)$ and the random sequence $\Delta_n$ converges
weakly to exponentiality (see Definition~\ref{df:LAE}). Like argued already 
in the parametric case, the limit (\ref{eq:assertion}) allows for the 
asymptotic identification of Bayesian point estimators based on the 
posterior distribution with the point estimators based on the limiting 
exponential distribution. 
The constant $1/\gamma_{\tht_0,\eta_0}$ determines the scale in the 
limiting exponential distribution and, as such, is related to the asymptotic bound 
for estimators of $\tht$ (for the quadratic loss the bound is exactly given 
by the scale). In this paper, we explore general
sufficient conditions on model and prior to conclude that the limit
(\ref{eq:assertion}) obtains. 

The main theorem is applied in two semi-parametric LAE example models,
one for a shift parameter and one for a scale parameter (compare
with the two regular semiparametric questions in Stein (1956)
\cite{Stein56}). The former one is an extension of the setting considered 
in Theorem~\ref{thm:ExpBvM}, and is closely related to regression problems 
with one-sided errors, often arising in economics.
The later includes a problem of estimation 
of the scale parameter in the family of uniform distributions $[0, \lambda]$, 
($\lambda > 0$). 

\whitespace

The paper is structured as follows: in Section~\ref{sec:Examples} we first 
introduce the notion of local asymptotic exponentiality and then present
two semiparametric LAE models satisfying the exponential Bernstein--von~Mises 
property (\ref{eq:assertion}) asymptotically. In Section~\ref{sec:main} we 
give the main theorem and a corollary
that simplifies the formulation. In Section~\ref{sec:ape}, the proof
of the main theorem is built up in several steps, from a particular 
type of posterior convergence, to an LAE expansion for
integrated likelihoods and on to posterior exponentiality of the type
described by (\ref{eq:assertion}). Section~\ref{sec:proofs} contains 
the proofs of auxiliary results needed in the proof of the main theorem, as well as 
verification of the conditions of the simplified corollary for 
the two models presented in Section~\ref{sec:Examples}. 

\subsection*{Notation and conventions}
The (frequentist) true distribution of each of the data points
in the \iid\ sample $\sample_n=(X_1,\ldots,X_n)$ is denoted $P_0$
and assumed to lie in the model $\scrP$. Associated order statistics are
denoted $X_{(1)},X_{(2)},\ldots$. The location-scale family
associated with the exponential distribution is denoted $\Exp_{\Delta,\lambda}$
and its negative version by $\NExp_{\Delta,\lambda}$. We localise $\tht$
by introducing $h = n(\tht-\tht_0)$ with inverse $\tht_n(h)=\tht_0+n^{-1}h$.
The expectation of a random variable $f$ with respect to a probability
measure $P$ is denoted $Pf$; the sample average of $g(X)$ is denoted
$\PP_ng(X) = (1/n)\sum_{i=1}^ng(X_i)$ and 
$\GG_ng(X) = n^{1/2}(\PP_ng(X)-Pg(X))$. If $h_n$ is stochastic,
$P_{\tht_n(h_n),\eta}^nf$ denotes the integral
$\int f(\omega)\,(dP_{\tht_n(h_n(\omega)),\eta}^n/dP_0^n(\omega))
(\omega)\,dP_0^n(\omega)$. The Hellinger distance between $P$ and
$P'$ is denoted $H(P,P')$ and induces a metric $d_H$ on the space
of nuisance parameters $H$ by $d_H(\eta,\eta')=
H(P_{\tht_0,\eta},P_{\tht_0,\eta'})$, for all $\eta,\eta'\in H$.
A prior on (a subset $\Tht$ of) $\RR^k$ is said to be
\emph{thick} (at $\tht\in\Tht$) if it is Lebesgue absolutely continuous
with a density that is continuous and strictly positive (at $\tht$).


\section{Local asymptotic exponentiality and estimation of support boundary points}
\label{sec:Examples}

Throughout this paper we consider estimation of a functional
$\theta: \scrP \to \RR$ on a nonparametric model $\scrP$ based
on a sample $X_1,X_2,\ldots$, distributed \iid\ according to some
unknown $P_0\in\scrP$. We assume that $\scrP$ is parametrized in
terms of a one-dimensional parameter of interest $\tht\in\Tht$ and
a nuisance parameter $\eta\in H$ so that we can write
$\scrP=\{P_{\theta,\eta}:\tht\in\Tht,\eta\in H\}$, and that
$\scrP$ is dominated by a $\sigma$-finite measure on the sample space
with densities $p_{\tht,\eta}$. 
The set $\Theta$ is open in $\RR$, and $(H,d_H)$ is an
infinite dimensional metric space (to be specified further at later
stages). Assuming identifiability, there exist unique
$(\theta_0,\eta_0)\in\Tht\times H$ such that $P_0=P_{\theta_0,\eta_0}$.
Assuming measurability of the map $(\theta,\eta) \mapsto P_{\theta,\eta}$
and priors $\Pi_\Tht$ on $\Tht$ and $\Pi_H$ on $H$, the prior $\Pi$
on $\scrP$ is defined as the product prior $\Pi_\Tht\times\Pi_H$ on
$\Tht\times H$, lifted to $\scrP$. The subsequent sequence of posteriors
\cite{Ghosal00a} takes the form,
\begin{equation}
  \label{eq:posterior}
  \Pi_n\bigl( A \vert X_1, \ldots, X_n\bigr)
  = \int_A \prod_{i=1}^n p(X_i)\, d\Pi(P) \Bigg/
    \int_\scrP \prod_{i=1}^n p(X_i)\, d\Pi(P),
\end{equation}
where $A$ is any measurable model subset.

Throughout most of this paper, the parameter of interest $\tht$ is
represented in localised form, by centering on $\theta_0$ and
rescaling: $h = n(\theta-\theta_0)\in\RR$. (We also make use of
the inverse $\theta_n(h) = \theta_0 + n^{-1}h$.) The following
(irregular) local expansion of the likelihood is due to Ibragimov and
Has'minskii (1981) \cite{Ibragimov81}.
\begin{definition}[Local asymptotic exponentiality]
\label{df:LAE}
A one-dimensional parametric model $\tht\mapsto P_{\tht}$ is said to
be \emph{locally asymptotically exponential} (LAE) at $\tht_0\in\Tht$
if there exists a sequence of random variables $(\Delta_n)$ and
a positive constant $\gamma_{\tht_0}$ such that for all $(h_n)$,
$h_n\rightarrow h$,
\[
  \prod_{i=1}^n \frac{p_{\tht_0 + n^{-1}h_n}}{p_{\tht_0}}(X_i)
  = \exp(h\gamma_{\tht_0} + o_{P_{\tht_0}}(1))\,\1_{\{h\leq \Delta_n\}},
\]
with $\Delta_n$ converging weakly to $\Exp_{0,\gamma_{\tht_0}}$.
\end{definition}

In many examples, \eg\ that of Subsection~\ref{sub:semishift},
$\Delta_n$ and its weak limit are independent of $\tht_0$.
This definition should be viewed as an irregular variation on
the one-dimensional version of Le~Cam's local asymptotic normality (LAN)
\cite{LeCam53}, which forms the smoothness requirement in the context
of the Bernstein--von~Mises theorem (see, \eg\ van der Vaart (1998) \cite{vdVaart98}). 
Therefore, an LAE expansion is expected to give rise to a one-sided,
exponential marginal posterior limit, \cf\ (\ref{eq:assertion}). 

In the main result of the paper we use a slightly stronger version of 
local asymptotic exponentiality. We say that the model is {\em stochastically}
LAE if the LAE property holds for every random sequence $(h_n)$ 
that is bounded in probability. Therefore, $h$ in the expansion is also 
replaced with $h_n$. 
\whitespace

We now turn to two examples of support boundary estimation
for which the likelihood displays an LAE expansion. In
Subsection~\ref{sub:semishift} the parameter of interest is a shift
parameter, while in Subsection~\ref{sub:semiscale} we consider a
semiparametric scaling family.

\subsection{Semiparametric shifts}
\label{sub:semishift}

The so-called \emph{location problem} is one of the classical problems
in statistical inference: let $X_1, X_2, \ldots$ be \iid\ real-valued
random variables, each with marginal $F_\mu:\RR\rightarrow[0,1]$, where
$\mu\in\RR$ is the \emph{location}, \ie\ the distribution function $F_\mu$
is some fixed distribution $F$ shifted over $\mu$: $F_\mu(x)=F(x-\mu)$.

Depending on the nature of $F$, the corresponding location estimation
problem can take various forms: for instance, in case $F$ possesses a
density $f:\RR\rightarrow[0,\infty)$ that is symmetric around $0$ (and
satisfies the regularity condition $\int (f'/f)^2(x)\,dF(x)<\infty$),
the location $\mu$ is estimated at rate $n^{-1/2}$ (equally well whether
we know $f$ or not \cite{Stein56}). If $F$ has a support that is contained
in a half-line in $\RR$ (\ie\ if there is a domain boundary), the problem
of estimating the location might become easier, as noticed in the 
example given in the introduction.

The problem of estimating the boundary of a distribution has important 
practical motivations, arising in certain auction models, search models, 
production frontier models, as well as truncated- or censored-regression models. 
For instance, assume the data $(X_1, Y_1), (X_2, Y_2), \ldots$ are generated by the 
model
\[
	Y_i = f(X_i)+\mu + \ep_i, 
\]
where $f$ denotes a smooth function satisfying $f(0)=0$, and for 
simplicity both $X_i$ and $Y_i$ are scalars. Moreover, we suppose that the 
density of the error $\ep_i$, conditional on $X_i = x$, is supported 
on $[0, \infty)$. Therefore, the quantities $f$ and  $\mu$ represent a boundary, 
and we are interested in a certain aspect of it, namely $\mu$ itself.  
For more details and more examples we refer the reader to 
Hirano and Porter (2003) \cite{Hirano03}, 
Chernozhukov and Hong (2004) \cite{Chernozhukov04}, 
Hall and van Keilegom (2009) \cite{Hall09}. 

In this subsection we consider a model of densities with a 
discontinuity at $\mu$: we assume that
$p(x) = 0$ for $x<\mu$ and $p(\mu)>0$ while $p:\RR\to[0,\infty)$ is
continuous at all $x\geq\mu$. Observed is an \iid\ sample $X_1, X_2,
\ldots$ with marginal $P_0$. The distribution $P_0$ is assumed to have 
a density of above form, \ie\ with unknown location $\tht$ for a
nuisance density $\eta$ in some space $H$. Model distributions
$P_{\tht,\eta}$ are then described by densities,
\[
  p_{\tht,\eta}:[\theta,\infty)\rightarrow[0,\infty): x\mapsto\eta(x-\tht),
\]
for $\eta\in H$ and $\tht\in\Tht\subset\RR$. As for the family $H$ of
nuisance densities, our interest does not lie in modelling of the
tail, we concentrate on specifying the behaviour at the
discontinuity.
For that reason (and in order to connect with
Theorem~\ref{thm:LAESBvM}), we impose some conditions on the nuisance
space $H$: assume that
$\eta:[0,\infty)\rightarrow[0,\infty)$ is differentiable and that
$\score(t) = \eta'(t)/\eta(t)+\alpha$ is a bounded
continuous function with a limit at infinity. For given $S>0$, let
$\scrL$ denote the ball of radius $S$ in the space
$(C[0,\infty],\|\cdot\|_{\infty})$ of continuous functions
from the extended half-line to $\RR$ with uniform norm.
An Esscher transform of the form 
\begin{equation}
  \label{eq:Esscher}
  \eta_\score(x)
    =\frac{e^{-\alpha\,x+\int_0^x\score(t)\,dt}}
   {\int_0^\infty e^{-\alpha\,y+\int_0^y\score(t)\,dt}\,dy},
\end{equation}
for $x\geq0$, maps $\scrL$ to the space $H$ which we choose to
model the nuisance.

Properties of this mapping (\cf\ Lemma~\ref{lem:Esscher}) 
guarantee that $H$ consists of functions of bounded variation, 
hence Theorem~V.2.2 in Ibragimov and Has'minskii (1981) \cite{Ibragimov81}
confirms that the model exhibits local asymptotic exponentiality
in the $\theta$-direction for every fixed $\eta$. In the notation of
Definition~\ref{df:LAE}, $\gamma_{\tht_0,\eta} = \eta(0)$, \ie\
the size of the discontinuity at zero. Since it is not difficult to
find a prior on a space of bounded continuous functions (see, \eg\
Lemma~\ref{lem:Prior} below), (Borel) measurability of the Esscher
transform as a map between $\scrL$ and $H$ enables a push-forward
prior on $H$.
\begin{theorem}
\label{thm:SBvM-Example}
Let $X_1, X_2, \ldots$ be an \iid\ sample from the location model
introduced above with $P_0 = P_{\theta_0,\eta_0}$ for some
$\theta_0 \in \Theta$, $\eta_0\in H$. Endow $\Theta$ with a prior that
is thick at $\theta_0$ and $\scrL$ with a prior $\Pi_\scrL$ such
that $\scrL\subset\supp(\Pi_\scrL)$. Then the marginal posterior
for $\theta$ satisfies,
\begin{equation}
  \label{eq:Assertion}
  \sup_A\Bigl|\Pi(n(\theta-\theta_0)\in A\, |\, X_1, \ldots, X_n)
    - \NExp_{\Delta_n,\gamma_{\tht_0,\eta_0}}(A)\Bigr|\convprob{P_0} 0,
\end{equation}
where $\Delta_n$ is exponentially distributed with rate
$\gamma_{\tht_0,\eta_0} = \eta_0(0)$.
\end{theorem}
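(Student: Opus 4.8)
The plan is to deduce Theorem~\ref{thm:SBvM-Example} as a direct application of the main semiparametric result (the as-yet-unnumbered Theorem~\ref{thm:LAESBvM} / Theorem~\ref{thm:PostLAE} referenced in the text), so the work consists entirely of verifying that its hypotheses hold for the location model at hand. First I would record the structural facts about the nuisance space: by Lemma~\ref{lem:Esscher} the Esscher transform \eqref{eq:Esscher} maps the norm ball $\scrL\subset(C[0,\infty],\|\cdot\|_\infty)$ into a set $H$ of probability densities of uniformly bounded variation, with $\eta_\score(0)$ depending continuously on $\score$ and bounded away from $0$ and $\infty$ on $\scrL$. Combined with Theorem~V.2.2 of \cite{Ibragimov81}, this gives, for each fixed $\eta\in H$, the (stochastic) LAE expansion of $\tht\mapsto P_{\tht,\eta}$ at $\tht_0$ in the sense of Definition~\ref{df:LAE}, with $\gamma_{\tht_0,\eta}=\eta(0)$ and a sequence $\Delta_n$ that (as noted after the definition, and as is visible here since the density is just shifted) does not depend on $\tht_0$ and converges weakly to $\Exp_{0,\gamma_{\tht_0,\eta_0}}$. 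This identifies the limiting scale in \eqref{eq:Assertion} as $\eta_0(0)$.

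Next I would check the prior conditions. Thickness of $\Pi_\Tht$ at $\tht_0$ is assumed directly. For the nuisance, the hypothesis $\scrL\subset\supp(\Pi_\scrL)$ together with continuity of the Esscher map (Lemma~\ref{lem:Esscher}) and of $\eta\mapsto d_H(\eta,\eta_0)$ transfers to a Kullback--Leibler / Hellinger support condition on $\Pi_H$ around $\eta_0$ on $H$: every $d_H$-neighbourhood of $\eta_0$ pulls back to a $\|\cdot\|_\infty$-neighbourhood of $\score_0$ of positive $\Pi_\scrL$-mass. This supplies the prior-mass (consistency-type) hypothesis of the main theorem. The remaining analytic inputs — existence of appropriate tests / entropy control on $H$, and the local Lipschitz or stochastic-equicontinuity bound on $\eta\mapsto$ (integrated likelihood) that the main theorem demands — follow from uniform bounded variation of $H$ and boundedness of $\score$ on $\scrL$: the Hellinger bracketing entropy of the relevant sieves is finite (cf.\ \eqref{eq:HellingerOpt} and the bounded-variation estimates in \cite{Ibragimov81}), and the score bound $\|\score\|_\infty\le S$ with the limit-at-infinity assumption controls the modulus needed. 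I would invoke the auxiliary lemmas of Section~\ref{sec:proofs} (in particular Lemma~\ref{lem:Esscher} and Lemma~\ref{lem:Prior}) to discharge these, exactly as the paper announces it will in Section~\ref{sec:proofs}.

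With all hypotheses of the main theorem verified, its conclusion is precisely \eqref{eq:assertion} for $h=n(\tht-\tht_0)$ with $\gamma_{\tht_0,\eta_0}=\eta_0(0)$, which is \eqref{eq:Assertion}; this completes the proof. The step I expect to be the genuine obstacle is the stochastic-equicontinuity / integrated-likelihood expansion hypothesis: one must show that integrating the LAE likelihood ratio against the nuisance posterior preserves the exponential expansion, i.e.\ that the $o_{P_0}(1)$ remainder in Definition~\ref{df:LAE} remains $o_{P_0}(1)$ after the nuisance integration and that the indicator $\1_{\{h\le\Delta_n\}}$ survives. This requires the uniformity of the LAE expansion over $\eta$ in shrinking $d_H$-neighbourhoods of $\eta_0$, which is where the uniform bounded-variation structure of $H$ (via the Esscher parametrization) and the compactness/entropy properties of $\scrL$ are essential; everything else is bookkeeping against the general theorem.
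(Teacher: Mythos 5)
Your overall strategy is the same as the paper's: Theorem~\ref{thm:SBvM-Example} is obtained by checking the hypotheses of the general result, and the paper does this by verifying the conditions of Corollary~\ref{cor:RateFree}. You correctly identify the role of Lemma~\ref{lem:Esscher} (bounded variation, log-Lipschitz bound $\alpha+S$), the LAE expansion from Theorem~V.2.2 in \cite{Ibragimov81} with $\gamma_{\tht_0,\eta}=\eta(0)$, the prior support transfer from $\scrL$ to $H$, and the need for entropy and Hellinger estimates. These match Lemmas~\ref{lem:Entropy}, \ref{lem:SupInKL}, \ref{lem:Hellinger} and the use of Lemma~\ref{lem:DomLemma}.

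However, there is a genuine gap: your outline never addresses condition \emph{(vi)} of Corollary~\ref{cor:RateFree} — marginal posterior consistency at rate $n^{-1}$, i.e.\ $\Pi_n(|h|\leq M_n\mid\sample_n)\convprob{P_0}1$ for every $M_n\to\infty$. The paper explicitly singles this out as ``the hardest to verify in applications,'' and discharges it via Lemma~\ref{lem:MarginalLR} (a generic reduction to a uniform bound on $\sup_{\eta\in H}\sup_{\tht\in\Tht_n^c}\PP_n\log(p_{\tht,\eta}/p_{\tht_0,\eta})$) together with Lemma~\ref{lem:LR}, which establishes that bound for the shift model using monotonicity of $\theta\mapsto p_{\theta,\eta}(x)$, the exponential tail bound on $X_{(1)}-\tht_0$ (via Lemma~\ref{lem:IntBounds}), and the $\eta'\leq(S-\alpha)\eta$ derivative bound. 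This is the piece of the proof that is truly model-specific and cannot be skipped; without it the main theorem does not apply.

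Relatedly, the step you flag as ``the genuine obstacle'' — that integrating the LAE likelihood ratio against the nuisance posterior preserves the exponential expansion with its $o_{P_0}(1)$ remainder and the indicator $\1_{\{h\leq\Delta_n\}}$ — is actually a \emph{generic} consequence, namely Theorem~\ref{thm:S-ILAE}, which holds automatically once consistency under perturbation, domination and contiguity (conditions \emph{(i)}--\emph{(v)}) are in place. So you would not need to re-prove it in the example; what you would instead need to supply, and currently do not, is the marginal rate condition \emph{(vi)}.
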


Details of the proof of Theorem~\ref{thm:SBvM-Example} can 
be found in Subsection~\ref{sub:proofssemishift}.

\subsection{Semiparametric scaling}
\label{sub:semiscale}

Another important statistical problem is related to the \textit{scale} 
or \textit{dispersion} of the probability distribution: let 
$X_1, X_2, \ldots$ be \iid\ real-valued random variables, each with marginal 
$F_\lambda:\RR\rightarrow[0,1]$, where $\lambda\in(0,\infty)$ is the 
\emph{scale}, \ie\ the distribution function $F_\lambda$ is some fixed 
distribution $F$ scaled by $\lambda$: $F_\lambda(x)=F(x/\lambda)$.

Again, depending on the nature of $F$, the corresponding scale estimation
problem can take various forms: for instance, in case $F$ possesses a
density $f:\RR\rightarrow[0,\infty)$ with support $\RR$ that is 
absolutely continuous (and satisfies the regularity condition
$\int (1+x^2)(f'/f)^2(x)\,dF(x)<\infty$), the scale $\lambda$ is estimated
at rate $n^{-1/2}$ (equally well whether we know $f$ or not, as conjectured
in \cite{Stein56}, and studied later in \cite{Wolfowitz74} and
\cite{Park90}). If $F$ is supported on $[0,\infty)$ (or $(-\infty, 0])$, 
the problem can be reparametrized and viewed as a regular location problem.
When $F$ has a support that is a closed interval with one non-zero
endpoint (\ie\ only one point of the support varies with scale), the
problem of estimating the scale might become easier. Probably the best
known example of this type is estimation of the scale parameter in the
family of the uniform distributions $[0,\lambda]$, $(\lambda>0)$.

In this subsection we consider an extension of this uniform example: 
we assume that $p(x) > 0$ for $x\in[0,\lambda]$ and $0$ otherwise while 
$p:[0,\lambda]\to[0,\infty)$ is continuous at all $x\in(0,\lambda)$. 
Observed is an \iid\ sample $X_1, X_2, \ldots$ with marginal $P_0$. 
The distribution $P_0$ is assumed to have a density of above form, \ie\ with 
unknown scale $\tht$ for a nuisance density $\eta$ in some space $H$. Model 
distributions $P_{\tht,\eta}$ are then described by densities,
\begin{equation}
  \label{eq:scaledens}
  p_{\tht,\eta}:[0,\theta]\rightarrow[0,\infty): 
    x\mapsto\frac{1}{\theta}\,\eta\Bigl(\frac{x}{\tht}\Bigr),
\end{equation}
for $\eta\in H$ and $\tht\in\Tht\subset(0,\infty)$. Fix $S > 0$ and assume 
that $\eta:[0,1]\rightarrow[0,\infty)$ is monotone increasing, differentiable 
and bounded, and that $\score(t) = \eta'(t)/\eta(t) - S$ is a bounded 
continuous function. For given $S>0$, let $\scrL$ denote the ball of 
radius $S$ in the normed space $(C[0,1],\|\cdot\|_{\infty})$ of continuous 
functions from the unit interval to $\RR$ with uniform norm. The following Esscher 
transform maps $\scrL$ to the space $H$ with which we choose to model the nuisance:
\begin{equation}
  \label{eq:Esscher2}
  \eta_\score(x)
    =\frac{e^{Sx+\int_0^x\score(t)\,dt}}
   {\int_0^1 e^{Sy+\int_0^y\score(t)\,dt}\,dy},
\end{equation}
for $x\in[0,1]$.

Theorem~V.2.2 in \cite{Ibragimov81} verifies local asymptotic 
exponentiality in the $\theta$-direction for every fixed $\eta$, although 
in its positive version. This does not pose problems in applying results
of this paper: we maintain the sign for $h$ and write
$\Delta_n = -\nabla_n$, where $\nabla_n = n(\tht_0-X_{(n)})$. 
In the notation of Definition~\ref{df:LAE}, $\gamma_{\tht_0,\eta}
= \eta(1)/\tht_0$, \ie\ the rate of the limiting exponential distribution
is the size of the discontinuity at the varying endpoint of the support. 
Again, we use a push-forward prior on $H$ based on a prior for $\scrL$.

As already noted, our scaling and location problems are both
LAE and the parametrizations and solutions we formulate are closely related.
However, the nuisance parametrizations are quite different and the
relation between the models is a
subtle one. Therefore the location theorem of the previous subsection
and the scaling theorem that follows are very similar in appearance,
but form the answers to quite distinct questions.
\begin{theorem}
\label{thm:SBvM-Example2}
Let $X_1, X_2, \ldots$ be an \iid\ sample from the scale model
introduced above with $P_0 = P_{\theta_0,\eta_0}$ for some
$\theta_0 \in \Theta$, $\eta_0\in H$. Endow $\Theta$ with a prior that
is thick at $\theta_0$, and $\scrL$ with a prior 
$\Pi_\scrL$ such that $\scrL\subset\supp(\Pi_\scrL)$. Then the marginal 
posterior for $\theta$ satisfies,
\begin{equation}
  \label{eq:Assertion2}
  \sup_A\Bigl|\Pi(n(\theta-\theta_0)\in A\, |\, X_1, \ldots, X_n)
    - \Exp_{-\nabla_n,\gamma_{\tht_0,\eta_0}}(A)\Bigr|\convprob{P_0} 0,
\end{equation}
where $\nabla_n$ is exponentially distributed with rate
$\gamma_{\tht_0,\eta_0} = \eta_0(1)/\theta_0$.
\end{theorem}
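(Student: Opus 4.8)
\textbf{Proof proposal for Theorem~\ref{thm:SBvM-Example2}.}

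The plan is to derive this result as a special case of the main theorem (which we are entitled to invoke as a stated result), by verifying its hypotheses for the scaling model, and by translating the positive-exponential version of the conclusion into the form stated here. The first step is to set up the localisation. Since we keep the sign convention $h = n(\tht-\tht_0)$ but the likelihood expands around the \emph{largest} order statistic $X_{(n)}$ rather than the smallest, I would introduce $\nabla_n = n(\tht_0 - X_{(n)})$ and note that the LAE expansion of Definition~\ref{df:LAE} holds in the form with $\Delta_n = -\nabla_n$ and rate $\gamma_{\tht_0,\eta} = \eta(1)/\tht_0$; this is exactly what Theorem~V.2.2 of \cite{Ibragimov81} gives once one checks that the density $x\mapsto \tht^{-1}\eta(x/\tht)$ has bounded variation on $[0,\tht]$ with a jump of size $\eta(1)/\tht$ at $x=\tht$. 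Boundedness, monotonicity and differentiability of $\eta$ (with $\score = \eta'/\eta - S$ bounded continuous) give bounded variation; the Esscher representation~(\ref{eq:Esscher2}) makes these properties manifest and shows $\eta(1)>0$, so $\gamma_{\tht_0,\eta_0}>0$. I would also record the elementary fact that $\nabla_n$ is tight and in fact converges weakly to $\Exp_{0,\gamma_{\tht_0,\eta_0}}$, by a direct computation from the tail of $P_{\tht_0,\eta_0}$ near the endpoint $\tht_0$ (this is the analogue of the exact exponentiality of $n(X_{(1)}-\tht_0)$ in Theorem~\ref{thm:ExpBvM}).

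The second step is to verify the nuisance-space conditions required by the main theorem. This means checking: (i) that the Esscher map $\score\mapsto\eta_\score$ from the ball $\scrL\subset(C[0,1],\|\cdot\|_\infty)$ into $H$ is well-defined, (Borel) measurable and has enough continuity/compactness so that the push-forward prior $\Pi_H$ concentrates suitably and so that a metric entropy / testing condition holds for the nuisance at a fixed $\tht$; (ii) that $d_H$-neighbourhoods of $\eta_0$ translate into the posterior rate-of-convergence condition needed to localise $\tht$ at scale $n^{-1}$; and (iii) the relevant stochastic LAE property, i.e. that the expansion in Definition~\ref{df:LAE} survives when $h$ is replaced by a sequence $h_n$ bounded in probability, uniformly enough over $\eta$ in small $d_H$-balls. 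For (i) and (ii) I would lean on the auxiliary lemmas promised in the excerpt (the Esscher lemma analogous to Lemma~\ref{lem:Esscher} and the prior-existence lemma analogous to Lemma~\ref{lem:Prior}); the parametrisation by a uniformly bounded ball of continuous functions is precisely what makes the entropy and prior-mass conditions routine. For (iii), the key point is that near the upper endpoint the log-likelihood ratio is, to leading order, a sum of indicator terms $\1_{\{X_i \le \tht_n(h_n)\}}$ plus a smooth contribution controlled by $\score$, and uniform control over $\score$ in $\scrL$ gives the required uniformity.

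The third step is to apply the main theorem (via its simplifying corollary) to obtain
\[
  \sup_A\Bigl|\Pi\bigl(n(\tht-\tht_0)\in A \bigm| X_1,\ldots,X_n\bigr)
    - \Exp_{-\nabla_n,\gamma_{\tht_0,\eta_0}}(A)\Bigr|\convprob{P_0} 0,
\]
which is already the assertion~(\ref{eq:Assertion2}); the only translation needed is the bookkeeping $\Delta_n = -\nabla_n$ together with the observation that a negative-exponential limit in the $\Delta$-parametrisation becomes a positive-exponential limit $\Exp_{-\nabla_n,\gamma_{\tht_0,\eta_0}}$ supported on $[-\nabla_n,\infty)$ in the $h$-parametrisation, reflecting that $\tht$ cannot be smaller than $X_{(n)}$.

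I expect the main obstacle to be step two, specifically the interplay between the $\tht$-dependence and the $\eta$-dependence of the model: unlike the location case, here the nuisance $\eta$ is evaluated at the rescaled argument $x/\tht$, so a perturbation of $\tht$ at scale $n^{-1}$ also perturbs the effective nuisance, and one must show this cross-effect does not spoil the stochastic LAE expansion nor the posterior rate for the nuisance. Handling this coupling — essentially showing that the ``least favourable'' direction is trivial here because the irregular $\tht$-score dominates — is the technical heart of the argument, and it is where the detailed computations (deferred to Subsection~\ref{sub:proofssemishift}'s scaling counterpart) will concentrate.
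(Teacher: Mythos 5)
Your plan matches the paper's proof essentially step for step: the paper verifies the hypotheses of Corollary~\ref{cor:RateFree} via Lemmas~\ref{lem:Esscher2}--\ref{lem:Prior2} (Esscher-map and log-Lipschitz properties yielding the domination and contiguity conditions through Lemma~\ref{lem:DomLemma}, finite Hellinger entropy, KL-neighbourhood inclusions, the $O(n^{-1/2})$ Hellinger bound, the marginal $n^{-1}$-rate condition via Lemma~\ref{lem:MarginalLR}, and the Brownian-path prior construction), and then applies the corollary after the bookkeeping $\Delta_n=-\nabla_n$ to pass from the negative-exponential to the positive-exponential form — exactly the route you outline. The coupling issue you flag (that perturbing $\tht$ also rescales the argument of $\eta$) is real and is absorbed in the paper by the explicit Lipschitz constant $m_{\tht_0,\eta}=(2+8S)/\tht_0$ and the computations in Lemma~\ref{lem:Hellinger2}, just as you anticipate.
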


Details of the proof of Theorem~\ref{thm:SBvM-Example2} can 
be found in Subsection~\ref{sub:proofssemiscale}.


\section{General results}
\label{sec:main}

In order to establish the limit (\ref{eq:assertion}) (also (\ref{eq:Assertion}) 
and (\ref{eq:Assertion2})), we study posterior
convergence of a particular type, termed \emph{consistency under perturbation}
in \cite{Bickel12}. One can compare this type of consistency with ordinary
posterior consistency in non-parametric models, except here the
non-parametric component is the nuisance parameter $\eta$ and we
allow for (stochastic) perturbation by (local) deformations of the
parameter of interest $\tht_n(h_n)=\tht_0+n^{-1}h_n$. In regular situations, 
this gives rise to accumulation of posterior mass around so-called
least-favourable submodels, but here the parameter
of interest is irregular and the situation is less involved: accumulation
of posterior mass occurs around $(\tht_n(h_n),\eta_0)$. Therefore,
posterior consistency under perturbation describes concentration in
$d_H$-neighbourhoods of the form, $(\rho>0)$,
\begin{equation}
\label{eq:H-ball}
  D(\rho) = \{\eta \in H: d_H(\eta,\eta_0) < \rho\}.
\end{equation}
To guarantee sufficiency of prior mass around the point of convergence,
we use Kullback--Leibler-type neighbourhoods of the form,
\begin{equation}
\label{eq:KLngb}
\begin{split}
  K_n(\rho, M) = \bigg\{\eta \in H: P_0\bigg(&\sup_{|h|\leq M}
    - \1_{A_{\theta_n(h),\eta}}\log \frac{p_{\theta_n(h),\eta}}
         {p_{\theta_0,\eta_0}}\bigg)\leq \rho^2,\\
    P_0\bigg(&\sup_{|h|\leq M} - \1_{A_{\theta_n(h),\eta}}
         \log \frac{p_{\theta_n(h),\eta}}{p_{\theta_0,\eta_0}}
      \bigg)^2\leq \rho^2 \bigg\},
\end{split}
\end{equation}
where, in the present LAE setting, 
\[
  A_{\theta_n(h),\eta} =
  \bigg\{x: \frac{p_{\theta_n(h),\eta}}{p_{\theta_0,\eta_0}}(x) > 0\bigg\}.
\]
Note that 
$\prod_{i=1}^n \1_{A_{\theta_n(h),\eta}}(X_i) = \1_{\{h \leq \Delta_n\}}$,
as in the LAE expansion.

Suppose that $A$ in (\ref{eq:posterior}) is of the form $A = B\times H$ for
some measurable $B \subset \Theta$. Since we use a product prior
$\Pi_\Theta \times \Pi_H$, the marginal posterior of the parameter
$\theta \in \Theta$ depends on the nuisance factor only through
the integrated likelihood,
\begin{equation}
\label{eq:IL}
  S_n : \Theta \to \RR : \theta \mapsto 
  \int_H\prod_{i=1}^n\frac{p_{\theta,\eta}}{p_{\theta_0,\eta_0}}(X_i)\,d\Pi_H(\eta),
\end{equation}
and its localised version, $h\mapsto s_n(h) = S_n(\tht_0+n^{-1}h)$.
One of the conditions of the subsequent theorem is a domination condition
based on the quantities,
\[
  U_n(\rho, h_n) = \sup_{\eta \in D(\rho)}
    P^n_{\theta_0,\eta} \bigg(\prod_{i=1}^n
    \frac{p_{\theta_n(h_n),\eta}}{p_{\theta_0,\eta}}(X_i)\bigg),
\]
Another condition required in the irregular version of the semiparametric
Bernstein--von Mises theorem is one-sided contiguity (\cf\ condition
{\it (iv)} of Theorem~\ref{thm:LAESBvM} below). Lemma~\ref{lem:DomLemma}
shows that such one-sided contiguity and domination as in (\ref{eq:domination})
are closely related and provides two different sufficient conditions
for both to hold in general. The log-Lipschitz construction is used 
in the examples of Section~\ref{sec:Examples}; in other applications
of the theorem it may be more convenient to by-pass 
Lemma~\ref{lem:DomLemma} and prove (\ref{eq:domination}) and contiguity
directly from the model definition.
\begin{theorem}[Irregular semiparametric Bernstein--von Mises]
\label{thm:LAESBvM}
Let $X_1,X_2,\ldots$ be distributed \iid-$P_0$, with $P_0\in\scrP$. Let
$\Pi_H$ and $\Pi_\Tht$ be priors on $H$ and $\Tht$ and assume that
$\Pi_\Theta$ is thick at $\theta_0$. Suppose that $\theta \mapsto 
P_{\theta,\eta}$ is stochastically LAE in the $\theta$-direction, for 
all $\eta$ in a $d_H$-neighbourhood of $\eta_0$ and that 
$\gamma_{\tht_0,\eta_0}>0$. Assume also that for large enough
$n$, the map $h \mapsto s_n(h)$ is continuous on $(-\infty,\Delta_n]$,
$P_0^n$-almost-surely. Furthermore, assume that there exists a sequence 
$(\rho_n)$ with $\rho_n \downarrow 0$, $n\rho_n^2 \to \infty$ such that,
\begin{itemize}
  \item[(i)] for all $M>0$, there exists a $K>0$ such that for large enough $n$,
    \[
      \Pi_H\bigl(K_n(\rho_n, M)\bigr) \geq e^{-Kn\rho_n^2},
    \]
  \item[(ii)] for all $n$ large enough, the Hellinger metric entropy satisfies,
    \[
      N\bigl(\rho_n, H, d_H\bigr) \leq e^{n\rho_n^2},
    \]
\end{itemize}
and, for every bounded, stochastic $(h_n)$,
\begin{itemize}
  \item[(iii)] the model satisfies the domination condition,
    \begin{equation}
    \label{eq:domination}
      U_n(\rho_n, h_n) = O(1),
    \end{equation}
  \item[(iv)] for every $\eta \in D(\rho)$ for $\rho > 0$ small
    enough, the sequence $P^n_{\theta_n(h_n),\eta}$ is contiguous with
    respect to the sequence $P^n_{\theta_0,\eta}$,
  \item[(v)] and for all $L > 0$, Hellinger distances satisfy the
    uniform bound,
    \[
      \sup_{\eta\in D^c(L\rho_n)}
        \frac{H\bigl(P_{\theta_n(h_n),\eta}, P_{\theta_0, \eta}\bigr)}
             {H\bigl(P_{\theta_0,\eta},P_0\bigr)} = o(1).
    \]
\end{itemize}
Finally, suppose that,
\begin{itemize}
  \item[(vi)] for every $(M_n)$, $M_n \to \infty$, the posterior satisfies
    \[
      \Pi_n\bigl(|h|\leq M_n \big| X_1, \ldots, X_n \bigr) \convprob{P_0}1.
    \]
\end{itemize}
Then the sequence of marginal posteriors for $\theta$ converges in
total variation to a negative exponential distribution,
\begin{equation}\label{eq:TVBvM}
  \sup_A \Bigl| \Pi_n\bigl(h \in A \big| X_1, \ldots, X_n \bigr)
    - \NExp_{\Delta_n, \gamma_{\tht_0,\eta_0}}(A)\Bigr| \convprob{P_0} 0.
\end{equation}
\end{theorem}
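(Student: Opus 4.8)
The plan is to reduce (\ref{eq:TVBvM}) to an LAE-type expansion of the \emph{integrated} likelihood $s_n$ and then renormalise. Write $\pi_n(h)=\pi(\tht_0+n^{-1}h)$ for the localised prior density of $h=n(\tht-\tht_0)$; thickness of $\Pi_\Tht$ at $\tht_0$ gives $\pi_n(h)/\pi_n(0)\to1$ uniformly over compacta. Because $\Pi=\Pi_\Tht\times\Pi_H$ is a product, the marginal posterior density of $h$ is proportional to $h\mapsto s_n(h)\pi_n(h)$, so everything is governed by $s_n(h)/s_n(0)$. In the LAE models at hand the set $A_{\tht_n(h),\eta}$ on which $\prod_{i=1}^n p_{\tht_n(h),\eta}(X_i)>0$ equals $\{h\le\Delta_n\}$ \emph{independently of $\eta$} (as noted after (\ref{eq:KLngb})), so this indicator factors out of the nuisance integral and the target becomes
\[
  \frac{s_n(h)}{s_n(0)}=\1_{\{h\le\Delta_n\}}\,e^{h\gamma_{\tht_0,\eta_0}+o_{P_0}(1)},
\]
the $o_{P_0}(1)$ being uniform over $|h|\le M$ for each fixed $M$ and, after a diagonal argument, over $|h|\le M_n$ for suitably slowly increasing $M_n$.

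First I would establish \emph{consistency under perturbation} of the nuisance posterior: for every bounded stochastic $(h_n)$ and every $\epsilon>0$,
\[
  \frac{\int_{D^c(\epsilon)}\prod_{i=1}^n\frac{p_{\tht_n(h_n),\eta}}{p_{\tht_0,\eta_0}}(X_i)\,d\Pi_H(\eta)}
       {\int_H\prod_{i=1}^n\frac{p_{\tht_0,\eta}}{p_{\tht_0,\eta_0}}(X_i)\,d\Pi_H(\eta)}\convprob{P_0}0.
\]
This is the standard testing-and-prior-mass scheme for posterior contraction, adapted to a moving likelihood: condition {\it (i)} lower-bounds the denominator through prior mass in the Kullback--Leibler sets $K_n(\rho_n,M)$; condition {\it (ii)} produces exponentially powerful tests from the Hellinger entropy bound, and condition {\it (v)} transfers the Hellinger separation those tests need from the pair $(P_{\tht_0,\eta},P_0)$ to the perturbed pair $(P_{\tht_n(h_n),\eta},P_{\tht_0,\eta})$; and conditions {\it (iii)}--{\it (iv)} --- domination, $U_n(\rho_n,h_n)=O(1)$, and one-sided contiguity, linked in Lemma~\ref{lem:DomLemma} --- guarantee that deforming $\tht_0$ into $\tht_n(h_n)$ preserves these estimates, contiguity in particular letting $P_{\tht_0,\eta}$-negligibility pass to $P_{\tht_n(h_n),\eta}$-negligibility.

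Next I would convert this into the integrated-likelihood expansion. Splitting the numerator of $s_n(h_n)$ over $D(\rho_n)$ and $D^c(\rho_n)$, consistency under perturbation makes the $D^c(\rho_n)$-contribution negligible relative to $s_n(0)$. On $D(\rho_n)$ the stochastic LAE hypothesis gives, for each fixed $\eta$, $\prod_{i=1}^n(p_{\tht_n(h_n),\eta}/p_{\tht_0,\eta})(X_i)=\exp(h_n\gamma_{\tht_0,\eta}+o_{P_{\tht_0,\eta}}(1))\1_{\{h_n\le\Delta_n\}}$; the domination condition {\it (iii)} supplies the uniform integrability needed to carry the $P_0$-expectation through the nuisance integral and through the $o_P(1)$, while a uniform-over-$D(\rho_n)$ control of the exponent (immediate in the examples, where $\gamma_{\tht_0,\eta}=\eta(0)$, resp.\ $\eta(1)/\tht_0$, is $d_H$-continuous in $\eta$ and $\Delta_n$ is $\eta$-free) replaces $\gamma_{\tht_0,\eta}$ by $\gamma_{\tht_0,\eta_0}$ at cost $o(1)$. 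Dividing by the same expression at $h=0$ yields $s_n(h_n)/s_n(0)=\1_{\{h_n\le\Delta_n\}}e^{h_n\gamma_{\tht_0,\eta_0}+o_{P_0}(1)}$ for every bounded stochastic $(h_n)$, which is the locally uniform statement above.

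Finally I would assemble the limit. By {\it (vi)}, for any $M_n\to\infty$ the posterior concentrates on $\{|h|\le M_n\}$ with $P_0$-probability tending to one, so it is enough to control $\Pi_n$ there. Using the expansion together with $\pi_n(h)/\pi_n(0)\to1$,
\[
  \frac{s_n(h)\pi_n(h)}{s_n(0)\pi_n(0)}=\1_{\{h\le\Delta_n\}}\,e^{h\gamma_{\tht_0,\eta_0}}\bigl(1+o_{P_0}(1)\bigr)
\]
uniformly over $|h|\le M_n$; once normalised, the right-hand side is exactly the density $g\mapsto\gamma_{\tht_0,\eta_0}e^{\gamma_{\tht_0,\eta_0}(g-\Delta_n)}\1_{\{g\le\Delta_n\}}$ of $\NExp_{\Delta_n,\gamma_{\tht_0,\eta_0}}$. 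Since $\Delta_n$ is tight (it converges weakly), the residual normalising constant converges, and Scheffé's lemma --- equivalently the usual Le~Cam device turning locally uniform convergence of posterior densities into $L^1$, hence total-variation, convergence --- delivers (\ref{eq:TVBvM}); the assumed continuity of $h\mapsto s_n(h)$ on $(-\infty,\Delta_n]$ is what makes these density manipulations legitimate near the random endpoint $\Delta_n$. I expect the main obstacle to be this middle step --- turning consistency under perturbation into the integrated-likelihood expansion \emph{uniformly in the stochastic direction $h_n$} --- since it requires controlling simultaneously a nuisance posterior under a moving likelihood, the random support endpoint $\Delta_n$, and the passage from the fixed-$\eta$ LAE $o_P(1)$ terms to an $o_P(1)$ that survives integration, which is precisely the role of the domination condition {\it (iii)} and the one-sided contiguity {\it (iv)}.
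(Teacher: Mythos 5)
Your proposal reproduces the paper's three-stage architecture almost exactly: consistency of the nuisance posterior under $n^{-1}$-perturbation (the paper's Theorem~\ref{thm:PertRate}, driven by conditions (i), (ii), (v)), an LAE expansion of the integrated likelihood $s_n$ (Theorem~\ref{thm:S-ILAE}, driven by (iii), (iv) together with the consistency just established), and finally the Le~Cam device converting the expansion plus condition (vi) into total-variation convergence (Theorem~\ref{thm:PostLAE}). Two small caveats: conditions (iii)--(iv) are used only in the integrated-LAE step, not in the perturbed-consistency step as your wording suggests; and Scheff\'e's lemma is \emph{not} equivalent to the Le~Cam device here, since the comparison measure $\NExp_{\Delta_n,\gamma_{\tht_0,\eta_0}}$ is random and moves with $n$ --- the paper instead bounds the total variation via Jensen's inequality applied to the ratio functional $f_n(g,h)$ over compacta, then diagonalises (Lemmas~2.11--2.12 of Kleijn (2003)), which is the device you also name and is the one that actually carries the argument.
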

Regarding the nuisance rate of convergence $\rho_n$, conditions {\it (i)}
and {\it (ii)} are expected in some form or other in order to achieve
consistency under perturbation. As stated, they almost coincide
with requirements for non-parametric convergence at rate
$(\rho_n)$ without a parameter of interest \cite{Ghosal00a}. A simplified
version of Theorem~\ref{thm:LAESBvM} that does not refer to any specific
nuisance $\rho_n$ is stated as Corollary~\ref{cor:RateFree}. In the
rate-free case of Corollary~\ref{cor:RateFree}, conditions on prior mass
and entropy numbers ({\it (i)} and {\it (ii)}) essentially require
nuisance consistency (at some rate rather than a specific one), thus
weakening requirements on model and prior. Concerning conditions
{\it (iii)--(v)}, note that, typically, the numerator in condition~{\it (v)}
converges to zero at rate $O(n^{-1/2})$, \cf\  (\ref{eq:HellingerOpt}),
while the denominator goes to zero at slower, non-parametric rate. As such, 
condition~{\it (v)} is to be viewed as a weak condition that rarely poses
a true restriction on the applicability of the theorem. Furthermore,
Lemma~\ref{lem:DomLemma} formulates two slightly stronger conditions to
validate both {\it (iii)} and {\it (iv)} above for any rate $(\rho_n)$. 

Condition {\it (vi)} of Theorem~\ref{thm:LAESBvM} appears to be the
hardest to verify in applications. On the other hand
it cannot be weakened since {\it (vi)} also follows from \eqref{eq:TVBvM}.
Besides condition {\it (i)}, only condition {\it (vi)} implies
a requirement on the nuisance prior $\Pi_H$. 
Experience with the examples of Section~\ref{sec:Examples} 
suggests that conditions {\it (i)}--{\it (v)}
are relatively weak in applications, while {\it (vi)} harbours the
potential for negative surprises, mainly due to semiparametric bias
leading to sub-optimal asymptotic variance, sub-optimal marginal rate
or even marginal inconsistency. On the other hand, there are conditions
under which condition~{\it (vi)} is easily seen to be valid: in
Section~\ref{sub:Marginal} we present a model condition that guarantees
marginal posterior convergence according to {\it (vi)} for \emph{any}
choice of the nuisance prior $\Pi_H$.

As discussed already after Theorem~\ref{thm:LAESBvM}, in many situations
the domination condition holds for \emph{any rate} $(\rho_n)$. This
circumstance simplifies the result substantially, leading to the
conditions that are comparable to those of Schwartz' consistency
theorem (see Schwartz (1965) \cite{Schwartz65}).
\begin{corollary}[Rate-free irregular semiparametric Bernstein--von Mises]
\label{cor:RateFree}
Let $X_1, X_2, \ldots$ be distributed \iid-$P_0$, with $P_0 \in \scrP$
and let $\Pi_\Theta$ be thick at $\theta_0$. Suppose that $\theta \mapsto 
P_{\theta, \eta}$ is stochastically LAE in the $\theta$-direction, 
for all $\eta$ in a $d_H$-neighbourhood of $\eta_0$ and that 
$\gamma_{\tht_0,\eta_0}$ is strictly positive. Also assume that
for large enough $n$, the map $h \mapsto s_n(h)$ is continuous on
$(-\infty,\Delta_n]$ $P^n_0$-almost-surely.  Furthermore,
assume that,
\begin{itemize}
  \item[(i)] for all $\rho > 0$, the Hellinger metric entropy
    satisfies $N(\rho, H, d_H) <\infty$, and the nuisance prior
    satisfies $\Pi_H(K(\rho))>0$,
  \item[(ii)] for every $M > 0$, there exists an $L > 0$ such
    that for all $\rho > 0$ and large enough $n$ $K(\rho) \subset
    K_n(L\rho , M)$,
\end{itemize}
and that for every bounded, stochastic $(h_n)$,
\begin{itemize}
  \item[(iii)] there exists an $r > 0$ such that $U_n(r, h_n)= O(1)$, 
  \item[(iv)] for every $\eta \in D(r)$ the sequence
    $P^n_{\theta_n(h_n),\eta}$ is contiguous to the sequence
    $P^n_{\theta_0,\eta}$,
  \item[(v)] and that Hellinger distances satisfy,
    $\sup_{\eta \in H} H(P_{\theta_n(h_n),\eta}, P_{\theta_0, \eta}) = O(n^{-1/2})$.
\end{itemize}
Finally, assume that,
\begin{itemize}
  \item[(vi)] for every $(M_n)$, $M_n \to \infty$, the posterior satisfies,
    \[
      \Pi_n\bigl(|h|\leq M_n | X_1, \ldots, X_n \bigr) \convprob{P_0} 1.
    \] 
\end{itemize}
Then marginal posteriors for $\theta$ converge in total variation to
a negative exponential distribution,
\[
  \sup_A \Bigl| \Pi_n\bigl(h \in A | X_1, \ldots, X_n\bigr)
    - \NExp_{\Delta_n,\gamma_{\tht_0,\eta_0}}(A)\Bigr| \convprob{P_0} 0.
\]
\end{corollary}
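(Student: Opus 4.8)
The plan is to derive Corollary~\ref{cor:RateFree} from Theorem~\ref{thm:LAESBvM} by exhibiting a single nuisance rate sequence $(\rho_n)$ that simultaneously validates all the hypotheses of the theorem. Since conditions \emph{(iii)}--\emph{(vi)} of the corollary are essentially rate-free already (they hold for some fixed radius $r>0$, or for every $M_n\to\infty$), the only real work is to choose $\rho_n\downarrow0$ with $n\rho_n^2\to\infty$ slowly enough that: the entropy bound \emph{(ii)} of Theorem~\ref{thm:LAESBvM} holds (using finiteness of $N(\rho,H,d_H)$ for every fixed $\rho$ from corollary~\emph{(i)}); the prior-mass bound \emph{(i)} of the theorem holds (using $\Pi_H(K(\rho))>0$ together with the inclusion $K(\rho)\subset K_n(L\rho,M)$ from corollary~\emph{(ii)}); and $\rho_n$ stays above the fixed threshold $r$ from corollary~\emph{(iii)}--\emph{(iv)} for all large $n$ (which is automatic, since $\rho_n\to0$; one instead works with $r$ directly there) — more precisely, for $n$ large one has $\rho_n<r$, so $D(\rho_n)\subset D(r)$ and $K_n(\rho_n,M)$-type arguments apply on the smaller ball, so the domination and contiguity hypotheses transfer. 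Condition~\emph{(v)} of Theorem~\ref{thm:LAESBvM} is immediate from corollary~\emph{(v)}: the numerator is $O(n^{-1/2})$ uniformly, while on $D^c(L\rho_n)$ the denominator $H(P_{\theta_0,\eta},P_0)=d_H(\eta,\eta_0)\geq L\rho_n$, so the ratio is $O(n^{-1/2}/(L\rho_n))=o(1)$ provided $n^{1/2}\rho_n\to\infty$.

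The concrete construction of $(\rho_n)$ is the standard diagonal argument. First I would fix, for each integer $j\geq1$, using corollary~\emph{(i)}, the finite entropy number $N(1/j,H,d_H)$ and the positive prior mass $\Pi_H(K(1/j))>0$; and, using corollary~\emph{(ii)} applied with $M=j$, an $L_j>0$ with $K(1/j)\subset K_n(L_j/j,j)$ for all $n$ large. Then I would choose an increasing sequence $n_1<n_2<\cdots$ such that for $n\geq n_j$ one has both $N(1/j,H,d_H)\leq e^{n/j^2}$ and $\Pi_H(K(1/j))\geq e^{-n/j^2}\geq e^{-(L_j^2/j^2)\,n\,(1/L_j^2)}$ — i.e. the prior-mass exponent is dominated by $K n\rho_n^2$ for the resulting rate — and such that $n_j/j^2\to\infty$. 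Setting $\rho_n=1/j$ for $n_j\leq n<n_{j+1}$ gives $\rho_n\downarrow0$ and $n\rho_n^2\geq n_j/j^2\to\infty$, and by construction conditions~\emph{(i)} and \emph{(ii)} of Theorem~\ref{thm:LAESBvM} hold (with the constant $K$ in \emph{(i)} absorbed appropriately, noting that $K$ is allowed to depend on $M$). One must take mild care that, for the chosen $M$ in theorem-\emph{(i)}, the matching $L=L_j$ with $j\geq M$ can be used, which is fine since $\rho_n\to0$ eventually places us at arbitrarily large $j$.

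With $(\rho_n)$ fixed, the remaining hypotheses of Theorem~\ref{thm:LAESBvM} are checked one by one: the stochastic LAE assumption, positivity of $\gamma_{\tht_0,\eta_0}$, and the almost-sure continuity of $h\mapsto s_n(h)$ on $(-\infty,\Delta_n]$ are carried over verbatim from the corollary's hypotheses; condition~\emph{(iii)} follows since for $n$ large $\rho_n<r$, hence $D(\rho_n)\subset D(r)$, hence $U_n(\rho_n,h_n)\leq U_n(r,h_n)=O(1)$ (monotonicity of $U_n$ in its first argument is immediate from the definition as a supremum over a shrinking ball); condition~\emph{(iv)} likewise transfers because contiguity is assumed on the fixed ball $D(r)\supset D(\rho_n)$; condition~\emph{(v)} is the $o(1)$ computation above; and condition~\emph{(vi)} is identical in both statements. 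Theorem~\ref{thm:LAESBvM} then yields exactly the asserted total-variation convergence \eqref{eq:TVBvM}.

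I expect the main obstacle — such as it is — to be purely bookkeeping: ensuring that the single sequence $(\rho_n)$ respects \emph{all} the listed requirements at once, in particular that the constant $K=K(M)$ permitted in condition~\emph{(i)} of Theorem~\ref{thm:LAESBvM} is compatible with the $L=L(M)$ appearing in corollary-\emph{(ii)} and with the slowly-growing exponents $n\rho_n^2$, and that the threshold $r$ from corollary-\emph{(iii)}--\emph{(iv)} eventually dominates $\rho_n$. None of these steps is deep; the content of the corollary is really just the observation that the rate-dependent conditions of the main theorem collapse to rate-free ones once one knows finite entropy at every scale, positive prior mass on every Kullback--Leibler ball, a uniform $O(n^{-1/2})$ Hellinger bound, and a fixed-radius domination/contiguity statement.
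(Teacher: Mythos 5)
Your overall strategy --- produce a single nuisance rate $(\rho_n)$ by diagonalisation and then invoke the rate-dependent result --- is the same as the paper's. The paper phrases this slightly differently: it invokes Corollary~\ref{cor:PertRateFree} (whose proof is cited to Bickel and Kleijn) to obtain consistency under perturbation at \emph{some} rate, notes that the fixed-radius domination and contiguity hypotheses \emph{(iii)}--\emph{(iv)} imply the rate-dependent hypotheses of Theorem~\ref{thm:LAESBvM} for large $n$, and then closes via Theorems~\ref{thm:S-ILAE} and~\ref{thm:PostLAE}; you instead verify all six hypotheses of Theorem~\ref{thm:LAESBvM} and apply it as a black box. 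These two routes are mathematically equivalent.

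However, the explicit diagonalisation you propose does not close as written. With $\rho_n = 1/j$ on $[n_j,n_{j+1})$, condition \emph{(i)} of Theorem~\ref{thm:LAESBvM} requires a lower bound on $\Pi_H\bigl(K_n(1/j,M)\bigr)$ for each fixed $M$. The inclusion supplied by corollary-\emph{(ii)} (with $L=L(M)$ and the correct choice of radius) is $K\bigl(1/(jL)\bigr)\subset K_n(1/j,M)$, so the quantity whose prior mass one must calibrate is $\Pi_H\bigl(K\bigl(1/(jL)\bigr)\bigr)$, \emph{not} $\Pi_H\bigl(K(1/j)\bigr)$. When $L\geq 1$ (the typical case), $K\bigl(1/(jL)\bigr)\subset K(1/j)$, so a lower bound on $\Pi_H\bigl(K(1/j)\bigr)$ gives no control on the quantity you actually need; the inequality chain you write, $\Pi_H(K(1/j))\geq e^{-n/j^2}\geq e^{-(L_j^2/j^2)\,n\,(1/L_j^2)}$, is a tautology and does not bridge this gap. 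Moreover your inclusion $K(1/j)\subset K_n(L_j/j,j)$ (from corollary-\emph{(ii)} with $\rho=1/j$, $M=j$) bounds the prior mass of $K_n(L_j/j,j)$, not of $K_n(1/j,M)$, and these are not comparable without extra work. The fix is routine but must be made explicit: calibrate $n_j$ against $\Pi_H\bigl(K\bigl(1/(jL_j)\bigr)\bigr)$, which is strictly positive by corollary-\emph{(i)} and known at the time $n_j$ is chosen; then, after taking $L(\cdot)$ non-decreasing without loss of generality, for any fixed $M$ and $j\geq M$ one has $L_j\geq L(M)$, hence $K\bigl(1/(jL_j)\bigr)\subset K\bigl(1/(jL(M))\bigr)\subset K_n(1/j,M)$ and $\Pi_H\bigl(K_n(1/j,M)\bigr)\geq \Pi_H\bigl(K(1/(jL_j))\bigr)\geq e^{-n/j^2}$ for $n\geq n_j$, giving condition \emph{(i)} with $K=1$. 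The remainder of your verification --- the entropy bound, $D(\rho_n)\subset D(r)$ once $\rho_n<r$ for the domination and contiguity conditions, the $O(n^{-1/2})/\rho_n=o(1)$ computation for condition \emph{(v)} (which requires $n^{1/2}\rho_n\to\infty$, ensured by $n_j/j^2\to\infty$), and the verbatim transfer of \emph{(vi)} and the continuity/LAE hypotheses --- is correct.
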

\begin{proof}
Under conditions {\it (i)}, {\it (ii)}, {\it (v)}, and the stochastic
LAE assumption, the assertion of Corollary~\ref{cor:PertRateFree} holds.
Due to conditions {\it (iii)} (and {\it (iv)}), conditions
{\it (iii)} (respectively {\it (iv)}) in Theorem~\ref{thm:LAESBvM} are
satisfied for large enough $n$. Condition {\it (vi)} then suffices
for the assertion of Theorem~\ref{thm:PostLAE}.
\end{proof}


\section{Asymptotic posterior exponentiality}
\label{sec:ape}

In this section we give the proof of Theorem~\ref{thm:LAESBvM} in
several steps: the first step (Subsection~\ref{sub:pert}) is
a proof of consistency under perturbation under a
condition on the nuisance prior $\Pi_H$ and a testing condition. In
Subsection~\ref{sub:MPAE} we show that the \emph{integral} of the
likelihood with respect to the nuisance prior displays an LAE-expansion,
if consistency under perturbation obtains and contiguity/domination
conditions are satisfied. In the third step, also discussed in
Subsection~\ref{sub:MPAE}, we show that an LAE-expansion
of the integrated likelihood gives rise to a semiparametric exponential
limit for the posterior in total variation, if the marginal posterior
for the parameter of interest converges at $n^{-1}$-rate. The
rate of marginal convergence depends on the control of likelihood 
ratios, which is discussed in Subsection~\ref{sub:Marginal}. 
Put together, the results constitute a proof of Theorem~\ref{thm:LAESBvM}. 
Stated conditions are verified in Section~\ref{sec:proofs} for the 
two examples of Section~\ref{sec:Examples}.

\subsection{Posterior convergence under perturbation}
\label{sub:pert}

Given a rate sequence $(\rho_n)$, $\rho_n \downarrow 0$, we say that the
conditioned nuisance posterior is \emph{consistent under
$n^{-1}$-perturbation at rate $\rho_n$}, if, for all bounded, stochastic
sequences $(h_n)$,
\[
  \Pi_n\bigl(\,D^c(\rho_n)\bigm|\theta=\theta_0+n^{-1}h_n,X_1,\ldots,X_n\,\bigr)
  \convprob{P_0} 0,
\]
For a more elaborate discussion of this property, the reader is
referred to Bickel and Kleijn (2012) \cite{Bickel12}.
\begin{theorem}[Posterior convergence under perturbation]
\label{thm:PertRate}
Assume there is a sequence $(\rho_n)$, $\rho_n\downarrow0$,
$n\rho_n^2\rightarrow\infty$ with the property that for all $M>0$ there
exist a $K>0$ such that,
\[
  \Pi_H(K_n(\rho_n,M)) \geq e^{-Kn\rho_n^2},\quad
  N\bigl(\rho_n,H,d_H\bigr)\leq e^{n\rho_n^2},
\]
for large enough $n$. Assume also that for all $L>0$ and all bounded,
stochastic $(h_n)$,
  \begin{equation}
    \label{eq:Hcone}
    \sup_{\eta\in D^c(L\rho_n)}\,
      \frac{H(P_{\tht_n(h_n),\eta},P_{\tht_0,\eta})}{H(P_{\tht_0,\eta},P_0)}=o(1).
  \end{equation}
Then, for every bounded, stochastic $(h_n)$ there exists an $L>0$ such that,
\[
  \Pi_n\bigl(\,D^c(L\rho_n)\,\bigm|\,
    \theta=\theta_0+n^{-1}h_n,X_1,\ldots,X_n\,\bigr)=o_{P_0}(1).
\]
\end{theorem}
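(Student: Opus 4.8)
The plan is to run the Schwartz--Ghosal--Ghosh--van~der~Vaart scheme for nonparametric posterior consistency (\cf\ \cite{Ghosal00a}), applied to the \emph{conditioned} nuisance posterior evaluated at the perturbed point $\tht_n(h_n)=\tht_0+n^{-1}h_n$. Write
\[
  \Pi_n\bigl(D^c(L\rho_n)\bigm|\tht=\tht_n(h_n),X_1,\ldots,X_n\bigr)=\frac{N_n}{D_n},\quad
  N_n=\int_{D^c(L\rho_n)}\prod_{i=1}^n\frac{p_{\tht_n(h_n),\eta}}{p_{\tht_0,\eta_0}}(X_i)\,d\Pi_H(\eta),
\]
with $D_n$ the same integral over all of $H$. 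Since $(h_n)$ is bounded in probability, fix for a given $\ep>0$ a constant $M$ with $\sup_nP_0^n(|h_n|>M)<\ep$; on $\{|h_n|\le M\}$ every supremum over $h$ may be replaced by a supremum over $|h|\le M$, and on $\{h_n>\Delta_n\}$ the integrands in $N_n$ and $D_n$ vanish identically in $\eta$ and the ratio is read as $0$. It therefore suffices to argue on $\{|h_n|\le M\}\cap\{h_n\le\Delta_n\}$, on which $\prod_{i=1}^n\1_{A_{\tht_n(h_n),\eta}}(X_i)=1$ for every $\eta$.

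\emph{Lower bound on the evidence $D_n$.} Restrict the defining integral to $K_n(\rho_n,M)$. On the event above, $\log\prod_{i=1}^n(p_{\tht_n(h_n),\eta}/p_{\tht_0,\eta_0})(X_i)\ge\sum_{i=1}^ng_\eta(X_i)$ with $g_\eta=-\sup_{|h|\le M}\bigl(-\1_{A_{\tht_n(h),\eta}}\log(p_{\tht_n(h),\eta}/p_{\tht_0,\eta_0})\bigr)$, and the two defining inequalities of $K_n(\rho_n,M)$ read precisely $P_0g_\eta\ge-\rho_n^2$ and $P_0g_\eta^2\le\rho_n^2$. Integrating against the normalised restriction of $\Pi_H$ to $K_n(\rho_n,M)$, applying Jensen's inequality and then Chebyshev's inequality to the average of the $g_\eta$ (whose mean is $\ge-\rho_n^2$ and second moment $\le\rho_n^2$), and using $n\rho_n^2\to\infty$ together with the prior-mass hypothesis $\Pi_H(K_n(\rho_n,M))\ge e^{-Kn\rho_n^2}$, one obtains a constant $K'=K'(M)$ with $P_0^n(D_n<e^{-K'n\rho_n^2})\to0$. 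The only difference from the standard evidence bound is that the $n^{-1}$-perturbation and the irregular support mismatch are already absorbed into $K_n(\rho_n,M)$ through the supremum over $|h|\le M$.

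\emph{Tests against the perturbed alternatives.} Condition (\ref{eq:Hcone}) is used twice. First, for $\eta\in D^c(L\rho_n)$ and $n$ large, the triangle inequality gives
\[
  H\bigl(P_{\tht_n(h_n),\eta},P_0\bigr)\ge H\bigl(P_{\tht_0,\eta},P_0\bigr)-H\bigl(P_{\tht_n(h_n),\eta},P_{\tht_0,\eta}\bigr)\ge\frac12\,d_H(\eta,\eta_0),
\]
so the perturbed law stays in a Hellinger cone around $P_0$ with opening at least $\frac12 d_H(\eta,\eta_0)\ge\frac12 L\rho_n$. Second, the same bound shows that inside a $d_H$-shell $\{2^kL\rho_n\le d_H(\eta,\eta_0)<2^{k+1}L\rho_n\}$ the perturbation $H(P_{\tht_n(h),\eta},P_{\tht_0,\eta})$ is $o(1)$ times the shell scale, so a $d_H$-net of that shell at scale $\asymp 2^kL\rho_n$ transfers to a Hellinger net of the perturbed family of the same order (the $h$-direction contributes only $O(1)$ pieces, since $H(P_{\tht_n(h),\eta},P_{\tht_n(h'),\eta})\lesssim(|h-h'|/n)^{1/2}$ while $|h-h'|\le 2M\ll n\rho_n^2$). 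The entropy hypothesis $N(\rho_n,H,d_H)\le e^{n\rho_n^2}$ bounds the number of pieces per shell by $e^{n\rho_n^2}$, and a Birgé-type construction for each piece (via the affinity estimate $\int\sqrt{dP_0^n\,dQ^n}=(1-\frac12 H^2(P_0,Q))^n\le e^{-\frac n2 H^2(P_0,Q)}$) yields a test with both error probabilities at most $e^{-c4^kL^2n\rho_n^2}$; taking $\phi_n$ the maximum over all pieces in all shells, a union bound (dominated by $k=0$ and the geometric tail) gives $P_0^n\phi_n=o(1)$ and $\sup_{\eta\in D^c(L\rho_n)}P_{\tht_n(h_n),\eta}^n(1-\phi_n)$ of order $e^{(1-cL^2)n\rho_n^2}$, provided $cL^2>1$.

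\emph{Assembly.} Splitting $N_n\le\phi_nD_n+\int_{D^c(L\rho_n)}(1-\phi_n)\prod_{i=1}^n(p_{\tht_n(h_n),\eta}/p_{\tht_0,\eta_0})(X_i)\,d\Pi_H(\eta)$, Fubini's theorem and the type-II bound give that the $P_0^n$-expectation of the second term is of order $e^{(1-cL^2)n\rho_n^2}$; Markov's inequality and the lower bound on $D_n$ then yield $N_n/D_n\le\phi_n+e^{[(1-cL^2)/2+K'(M)]n\rho_n^2}$ with probability tending to one, and the exponent is negative once $cL^2>1+2K'(M)$. Since also $\phi_n\to0$ in $P_0^n$-probability, the conditioned posterior mass of $D^c(L\rho_n)$ is $o_{P_0}(1)$ on $\{|h_n|\le M\}\cap\{h_n\le\Delta_n\}$, whose complement has probability at most $\ep+o(1)$; letting the truncation level $M$ grow slowly (using that $K_n(\rho_n,M)$ is essentially insensitive to $M$ because $M/n\to0$, and that $L\mapsto\Pi_n(D^c(L\rho_n)\mid\cdots)$ is non-increasing) fixes a single $L$ with $\Pi_n(D^c(L\rho_n)\mid\tht=\tht_0+n^{-1}h_n,X_1,\ldots,X_n)=o_{P_0}(1)$. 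I expect the crux to be exactly the reconciliation of the two geometries --- the likelihood in the conditioned posterior lives at $\tht_n(h_n)$ while the entropy and prior-mass hypotheses are phrased at $\tht_0$ --- which is bridged by the two-fold use of (\ref{eq:Hcone}) (keeping the alternatives separated from $P_0$, and carrying the nuisance net over to the perturbed family shell by shell), together with checking that the evidence bound survives the stochastic, $n^{-1}$-scale perturbation.
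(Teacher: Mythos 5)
Your proposal is correct and follows the paper's own route: it is exactly the Ghosal--Ghosh--van~der~Vaart / Bickel--Kleijn scheme that the paper sketches, with your evidence lower bound over $K_n(\rho_n,M)$ being the content of Lemma~\ref{lem:GGVlike} (the paper's adaptation of Lemma~8.1 of \cite{Ghosal00a}) and your Hellinger-shell test construction --- carried over to the perturbed family via the cone condition (\ref{eq:Hcone}) --- being precisely what the paper delegates to Lemma~3.2 of \cite{Bickel12}, followed by the standard numerator/denominator assembly as in Theorem~3.1 of \cite{Bickel12}. You spell out the details the paper outsources to those references (and your exponent in the type-II bound is a factor $e^{n\rho_n^2}$ looser than necessary, which only forces a larger $L$ and does not affect the conclusion), but the argument is the same one.
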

The proof of this theorem can be broken down into two separate steps,
with the following testing condition in between: for every bounded,
stochastic $(h_n)$ and all $L>0$ large enough, a test sequence
$(\phi_n)$ and constant $C>0$ must exist, such that,
\begin{equation}
  \label{eq:test-conda}
  P^n_0\phi_n\rightarrow0,\qquad
  \sup_{\eta\in D^c(L\rho_n)}P^n_{\theta_n(h_n),\eta}(1-\phi_n)\leq e^{-CL^2n\rho_n^2},
\end{equation}
for large enough $n$. According to Lemma~3.2 in \cite{Bickel12}, the
metric entropy condition and ``cone condition'' (\ref{eq:Hcone}) suffice
for the existence of such a test sequence. 
While the above testing argument
is instrumental in the control of the numerator of (\ref{eq:posterior}),
the denominator of the posterior is lower-bounded with the help of the
following lemma, which adapts Lemma~8.1 in \cite{Ghosal00a} to
$n^{-1}$-perturbed, irregular setting. The proof of Theorem~\ref{thm:PertRate} 
follows then the proof of Theorem~3.1 in \cite{Bickel12}.
\begin{lemma}
\label{lem:GGVlike}
Let $(h_n)$ be stochastic and bounded by some $M>0$. Then
\[
  P^n_0\bigg(
    \bigg\{\int_H \prod_{i=1}^n \frac{p_{\theta_n(h_n),\eta}}{p_0}(X_i)
      \,d\Pi_H(\eta) < e^{-(1+C)n\rho^2}\Pi_H(K_n(\rho,M))\bigg\}
   \cap \{h_n \leq \Delta_n\} \bigg) \leq \frac{1}{C^2n\rho^2},
\]
for all $C>0$, $\rho > 0$ and $n \geq 1$, where
$\theta_n(h_n) = \theta_0 + n^{-1}h_n$.
\end{lemma}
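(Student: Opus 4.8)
This is the irregular, $n^{-1}$-perturbed analogue of Lemma~8.1 in Ghosal, Ghosh and van~der~Vaart (2000), and the strategy mirrors theirs, with the extra bookkeeping forced by the indicator $\1_{\{h_n\leq\Delta_n\}}$ coming from the LAE structure. The plan is to work on the event $\{h_n\leq\Delta_n\}$ throughout, since on the complement there is nothing to prove (the stated bound is a probability and the lemma asserts an upper bound on the probability of an intersection with $\{h_n\leq\Delta_n\}$). First I would restrict the domain of integration in the integrated likelihood from all of $H$ down to the Kullback--Leibler-type neighbourhood $K_n(\rho,M)$: since the integrand is nonnegative,
\[
  \int_H \prod_{i=1}^n \frac{p_{\theta_n(h_n),\eta}}{p_0}(X_i)\,d\Pi_H(\eta)
  \;\geq\; \int_{K_n(\rho,M)} \prod_{i=1}^n \frac{p_{\theta_n(h_n),\eta}}{p_0}(X_i)\,d\Pi_H(\eta).
\]
After dividing by $\Pi_H(K_n(\rho,M))$ (which we may assume positive, else the claim is vacuous) this exhibits the normalised integral as an average over the probability measure $\Pi_H(\cdot\mid K_n(\rho,M))$, so by Jensen's inequality applied to $-\log$,
\[
  \log\frac{1}{\Pi_H(K_n(\rho,M))}\int_{K_n(\rho,M)}\prod_{i=1}^n\frac{p_{\theta_n(h_n),\eta}}{p_0}(X_i)\,d\Pi_H(\eta)
  \;\geq\; \frac{1}{\Pi_H(K_n(\rho,M))}\int_{K_n(\rho,M)}\sum_{i=1}^n\log\frac{p_{\theta_n(h_n),\eta}}{p_0}(X_i)\,d\Pi_H(\eta).
\]
Hence it suffices to show that the event on which the right-hand side drops below $-(1+C)n\rho^2$, intersected with $\{h_n\leq\Delta_n\}$, has $P_0^n$-probability at most $1/(C^2n\rho^2)$.

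The second step is a second-moment (Chebyshev) bound on the averaged log-likelihood ratio. On the event $\{h_n\leq\Delta_n\}$ the indicator $\1_{A_{\theta_n(h_n),\eta}}(X_i)$ equals $1$ for all $i$ and all $\eta$ under consideration (this is exactly the identity $\prod_i\1_{A_{\theta_n(h),\eta}}(X_i)=\1_{\{h\leq\Delta_n\}}$ noted after \eqref{eq:KLngb}), so the truncation in the definition of $K_n(\rho,M)$ may be dropped and we are looking at the genuine log-ratio $\sum_{i=1}^n Z_i(\eta)$ with $Z_i(\eta)=\log(p_{\theta_n(h_n),\eta}/p_{\theta_0,\eta_0})(X_i)$. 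Writing $W=\int_{K_n(\rho,M)}\sum_{i=1}^n Z_i(\eta)\,d\Pi_H(\eta)/\Pi_H(K_n(\rho,M))$, the membership $\eta\in K_n(\rho,M)$ gives, uniformly, $P_0(-Z_i(\eta))\leq\rho^2$ and $P_0(-Z_i(\eta))^2\leq\rho^2$ (here one must be a little careful with the supremum over $|h|\leq M$ in \eqref{eq:KLngb}: since $(h_n)$ is bounded by $M$, $-Z_i(\eta)$ is pointwise dominated by $\sup_{|h|\leq M}-\1_{A_{\theta_n(h),\eta}}\log(p_{\theta_n(h),\eta}/p_{\theta_0,\eta_0})$, so both moment bounds transfer). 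By Fubini and the \iid\ structure, $P_0^n W\geq -n\rho^2$ and $\var_{P_0^n}(W)\leq n\var_{P_0}(\bar Z)$ where $\bar Z$ is the $\Pi_H(\cdot\mid K_n(\rho,M))$-average of the $Z_i(\eta)$; Jensen in the $\eta$-average plus the uniform second-moment bound give $\var_{P_0}(\bar Z)\leq P_0(\bar Z)^2\leq\rho^2$, hence $\var_{P_0^n}(W)\leq n\rho^2$. Chebyshev's inequality then yields
\[
  P_0^n\bigl(\{W<-(1+C)n\rho^2\}\cap\{h_n\leq\Delta_n\}\bigr)
  \;\leq\; P_0^n\bigl(W-P_0^nW<-Cn\rho^2\bigr)
  \;\leq\; \frac{n\rho^2}{(Cn\rho^2)^2} \;=\; \frac{1}{C^2n\rho^2},
\]
which is the claimed bound.

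The main obstacle, and the only genuinely nonroutine point, is the interplay between the stochastic perturbation $h_n$ and the event $\{h_n\leq\Delta_n\}$: one must ensure that on this event the truncated quantities in $K_n(\rho,M)$ really do control the untruncated log-ratios appearing after applying Jensen, and that measurability is not an issue when $h_n$ is random rather than a fixed sequence (the map $\omega\mapsto\sum_i\log(p_{\theta_n(h_n(\omega)),\eta}/p_0)(X_i(\omega))$ must be jointly measurable so that Fubini applies to the $\Pi_H$-integral — this follows from the assumed measurability of $(\theta,\eta)\mapsto P_{\theta,\eta}$). Once the problem is conditioned to $\{h_n\leq\Delta_n\}$ and the supremum-over-$|h|\leq M$ formulation of $K_n(\rho,M)$ is invoked to absorb the randomness of $h_n$, the argument is the standard Ghosal--Ghosh--van~der~Vaart evidence lower bound, and the constants $C,\rho,n$ enter exactly as stated with no further restriction needed.
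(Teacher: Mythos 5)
Your proof is correct and follows essentially the same route as the paper: restrict the $H$-integral to $K_n(\rho,M)$, normalize and apply Jensen, use $\{h_n\leq\Delta_n\}$ to drop the support indicators, dominate the data-dependent $h_n$ by the deterministic $\sup_{|h|\leq M}$ built into the definition of $K_n(\rho,M)$, and finish with Chebyshev using the first- and second-moment bounds in (\ref{eq:KLngb}); the paper phrases the Chebyshev step via the empirical process $\GG_n$, you via centering $W$, which is equivalent. One small point of hygiene in the write-up: after you invoke the domination $-Z_i(\eta)\leq\sup_{|h|\leq M}\bigl(-\1_{A_{\theta_n(h),\eta}}\log(p_{\theta_n(h),\eta}/p_{\theta_0,\eta_0})\bigr)(X_i)$, you should carry out the Chebyshev computation on the replaced quantity $\widetilde W := \int_{K_n(\rho,M)}\sum_i\sup_{|h|\leq M}\bigl(-\1_{A}\log\bigr)(X_i)\,d\Pi_H(\eta|K_n)$ rather than nominally on $W$ itself — since $W$ still contains the sample-dependent $h_n$, its summands are not \iid\ and expressions like $P_0^nW$ and $\var_{P_0^n}(W)\leq n\var_{P_0}(\bar Z)$ are not literally available; the inclusion $\{W<-(1+C)n\rho^2\}\cap\{h_n\leq\Delta_n\}\subset\{\widetilde W>(1+C)n\rho^2\}$ is where the replacement should occur, and then $\widetilde W$ is a genuine \iid\ sum to which Fubini, Jensen (in $\eta$), and Chebyshev apply cleanly. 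With that replacement made explicit, the argument matches the paper's line by line.
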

The proof of this lemma can be found in Section~\ref{sec:proofs}.

In many applications, $(\rho_n)$ does not play an explicit role because
consistency \emph{at some rate} is sufficient. The following provides
a possible formulation of weakened conditions guaranteeing consistency
under perturbation. Corollary~\ref{cor:PertRateFree} is
based on the family of Kullback--Leibler neighbourhoods that would also play
a role for marginal posterior consistency of the nuisance with
\emph{known} $\tht_0$ (as in \cite{Ghosal00a}):
\[
  K(\rho)=\Bigl\{\eta \in H\,:\, -P_0\log\frac{p_{\theta_0,\eta}}{p_0}
    \leq\rho^2,
    P_0\Bigl(\log\frac{p_{\theta_0, \eta}}{p_0}\Bigr)^2\leq\rho^2\Bigr\},
\]
for $\rho>0$.
\begin{corollary}
\label{cor:PertRateFree}
Assume that for all $\rho>0$, $N(\rho,H,d_H)<\infty$ and
$\Pi_H(K(\rho))>0$. Furthermore, assume that for every stochastic,
bounded $(h_n)$,
\begin{itemize}
\item[(i)] for every $M>0$, there exists an $L>0$ such that for all
  $\rho>0$ and large enough $n$, $K(\rho)\subset K_n(L\rho, M)$. 
\item[(ii)] Hellinger distances satisfy
  $\sup_{\eta \in H} H(P_{\theta_n(h_n),\eta}, P_{\theta_0, \eta}) = O(n^{-1/2})$.
\end{itemize}
Then there exists a sequence $(\rho_n)$, $\rho_n\downarrow0$,
$n\rho_n^2\to\infty$, such that the conditional nuisance posterior
converges under $n^{-1}$-perturbation at rate $(\rho_n)$.
\end{corollary}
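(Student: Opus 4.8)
The plan is to deduce the corollary directly from Theorem~\ref{thm:PertRate} (posterior convergence under perturbation); the only real work is to manufacture a single rate $(\rho_n)$, with $\rho_n\downarrow0$ and $n\rho_n^2\to\infty$, for which the hypotheses of that theorem hold. Three things must be arranged: that $(\rho_n)$ dominates the entropy requirement $N(\rho_n,H,d_H)\leq e^{n\rho_n^2}$; that it dominates the prior-mass requirement $\Pi_H(K_n(\rho_n,M))\geq e^{-Kn\rho_n^2}$; and that it is slow enough for the cone condition~(\ref{eq:Hcone}) to follow from hypothesis~(ii). The assumptions that $N(\rho,H,d_H)<\infty$ and $\Pi_H(K(\rho))>0$ for every $\rho>0$ are exactly what a diagonalisation needs for the first two, while hypothesis~(i) is the bridge from the $K(\rho)$-neighbourhoods to the $K_n(\cdot,M)$-neighbourhoods that Theorem~\ref{thm:PertRate} actually uses.

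First I would build the rate. Put $\psi(\rho)=\max\{\log N(\rho,H,d_H),\,-\log\Pi_H(K(\rho))\}$, which by assumption is finite and non-increasing on $(0,\infty)$. Choosing $\delta_k\downarrow0$ (say $\delta_k=1/k$) and then an increasing integer sequence $n_k$ so large that $n\delta_k^2\geq\max\{k,\psi(\delta_k),\psi(\delta_k^2)\}$ whenever $n\geq n_k$, and setting $\rho_n=\delta_k$ for $n_k\leq n<n_{k+1}$, produces $\rho_n\downarrow0$, $n\rho_n^2\to\infty$, $\log N(\rho_n,H,d_H)\leq n\rho_n^2$, and $-\log\Pi_H(K(\rho_n/L))\leq n\rho_n^2$ for every fixed $L\geq1$ and all large $n$ (the last point using $\rho_n/L\geq\delta_k^2$ once $k\geq L$, together with monotonicity of $K(\cdot)$). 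The extra term $\psi(\delta_k^2)$ is what allows the single rate to swallow the multiplicative constant produced by hypothesis~(i); inserting slack in the argument of $\psi$ rather than multiplicatively is essential, since $-\log\Pi_H(K(\cdot))$ may blow up arbitrarily fast as its argument tends to $0$.

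Next I would check the hypotheses of Theorem~\ref{thm:PertRate} for this $(\rho_n)$. The entropy bound is immediate. For the prior-mass bound, fix a bounded stochastic $(h_n)$ with bound $M$; hypothesis~(i) yields $L=L(M)\geq1$ with $K(\rho)\subset K_n(L\rho,M)$ for all $\rho>0$ and $n$ large, so taking $\rho=\rho_n/L$ gives $\Pi_H(K_n(\rho_n,M))\geq\Pi_H(K(\rho_n/L))\geq e^{-n\rho_n^2}$ eventually, i.e.\ the requirement holds with $K=1$. For the cone condition~(\ref{eq:Hcone}), note that for $\eta\in D^c(L\rho_n)$ one has $H(P_{\theta_0,\eta},P_0)=d_H(\eta,\eta_0)\geq L\rho_n$ while hypothesis~(ii) bounds the numerator by $O(n^{-1/2})$, so the ratio is $O\bigl((Ln^{1/2}\rho_n)^{-1}\bigr)=o(1)$ because $n^{1/2}\rho_n\to\infty$; this holds for every $L>0$. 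Theorem~\ref{thm:PertRate} now applies and gives, for every bounded stochastic $(h_n)$, some $L>0$ with $\Pi_n\bigl(D^c(L\rho_n)\bigm|\theta=\theta_0+n^{-1}h_n,X_1,\ldots,X_n\bigr)=o_{P_0}(1)$. Since with $K=1$ the constant $L$ in that conclusion depends only on the test construction underlying Theorem~\ref{thm:PertRate} and not on the perturbation, replacing $(\rho_n)$ by $(L\rho_n)$ — still admissible, and still satisfying the above bounds after adjusting constants — produces a rate at which the conditional nuisance posterior is consistent under $n^{-1}$-perturbation, as claimed. (Alternatively, one observes that a perturbation rate is meaningful only up to a multiplicative constant.)

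The step I expect to be delicate is the rate construction, specifically ensuring the single $(\rho_n)$ is slow enough to absorb the $M$-dependent constant $L$ from hypothesis~(i) uniformly in $(h_n)$; the $\psi(\delta_k^2)$ device handles it, but one must resist absorbing $L$ multiplicatively. Everything else — the entropy bound, the passage from~(ii) to the cone condition via the identity $H(P_{\theta_0,\eta},P_0)=d_H(\eta,\eta_0)$, and the final appeal to Theorem~\ref{thm:PertRate} — is routine.
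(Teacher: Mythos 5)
The paper proves this corollary by a single sentence citing Corollary~3.3 in Bickel and Kleijn (2012); you supply the full argument that citation would contain. Your construction of the rate by diagonalization, including the $\psi(\delta_k^2)$ slack to absorb the $M$-dependent constant from hypothesis~(i), is correct, as is the verification of the three hypotheses of Theorem~\ref{thm:PertRate}: the entropy bound is immediate, the prior-mass bound with uniform $K=1$ follows from (i) plus the slack, and the passage from (ii) to the cone condition via $H(P_{\theta_0,\eta},P_0)=d_H(\eta,\eta_0)\geq L\rho_n$ combined with $n\rho_n^2\to\infty$ is exactly right.

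There is, however, a gap in the last step. Theorem~\ref{thm:PertRate} delivers, for each bounded stochastic $(h_n)$, a constant $L=L(h_n)$ with $\Pi_n\bigl(D^c(L\rho_n)\bigm|\cdots\bigr)=o_{P_0}(1)$; the definition of ``consistency under $n^{-1}$-perturbation at rate $\rho_n$'' requires a single sequence that works for \emph{all} bounded $(h_n)$. You assert that, having arranged $K=1$ uniformly, the constant $L$ in the conclusion depends only on the test construction and not on $(h_n)$; this is plausible but unsubstantiated without opening up the proof of Theorem~\ref{thm:PertRate} and checking that the testing constant $C$ in (\ref{eq:test-conda}) can be chosen uniformly over the bound $M$ of $(h_n)$. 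Your parenthetical remark that a rate is meaningful only up to a multiplicative constant does not rescue this, since the constant here may depend on the perturbation sequence. The cleaner fix, which sidesteps the uniformity question entirely, is to inflate the diagonalised rate by a slowly diverging factor: having built $\rho_n$, replace it with $\rho'_n=c_n\rho_n$ where $c_n\to\infty$, $c_n\rho_n\to0$ and $n(c_n\rho_n)^2\to\infty$ (for instance $c_n=\rho_n^{-1/2}$ on each block, after also requiring $n_k\geq k^{3}$ in the block construction). Then for any fixed $(h_n)$, eventually $c_n>L(h_n)$, so $D^c(\rho'_n)\subset D^c(L(h_n)\rho_n)$ and the conclusion of Theorem~\ref{thm:PertRate} upgrades to consistency at the fixed rate $(\rho'_n)$ without any appeal to uniformity of $L$.
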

\begin{proof}
See the proof of Corollary~3.3 in Bickel and Kleijn (2012) \cite{Bickel12}.
\end{proof}

\subsection{Marginal posterior asymptotic exponentiality}
\label{sub:MPAE}

To see how the irregular Bernstein--von~Mises assertion (\ref{eq:assertion})
arises, we note the following: the marginal posterior density
$\pi_n:\Theta\to\RR$ for the parameter of interest with respect to the
prior $\Pi_\Theta$ is given by,
\[
  \pi_n(\theta)
  =\int_H\prod_{i=1}^n\frac{p_{\theta,\eta}}{p_{\theta_0,\eta_0}}(X_i)\,d\Pi_H(\eta)
    \Bigg /
  \int_\Theta\int_H\prod_{i=1}^n\frac{p_{\theta,\eta}}{p_{\theta_0,\eta_0}}(X_i)\,
    d\Pi_H(\eta)\,d\Pi_\Theta(\theta),
\]
$P^n_0$-almost-surely. This form resembles that of a \emph{parametric}
posterior density on $\Theta$ if one replaces the ordinary,
parametric likelihood by the integral of the semiparametric
likelihood with respect to the nuisance prior, \cf\ $S_n(\tht)$ in
(\ref{eq:IL}). If $S_n(\tht)$ displays properties similar to those that
lead to posterior asymptotic normality in the smooth parametric case,
we may hope that in the irregular, semiparametric setting the classical
proof can be largely maintained. More specifically, we shall replace
the LAN expansion of the parametric likelihood by a stochastic LAE
expansion of the likelihood integrated over the nuisance as in
(\ref{eq:IL}). Theorem~\ref{thm:PostLAE} uses this observation
to reduce the proof of the main theorem of this paper to a strictly
parametric discussion.

In this subsection, we prove marginal posterior asymptotic exponentiality
in two parts: first we show that $S_n(\tht)$ satisfies an LAE expansion
of its own, and second, we use this to obtain Bernstein--von~Mises
assertion (\ref{eq:assertion}), proceeding along the lines of proofs
presented in Le~Cam and Yang (1990) \cite{LeCam90a}, Kleijn and
van~der~Vaart (2012) \cite{Kleijn12} and Kleijn (2003) \cite{Kleijn03}.
We restrict attention to the case in which the model itself is
stochastically LAE and the posterior is consistent under
$n^{-1}$-perturbation (although other, less stringent formulations are
conceivable).
\begin{theorem}[Integrated Local Asymptotic Exponentiality]
\label{thm:S-ILAE}
Suppose that the model is stochastically locally asymptotically
exponential in the $\tht$-direction at all points $(\theta_0,\eta)$,
$(\eta \in H)$ and that conditions (iii) and (iv) of Theorem~\ref{thm:LAESBvM}
are satisfied. Furthermore, assume that model and prior $\Pi_H$ are
such that for some rate $(\rho_n)$ and every bounded, stochastic $(h_n)$,
\[
  \Pi_n\bigl(\,D^c(\rho_n)\bigm|
    \theta=\theta_0+n^{-1}h_n;X_1,\ldots,X_n\,\bigr)\convprob{P_0} 0.
\]
Then the integral LAE-expansion holds, \ie,
\[
  \int_H \prod_{i=1}^n \frac{p_{\theta_n(h_n),\eta}}{p_0}(X_i)\,d\Pi_H(\eta)
    = \int_H \prod_{i=1}^n \frac{p_{\theta_0,\eta}}{p_0}(X_i)\,d\Pi_H(\eta)
      \,\exp(h_n\gamma_{\tht_0,\eta_0}+o_{P_0}(1))\1_{\{h_n \leq \Delta_n\}},
\]
for any stochastic sequence $(h_n) \subset \RR$ that is bounded in
$P_0$-probability.
\end{theorem}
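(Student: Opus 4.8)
The plan is to exploit the stochastic LAE expansion of the pointwise likelihood ratios together with the consistency-under-perturbation hypothesis to replace the nuisance in the perturbed integrand by the true nuisance $\eta_0$, up to an $o_{P_0}(1)$ multiplicative error on the event $\{h_n\leq\Delta_n\}$. Write
\[
  s_n(h_n) = \int_H \prod_{i=1}^n \frac{p_{\theta_n(h_n),\eta}}{p_0}(X_i)\,d\Pi_H(\eta),
  \qquad
  s_n(0) = \int_H \prod_{i=1}^n \frac{p_{\theta_0,\eta}}{p_0}(X_i)\,d\Pi_H(\eta),
\]
and split the domain of integration over $D(\rho_n)$ and $D^c(\rho_n)$. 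The consistency-under-perturbation assumption says precisely that the contribution of $D^c(\rho_n)$ to $s_n(h_n)$ is negligible relative to its contribution to $s_n(0)$, so the main work is on the restriction to the shrinking neighbourhood $D(\rho_n)$ of $\eta_0$.

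First I would handle the tail $D^c(\rho_n)$. By definition of the conditional nuisance posterior, the hypothesis $\Pi_n(D^c(\rho_n)\mid\theta=\theta_n(h_n);X_1,\ldots,X_n)\convprob{P_0}0$ means
\[
  \frac{\int_{D^c(\rho_n)} \prod_{i=1}^n \frac{p_{\theta_n(h_n),\eta}}{p_0}(X_i)\,d\Pi_H(\eta)}
       {\int_H \prod_{i=1}^n \frac{p_{\theta_n(h_n),\eta}}{p_0}(X_i)\,d\Pi_H(\eta)}
  \convprob{P_0} 0,
\]
so that $s_n(h_n)$ equals its restriction to $D(\rho_n)$ up to a factor $1+o_{P_0}(1)$; applying the same statement with $h_n\equiv 0$ gives the analogous reduction for $s_n(0)$. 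On $D(\rho_n)$, the stochastic LAE property at the point $(\theta_0,\eta)$ gives, for each such $\eta$, the pointwise expansion $\prod_i (p_{\theta_n(h_n),\eta}/p_{\theta_0,\eta})(X_i) = \exp(h_n\gamma_{\theta_0,\eta}+o_{P_{\theta_0,\eta}}(1))\1_{\{h_n\leq\Delta_n\}}$; continuity of $\eta\mapsto\gamma_{\theta_0,\eta}$ near $\eta_0$ (it is $\eta(0)$, resp.\ $\eta(1)/\theta_0$, in the examples) lets me replace $\gamma_{\theta_0,\eta}$ by $\gamma_{\theta_0,\eta_0}$ uniformly over $D(\rho_n)$. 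Multiplying through by $\prod_i (p_{\theta_0,\eta}/p_0)(X_i)$ and integrating over $D(\rho_n)$ against $d\Pi_H$ then yields $s_n(h_n)\1_{D(\rho_n)} = \exp(h_n\gamma_{\theta_0,\eta_0}+o_{P_0}(1))\,\1_{\{h_n\leq\Delta_n\}}\,s_n(0)\1_{D(\rho_n)}$, provided the $o_{P_{\theta_0,\eta}}(1)$ remainder can be controlled uniformly, in a suitably integrated sense, over $D(\rho_n)$.

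The main obstacle is precisely this last point: converting the \emph{pointwise-in-$\eta$} remainder in the LAE expansion into a remainder that is uniform (or uniform-in-$L^1(\Pi_H)$-average) over the shrinking set $D(\rho_n)$, under a frequentist probability $P_0$ that differs from the $P_{\theta_0,\eta}$ governing each individual expansion. This is where conditions (iii) (domination, $U_n(\rho_n,h_n)=O(1)$) and (iv) (one-sided contiguity of $P^n_{\theta_n(h_n),\eta}$ w.r.t.\ $P^n_{\theta_0,\eta}$) enter: contiguity transfers the $o_{P_{\theta_0,\eta}}(1)$ statements to $o_{P_{\theta_n(h_n),\eta}}(1)$, and the domination bound $U_n$ — an average of the perturbed-over-unperturbed likelihood ratio, uniformly over $D(\rho_n)$ — allows a Fubini/Markov argument to bound the $\Pi_H$-integral of the remainder over $D(\rho_n)$ by something that is $O(1)$ in $P_0$-probability and $o(1)$ after the $\rho_n\downarrow 0$ limit is taken. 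Concretely I would write the ratio $s_n(h_n)\1_{D(\rho_n)}/(s_n(0)\1_{D(\rho_n)})$ as a $\Pi_H$-mixture (with the $D(\rho_n)$-restricted, $p_{\theta_0,\eta}$-reweighted measure as mixing measure) of the individual likelihood ratios $\prod_i(p_{\theta_n(h_n),\eta}/p_{\theta_0,\eta})(X_i)$, invoke the LAE expansion inside, and estimate the deviation of the mixture of $\exp(o_{P_{\theta_0,\eta}}(1))$ terms from $1$ using Markov's inequality together with the domination bound $U_n(\rho_n,h_n)=O(1)$ (to control first moments under $P_0$) and contiguity (to upgrade in-probability-under-$P_{\theta_0,\eta}$ smallness); the indicator $\1_{\{h_n\leq\Delta_n\}}$ factors out cleanly because $\prod_i\1_{A_{\theta_n(h_n),\eta}}(X_i)=\1_{\{h_n\leq\Delta_n\}}$ does not depend on $\eta$. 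Assembling the tail reduction and the $D(\rho_n)$ expansion gives the claimed identity.
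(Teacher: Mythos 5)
Your proposal has the right skeleton and matches the paper's proof in its main lines: restrict to $D(\rho_n)$ using the consistency-under-perturbation hypothesis (applied both to the given $h_n$ and, in the final ``reversal'' step, to $h_n\equiv 0$); apply the pointwise LAE expansion for each fixed $\eta$; and push the pointwise remainder into a $\Pi_H$-integrated $o_{P_0}(1)$ by a Fubini/Markov argument that uses condition \emph{(iv)} (contiguity, to upgrade $o_{P_{\theta_0,\eta}}(1)$ to $o_{P_{\theta_n(h_n),\eta}}(1)$) together with \emph{(iii)}. You also correctly observe that the indicator $\1_{\{h_n\leq\Delta_n\}}$ is $\eta$-free and factors out. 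All of this agrees with the paper's argument, which formalizes the ``Fubini/Markov'' step by introducing events $G_n(\eta,\ep)=\{\sup_{|h|\le M}|R_n(h,\eta;\sample_n)|\le\ep/2\}$ and showing $\int_{D(\rho_n)}P^n_{\theta_n(h_n),\eta}(G_n^c)\,d\Pi_H(\eta)\to 0$ via reverse Fatou.

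The genuine gap is in the way you replace $\gamma_{\theta_0,\eta}$ by $\gamma_{\theta_0,\eta_0}$. You invoke ``continuity of $\eta\mapsto\gamma_{\theta_0,\eta}$ near $\eta_0$'' to get a uniform replacement over $D(\rho_n)$, but no such continuity is assumed in the theorem, and it is not innocuous: $D(\rho_n)$ is a \emph{Hellinger} ball, while in the examples $\gamma_{\theta_0,\eta}$ is a \emph{pointwise} quantity such as $\eta(0)$ or $\eta(1)/\theta_0$, and Hellinger proximity does not control pointwise values of densities without further structure. The paper avoids this entirely: it defines deterministic (sample-independent) events $F_n(\eta,\ep)=\{M|\gamma_{\theta_0,\eta}-\gamma_{\theta_0,\eta_0}|\le\ep\}$ and shows $\int_{D(\rho_n)}P^n_{\theta_n(h_n),\eta}(F_n^c)\,d\Pi_H(\eta)\to0$ by reverse Fatou, using only that for each \emph{fixed} $\eta\neq\eta_0$ one eventually has $\eta\notin D(\rho_n)$ since $\rho_n\downarrow 0$, while $F_n^c(\eta_0,\ep)=\emptyset$. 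That is, the shrinking of the Hellinger ball to the singleton $\{\eta_0\}$ does the work that you are trying to extract from continuity of $\gamma$. To repair your proof, drop the continuity claim and substitute this Fatou-on-a-shrinking-ball argument; the rest of your outline then goes through essentially as the paper's does.
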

The following theorem uses the above integrated LAE expansion in
conjunction with a marginal posterior convergence condition to derive
the exponential Bernstein--von~Mises assertion. Marginal posterior
convergence forms the subject of the next subsection.
\begin{theorem}[Posterior asymptotic exponentiality]
\label{thm:PostLAE}
Let $\Theta$ be open in $\RR$ with thick prior $\Pi_\Theta$.
Suppose that for every $n \geq 1$, $h \mapsto s_n(h)$ is continuous
on $(-\infty, \Delta_n]$, $P_0$-almost-surely. Assume that for every
stochastic sequence $(h_n)\subset \RR$ that is bounded in probability,
\begin{equation}
  \label{eq:ilae}
  \frac{s_n(h_n)}{s_n(0)}=\exp(h_n\gamma_{\tht_0,\eta_0}
    +o_{P_0}(1))\1_{\{h_n \leq \Delta_n\}},
\end{equation}
for some positive constant $\gamma_{\tht_0,\eta_0}$. Suppose that for every
$M_n\to\infty$, we have,
\begin{equation}
\label{eq:cons}
  \Pi_n\bigl(\,|h|\leq M_n\,\bigm|\,X_1,\ldots,X_n\,\bigr)\convprob{P_0}1.
\end{equation}
Then the sequence of marginal posteriors for $\theta$ is asymptotically
exponential in $P_0$-probability, converging in total variation to
a negative exponential distribution,
\begin{equation}\label{eq:iConv}
  \sup_A\Bigl|\,\Pi_n\bigl(\,h\in A\,\bigm|\,X_1,\ldots,X_n\,\bigr) 
    - \NExp_{\Delta_n,\gamma_{\tht_0,\eta_0}}(A) \,\Bigr|\convprob{P_0} 0.
\end{equation}
\end{theorem}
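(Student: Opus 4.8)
The plan is to run the classical (parametric, LAN-based) Bernstein--von~Mises argument, with the Gaussian reference replaced by the negative exponential $\NExp_{\Delta_n,\gamma_{\tht_0,\eta_0}}$ and the LAN expansion replaced by the integrated LAE expansion \eqref{eq:ilae}. Write $\gamma=\gamma_{\tht_0,\eta_0}$. Since $\Pi_\Theta$ is thick with density $\pi$, the marginal posterior for $h=n(\tht-\tht_0)$ has Lebesgue density
\[
  q_n(h)=\frac{\bigl(s_n(h)/s_n(0)\bigr)\,\pi(\tht_0+n^{-1}h)}
              {\int\bigl(s_n(g)/s_n(0)\bigr)\,\pi(\tht_0+n^{-1}g)\,dg}
\]
(here $s_n(0)>0$ $P_0^n$-a.s., implicitly, for \eqref{eq:ilae} to be meaningful, and the Jacobian $n^{-1}$ cancels). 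Both $q_n$ and the density $\nu_n(h)=\gamma e^{\gamma(h-\Delta_n)}\1_{\{h\le\Delta_n\}}$ of $\NExp_{\Delta_n,\gamma}$ are supported on $(-\infty,\Delta_n]$, because $s_n(h)=0$ for $h>\Delta_n$. Since $\sup_A|\,\cdot\,|$ in \eqref{eq:iConv} is total variation distance, it suffices to show $\int|q_n-\nu_n|\to0$ in $P_0$-probability.

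\emph{Step 1: local uniformisation of the expansion.} Set $R_n(h)=\log\bigl(s_n(h)/s_n(0)\bigr)-h\gamma$ for $h\le\Delta_n$, so \eqref{eq:ilae} reads $R_n(h_n)=o_{P_0}(1)$ along every bounded stochastic $(h_n)$. Using the assumed continuity of $h\mapsto s_n(h)$ on $(-\infty,\Delta_n]$, the supremum of $|R_n|$ over $[-M,\Delta_n]$ is attained at a measurable point; feeding a (near-)maximiser back into \eqref{eq:ilae} in the standard way yields $\sup_{-M\le h\le\Delta_n}|R_n(h)|\to0$ in probability for every fixed $M>0$, and a diagonal argument then produces a sequence $M_n\to\infty$ with $M_n=o(n)$ and $\ep_n:=\sup_{-M_n\le h\le\Delta_n}|R_n(h)|\to0$ in $P_0$-probability. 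I expect this passage, from a sequential statement to one uniform over slowly growing compacta, to be the only genuine obstacle; it is, however, of a well-known type (compare the corresponding steps in \cite{LeCam90a}, \cite{Kleijn12}, \cite{Kleijn03} and \cite{Bickel12}).

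\emph{Step 2: tails, normaliser and assembly.} Put $E_n=[-M_n,M_n]\cap(-\infty,\Delta_n]$. Since $\Delta_n$ is bounded in probability (it converges weakly to $\Exp_{0,\gamma}$) and $M_n\to\infty$, one has $\NExp_{\Delta_n,\gamma}(E_n^c)\to0$ in probability; and since $q_n$ charges no mass on $\{h>\Delta_n\}$, condition \eqref{eq:cons} gives $\Pi_n(h\notin E_n\mid X_1,\ldots,X_n)\to0$ in probability, so that the truncated and full normalisers have ratio $\Pi_n(h\in E_n\mid X_1,\ldots,X_n)\to1$. It therefore suffices to evaluate the truncated integral: on $E_n$ one has $s_n(h)/s_n(0)=e^{h\gamma+R_n(h)}$ with $|R_n(h)|\le\ep_n\to0$ and $\pi(\tht_0+n^{-1}h)\to\pi(\tht_0)$ uniformly (continuity of $\pi$ at $\tht_0$ and $M_n/n\to0$), whence $\int_{E_n}\bigl(s_n(h)/s_n(0)\bigr)\pi(\tht_0+n^{-1}h)\,dh=\pi(\tht_0)\gamma^{-1}e^{\gamma\Delta_n}(1+o_{P_0}(1))$ (using $e^{-\gamma\Delta_n}=O_{P_0}(1)$ and $e^{-\gamma M_n}\to0$), and hence the full normaliser has the same value. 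Substituting it back, for $h\in E_n$ the factors $e^{h\gamma}$, $e^{\gamma\Delta_n}$ and $\gamma$ cancel, giving
\[
  \frac{q_n(h)}{\nu_n(h)}=\frac{e^{R_n(h)}\,\pi(\tht_0+n^{-1}h)}{\pi(\tht_0)\,(1+o_{P_0}(1))}=1+o_{P_0}(1)
\]
uniformly over $h\in E_n$. Finally, for every measurable $A$,
\[
  \bigl|\Pi_n(h\in A\mid X_1,\ldots,X_n)-\NExp_{\Delta_n,\gamma}(A)\bigr|
  \le\sup_{h\in E_n}\Bigl|\frac{q_n(h)}{\nu_n(h)}-1\Bigr|
     +\Pi_n(h\notin E_n\mid X_1,\ldots,X_n)+\NExp_{\Delta_n,\gamma}(E_n^c),
\]
and taking the supremum over $A$ and letting $n\to\infty$ yields \eqref{eq:iConv}.
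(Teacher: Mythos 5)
Your proposal is correct and proves the theorem. At the strategic level it is the same argument as the paper's — adapt the classical parametric Bernstein--von Mises proof with the LAN expansion replaced by the integrated LAE expansion \eqref{eq:ilae} and the Gaussian limit replaced by $\NExp_{\Delta_n,\gamma_{\tht_0,\eta_0}}$ — but the technical execution differs in a way worth noting. The paper follows the Le~Cam--Yang device: it conditions $\Pi_n(\cdot\,|\sample_n)$ and $\Xi_n=\NExp_{\Delta_n,\gamma}$ on a fixed compact $C$, introduces the two-variable ratio function $f_n(g,h)=\bigl(1-\tfrac{\xi_n(h)}{\xi_n(g)}\tfrac{s_n(g)}{s_n(h)}\tfrac{\pi_n(g)}{\pi_n(h)}\bigr)_+\1_{\{g\leq\Delta_n\}}\1_{\{h\leq\Delta_n\}}$, shows $\sup_{C\times C}f_n\to0$ in probability (upgrading the pointwise stochastic-sequence statement to a uniform one via continuity, exactly as in your near-maximiser step), bounds $\tfrac12\|\Pi_n^C-\Xi_n^C\|$ by that supremum via a Jensen-type argument, and finally traverses the compacta by invoking Lemmas~2.11 and~2.12 of \cite{Kleijn03}. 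The conditioning device lets the normalisers cancel in the ratio $s_n(g)/s_n(h)$, so the paper never computes the marginal evidence explicitly. You instead work directly with densities on a single slowly growing window $E_n=[-M_n,M_n]\cap(-\infty,\Delta_n]$, compute the normaliser $\int_{E_n}(s_n/s_n(0))\pi\,dh=\pi(\tht_0)\gamma^{-1}e^{\gamma\Delta_n}(1+o_{P_0}(1))$ explicitly (using $\Delta_n\geq0$, tightness of $\Delta_n$, $e^{-\gamma M_n}\to0$, and $M_n/n\to0$ so thickness of $\pi$ suffices), and conclude via a three-term triangle inequality. This buys a self-contained proof that does not rely on the auxiliary lemmas from \cite{Kleijn03} for the passage to the full line, at the modest cost of an explicit normaliser computation; the paper's route, conversely, avoids the normaliser but needs the conditional total-variation lemmas. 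One small caveat to make explicit in your write-up: measurability and boundedness in probability of the near-maximiser $h_n^*\in[-M,\Delta_n]$ rest on the a.s.\ continuity of $h\mapsto s_n(h)$ (measurable selection) and the tightness of $(\Delta_n)$, and positivity of $s_n$ on $(-\infty,\Delta_n]$ — needed for $R_n$ to be finite-valued and continuous — is the same implicit positivity the paper uses when it writes $d_n=\log s_n(\Delta_n)-\log s_n(0)-\Delta_n\gamma$.
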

Conditions \emph{(iii)} and \emph{(iv)} of Theorem~\ref{thm:LAESBvM} are
crucial in the derivation of the two theorems presented above. In the
following lemma we present
two sufficient conditions for both the domination and the one-sided 
contiguity condition to hold. The first method poses the domination
condition in slightly stronger form (see ``q-domination'' below); the second
relies on a log-Lipschitz condition for model densities and uniform
finiteness of exponential moments of the Lipschitz constant.
\begin{lemma}
\label{lem:DomLemma}
Suppose that the model satisfies at least one of the following two conditions:
\begin{itemize}
\item[(i)] (``$q$-domination'' condition)\\
  for every bounded, stochastic $(h_n)$, small enough $\rho > 0$,
  and some $q > 1$,
  \begin{equation}
    \label{eq:domination-q}
    \sup_{\eta \in D(\rho)} P^n_{\theta_0, \eta}
      \bigg(\prod_{i=1}^n \frac{p_{\theta_n(h_n),\eta}}{p_{\theta_0,\eta}}(X_i)
      \bigg)^q = O(1),
  \end{equation}
\item[(ii)] (log-Lipschitz condition)\\
  or, for all $\eta \in H$ there exists a measurable $m_{\theta_0,\eta}>0$ 
  such that for every $x \in A_{\theta_0,\eta}$ and for every $\theta$ in a 
  neighbourhood of $\theta_0$,
  \begin{equation}
    \label{eq:LogLipschitz}
    \frac{p_{\theta,\eta}}{p_{\theta_0,\eta}}(x)
    \leq e^{m_{\theta_0,\eta}(x)|\theta_0-\theta|},
  \end{equation}
  and for small enough $\rho > 0$ and all $K > 0$,
  $\sup_{\eta\in D(\rho)} P_{\theta_0,\eta}e^{K m_{\theta_0,\eta}} < \infty$.
\end{itemize}
Then, for fixed $\rho>0$ small enough,
\begin{itemize}
  \item[(i)] the model satisfies the domination condition
    \begin{equation*}
    \label{eq:dominationA}
      \sup_{\eta \in D(\rho)} P^n_{\theta_0, \eta}
      \bigg(\prod_{i=1}^n \frac{p_{\theta_n(h_n),\eta}}{p_{\theta_0,\eta}}(X_i)
      \bigg)= O(1),
    \end{equation*}
\item[(ii)] and, for every $\eta \in D(\rho)$, the $(P^n_{\theta_n(h_n),\eta})$
  is contiguous with respect to the $(P^n_{\theta_0,\eta})$.
\end{itemize}
\end{lemma}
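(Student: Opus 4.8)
The plan is to treat the two sufficient conditions separately, deriving the domination condition \eqref{eq:dominationA} and one-sided contiguity from each. The common thread is that the likelihood ratio product is an importance-sampling weight whose $P^n_{\theta_0,\eta}$-expectation is automatically bounded by $1$ \emph{on the event where it is finite}, i.e.\ on $\{h_n\leq\Delta_n\}$; the work is in showing that no mass ``escapes'' when this indicator is absent, uniformly over $\eta\in D(\rho)$ and over bounded stochastic $(h_n)$.

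First, under the $q$-domination condition \emph{(i)}, I would argue as follows. Write $Z_n(\eta,h_n)=\prod_{i=1}^n\frac{p_{\theta_n(h_n),\eta}}{p_{\theta_0,\eta}}(X_i)$. Since $Z_n$ is a nonnegative likelihood ratio, $P^n_{\theta_0,\eta}(Z_n\,\1_{A^n})\le 1$ where $A^n=\bigcap_i A_{\theta_n(h_n),\eta}$; but in general $P^n_{\theta_0,\eta}Z_n$ can exceed $1$ because the support of $p_{\theta_n(h_n),\eta}$ need not contain that of $p_{\theta_0,\eta}$. The point is that \eqref{eq:domination-q} with $q>1$ gives, via Jensen or simply monotonicity of $L^q$-norms, that $P^n_{\theta_0,\eta}Z_n\le (P^n_{\theta_0,\eta}Z_n^q)^{1/q}=O(1)$ uniformly over $\eta\in D(\rho)$; this is immediate and yields the domination condition \emph{(i)}. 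For one-sided contiguity \emph{(ii)}, I would invoke Le~Cam's first lemma: a sequence $(Q_n)$ is contiguous with respect to $(P_n)$ if the likelihood ratios $dQ_n/dP_n$ are uniformly integrable under $P_n$, and $L^q$-boundedness for $q>1$ of $dP^n_{\theta_n(h_n),\eta}/dP^n_{\theta_0,\eta}$ under $P^n_{\theta_0,\eta}$ — which is exactly \eqref{eq:domination-q} — implies uniform integrability, hence contiguity of $(P^n_{\theta_n(h_n),\eta})$ with respect to $(P^n_{\theta_0,\eta})$ for each fixed $\eta\in D(\rho)$.

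Second, under the log-Lipschitz condition \emph{(ii)}, the bound \eqref{eq:LogLipschitz} gives, on $A_{\theta_0,\eta}$,
\[
  Z_n(\eta,h_n)=\prod_{i=1}^n\frac{p_{\theta_n(h_n),\eta}}{p_{\theta_0,\eta}}(X_i)
  \le\exp\Bigl(|\theta_n(h_n)-\theta_0|\sum_{i=1}^n m_{\theta_0,\eta}(X_i)\Bigr)
  =\exp\Bigl(\tfrac{|h_n|}{n}\sum_{i=1}^n m_{\theta_0,\eta}(X_i)\Bigr).
\]
Since $(h_n)$ is bounded in probability, on an event of probability arbitrarily close to $1$ we have $|h_n|\le M$ for some $M$, so $Z_n(\eta,h_n)\le\exp\bigl(\tfrac{M}{n}\sum_i m_{\theta_0,\eta}(X_i)\bigr)$. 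Taking $P^n_{\theta_0,\eta}$-expectation and using independence, this is bounded by $\bigl(P_{\theta_0,\eta}e^{(M/n)m_{\theta_0,\eta}}\bigr)^n$; a standard expansion $e^{t}\le 1+t e^{t}$ together with the uniform exponential-moment hypothesis $\sup_{\eta\in D(\rho)}P_{\theta_0,\eta}e^{Km_{\theta_0,\eta}}<\infty$ (applied with any fixed $K>0$) yields $P_{\theta_0,\eta}e^{(M/n)m_{\theta_0,\eta}}\le 1+O(1/n)$ uniformly in $\eta\in D(\rho)$, so the $n$-th power is $O(1)$, giving domination \emph{(i)}. For contiguity \emph{(ii)}, the same per-observation exponential bound shows that $Z_n(\eta,h_n)$ is bounded above by a product of i.i.d.\ factors each of the form $e^{(M/n)m_{\theta_0,\eta}(X_i)}$ with finite mean; a routine truncation argument (splitting $m_{\theta_0,\eta}$ at a level $R$ and letting $R\to\infty$ after $n\to\infty$) establishes uniform integrability of $Z_n$ under $P^n_{\theta_0,\eta}$, and Le~Cam's first lemma again delivers contiguity.

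The main obstacle is the uniformity over $\eta\in D(\rho)$ combined with the stochastic nature of $(h_n)$. In the $q$-domination branch this uniformity is simply assumed, so there the only care needed is the measure-theoretic handling of the stochastic perturbation (conditioning on the value of $h_n$, or passing to the subsequence on which $|h_n|\le M$). In the log-Lipschitz branch the delicate point is that one needs the exponential moment bound to hold uniformly in $\eta$ for \emph{all} $K>0$, because the effective exponent $M/n$ — while shrinking — multiplies a sum of $n$ terms; the bookkeeping that converts ``$\sup_\eta P_{\theta_0,\eta}e^{Km_{\theta_0,\eta}}<\infty$ for all $K$'' into ``$\bigl(\sup_\eta P_{\theta_0,\eta}e^{(M/n)m_{\theta_0,\eta}}\bigr)^n=O(1)$'' must be done carefully, but is elementary once the expansion $e^{t}-1\le t e^{t}$ and a fixed reference $K$ (say $K=M$) are in hand. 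I do not expect either branch to require anything beyond Le~Cam's first lemma and these elementary inequalities.
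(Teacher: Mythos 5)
Your plan is correct and follows essentially the same route as the paper: the same two-branch split, the same log-Lipschitz reduction $\prod_i p_{\theta_n(h_n),\eta}/p_{\theta_0,\eta}(X_i)\le\exp\bigl(\tfrac{M}{n}\sum_i m_{\theta_0,\eta}(X_i)\bigr)$, and the same exploitation of $L^q$-boundedness to get contiguity. The only real differences are cosmetic and concern the intermediate inequalities. For the domination bound under log-Lipschitz, the paper avoids your Taylor bookkeeping ($e^t\le 1+te^t$ plus $(1+c/n)^n=O(1)$) by a single application of Jensen's inequality: $\bigl(P_{\theta_0,\eta}e^{(M/n)m_{\theta_0,\eta}}\bigr)^n\le P_{\theta_0,\eta}e^{Mm_{\theta_0,\eta}}$, which is uniformly bounded by hypothesis. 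For contiguity, rather than abstractly invoking uniform integrability and Le~Cam's first lemma (where, in principle, one must also dispose of a singular part, which is moot here only because $P^n_{\theta_n(h_n),\eta}f$ is \emph{defined} for stochastic $h_n$ as a likelihood-ratio integral against $P^n_{\theta_0,\eta}$), the paper just runs H\"older directly: if $P^n_{\theta_0,\eta}(F_n)\to0$ then $P^n_{\theta_n(h_n),\eta}(F_n)\le\bigl(P^n_{\theta_0,\eta}Z_n^q\bigr)^{1/q}\,P^n_{\theta_0,\eta}(F_n)^{1/p}\to0$, with the $L^q$-moment controlled by hypothesis in branch~(i) and by the same Jensen step (applied to $qMm_{\theta_0,\eta}/n$) in branch~(ii). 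Your ``routine truncation argument'' for UI in the log-Lipschitz branch, if carried out, would end up reproducing exactly this $L^q$-moment bound, so nothing is gained by the detour. One small clean-up: the hypothesis is stated for \emph{bounded} stochastic $(h_n)$, i.e.\ $|h_n|\le M$ almost surely, so there is no need to restrict to a ``good event'' of high probability as you do before applying the log-Lipschitz bound.
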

The log-Lipschitz version of this lemma is used in both examples
of Section~\ref{sec:Examples} to satisfy conditions \emph{(iii)}
and \emph{(iv)} of Theorem~\ref{thm:LAESBvM}.

\subsection{Marginal posterior convergence at $n^{-1}$-rate}
\label{sub:Marginal}

One of the conditions in the main theorem is marginal consistency at
rate $n^{-1}$, so that the posterior measure of a sequence of model subsets
of the form
\[
  \Theta_n \times H
    = \{(\theta, \eta) \in \Theta \times H : n|\theta-\theta_0|\leq M_n\},
\]
converge to one in $P_0$-probability, for every sequence $(M_n)$ such
that $M_n \to \infty$.
Marginal (semiparametric) posteriors have
not been studied extensively or systematically in the literature. As a
result fundamental questions (\eg\ semiparametric bias) concerning
\emph{marginal} posterior consistency have not yet received the attention
they deserve. Here, we present a straightforward formulation of sufficient
conditions, based solely on bounded likelihood ratios.
This has the advantage of leaving the nuisance prior completely unrestricted
but may prove to be too stringent a condition on the model in some
applications. Conceivably \cite{Castillo12b}, the nuisance prior has a much
more significant role to play in questions on marginal consistency. The
inadequacy of Lemma~\ref{lem:MarginalLR} manifests itself primarily
through the occurrence of a supremum over the nuisance space $H$ in 
condition~(\ref{eq:Condition}), a uniformity that is too coarse. It can be 
refined somewhat by requiring uniform bound on the likelihood ratios 
on a sequence of model subsets, capturing the most of the full 
nonparametric posterior mass. Reservations aside, it appears
from the examples of Section~\ref{sec:Examples} that the lemma is also
useful in the form stated.
\begin{lemma}
\label{lem:MarginalLR}
Let the sequence of maps $\theta \mapsto S_n(\theta)$ be $P_0$-almost 
surely continuous on $(-\infty,\Delta_n]$ and exhibit the stochastic 
integral LAE property. Furthermore, assume that there exists a constant 
$C > 0$ such that for any $(M_n)$, $M_n \to \infty$, $M_n \leq n$ for $n \geq 1$, 
and $M_n = o(n)$,
\begin{equation}
\label{eq:Condition}
	P_0^n\biggl(\sup_{\eta\in H}\sup_{\tht\in\Tht_n^c}
     \PP_n\log\frac{p_{\tht,\eta}}{p_{\tht_0,\eta}} 
     \leq - \frac{CM_n}{n}\biggr)\to 1.
\end{equation}
Then, for any nuisance prior $\Pi_H$ and $\Pi_\Theta$ that is thick at
$\theta_0$,
\[
  \Pi_n\bigl(\,n|\theta-\theta_0|>M_n \bigm|X_1,\ldots,X_n\,\bigr)
    \convprob{P_0} 0,
\]
for any $(M_n)$, $M_n \to \infty$.
\end{lemma}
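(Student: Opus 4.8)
The plan is to write the marginal posterior for $\theta$ as a ratio of integrated likelihoods and to show the numerator is a vanishing multiple of the denominator on events of probability tending to one. Since $\{n|\theta-\theta_0|>M_n\}$ shrinks as $M_n$ grows, replacing $M_n$ by $M_n\wedge\sqrt n$ we may and do assume $M_n\to\infty$, $M_n\le n$, $M_n=o(n)$. As $\Pi=\Pi_\Theta\times\Pi_H$ is a product and the event in question is of the form $\Theta_n^c\times H$ with $\Theta_n=\{\theta:n|\theta-\theta_0|\le M_n\}$, \eqref{eq:posterior} and \eqref{eq:IL} give
\[
  \Pi_n\bigl(n|\theta-\theta_0|>M_n\bigm|X_1,\ldots,X_n\bigr)=\frac{N_n}{D_n},\qquad
  N_n=\int_{\Theta_n^c}S_n\,d\Pi_\Theta,\quad D_n=\int_\Theta S_n\,d\Pi_\Theta .
\]
It then suffices to produce a deterministic $\epsilon_n\downarrow0$ and events of probability $\to1$ on which $N_n\le\epsilon_n D_n$; note that the (random) common factor $s_n(0)=S_n(\theta_0)$ will cancel from $N_n/D_n$, so its size is irrelevant.

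For the denominator I would localise $h=n(\theta-\theta_0)$, keep only $h\in[-1,-\tfrac12]$, and use that by thickness $d\Pi_\Theta/d\mathrm{Leb}\ge\tfrac12\pi(\theta_0)>0$ on a fixed neighbourhood of $\theta_0$, so that $D_n\ge(c_0/n)\int_{-1}^{-1/2}s_n(h)\,dh$ for large $n$. On the event $\{\Delta_n\ge-\tfrac12\}$, which has probability $\to1$ because $\Delta_n$ converges weakly to an exponential law on $[0,\infty)$, the map $h\mapsto\log(s_n(h)/s_n(0))-h\gamma_{\theta_0,\eta_0}$ is continuous on the compact interval $[-1,-\tfrac12]\subset(-\infty,\Delta_n]$, hence attains its minimum at some measurable $h_n^\star$; since $(h_n^\star)$ is bounded, the stochastic integrated LAE expansion \eqref{eq:ilae} forces that minimum to be $o_{P_0}(1)$, whence $\inf_{[-1,-1/2]}s_n(h)\ge\tfrac12 e^{-\gamma_{\theta_0,\eta_0}}s_n(0)$ with probability $\to1$ and therefore $D_n\ge(c_1/n)\,s_n(0)$ on an event of probability $\to1$, for a fixed $c_1>0$.

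For the numerator I would write $p_{\theta,\eta}/p_{\theta_0,\eta_0}=(p_{\theta,\eta}/p_{\theta_0,\eta})(p_{\theta_0,\eta}/p_{\theta_0,\eta_0})$, so that $S_n(\theta)=\int_H\prod_{i=1}^n\frac{p_{\theta,\eta}}{p_{\theta_0,\eta}}(X_i)\,\prod_{i=1}^n\frac{p_{\theta_0,\eta}}{p_{\theta_0,\eta_0}}(X_i)\,d\Pi_H(\eta)$. On the event in \eqref{eq:Condition} the first product equals $\exp(n\PP_n\log\frac{p_{\theta,\eta}}{p_{\theta_0,\eta}})\le e^{-CM_n}$ uniformly over $\theta\in\Theta_n^c$ and $\eta\in H$, giving $S_n(\theta)\le e^{-CM_n}s_n(0)$ there; but the resulting $N_n\le e^{-CM_n}s_n(0)$ only beats $D_n\asymp n^{-1}s_n(0)$ when $M_n\gg\log n$. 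To recover the missing $n^{-1}$ I would invoke \eqref{eq:Condition} along the whole range of dyadic scales $2^jM_n$, $j\ge0$: since $t\mapsto\sup_{\eta}\sup_{n|\theta-\theta_0|>t}\PP_n\log\frac{p_{\theta,\eta}}{p_{\theta_0,\eta}}$ is non-increasing, the dyadic bounds combine into $\PP_n\log\frac{p_{\theta,\eta}}{p_{\theta_0,\eta}}\le-\tfrac C2|\theta-\theta_0|$ uniformly over $\theta\in\Theta_n^c$, $\eta\in H$, hence $S_n(\theta)\le e^{-(C/2)n|\theta-\theta_0|}s_n(0)$. Integrating this against $\Pi_\Theta$, using boundedness of the thick density $\pi$ on a fixed neighbourhood of $\theta_0$ for $|\theta-\theta_0|$ small and a crude exponential bound (from \eqref{eq:Condition} at a moderately fast scale, e.g.\ $\lfloor n/\log n\rfloor$) for $|\theta-\theta_0|$ bounded away from $0$, yields $N_n\le(C_2/n)\,e^{-(C/2)M_n}s_n(0)$ on an event of probability $\to1$. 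Combined with the denominator bound this gives $N_n/D_n\le(C_2/c_1)e^{-(C/2)M_n}\to0$, which is the assertion.

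The main obstacle is the numerator estimate, specifically extracting the factor $n^{-1}$: the single-scale use of \eqref{eq:Condition} is off by exactly a factor $n$, and one must bring in \eqref{eq:Condition} at a genuine range of scales so that $\int_{\Theta_n^c}S_n\,d\Pi_\Theta$ picks up the Lebesgue width $\sim M_n/n$ of a thin shell around $\theta_0$ rather than the $O(1)$ total mass of $\Theta_n^c$. Since the hypothesis only supplies, for each sequence of scales separately, an event of probability $\to1$ with no uniform rate, the honest route is to truncate the dyadic family at a level $K_n\to\infty$ chosen slowly enough that the union bound over the $K_n$ events still tends to one, and to dispatch the remaining tail $\{n|\theta-\theta_0|>2^{K_n}M_n\}$ by a single further application of \eqref{eq:Condition}; making this bookkeeping precise (or, equivalently, reading \eqref{eq:Condition} as a single high-probability statement and exploiting the monotonicity above directly) is where the real effort sits. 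By contrast the denominator bound is a routine consequence of the integrated LAE expansion and thickness of $\Pi_\Theta$, and the final division is immediate once the common $s_n(0)$ cancels.
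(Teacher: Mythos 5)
Your route is genuinely different from the paper's, and the difference is exactly at the point you flag as the hard part. The paper does \emph{not} try to match the sharp order $n^{-1}$ of the denominator. Instead it invokes Lemma~\ref{lem:MarginalDenominator} with the \emph{adaptive} choice $a_n=e^{-CM_n/2}$ (which vanishes since $M_n\to\infty$) to obtain the much weaker lower bound $D_n\ge e^{-CM_n/2}\,s_n(0)$ on an event $G_n$ of probability tending to one. Once that is in place, the single-scale numerator bound from $F_n$, namely $N_n\le e^{-CM_n}\,s_n(0)$ (obtained exactly as in your first numerator paragraph: factor the likelihood as $(p_{\theta,\eta}/p_{\theta_0,\eta})(p_{\theta_0,\eta}/p_{\theta_0,\eta_0})$, use \eqref{eq:Condition}, integrate out $\eta$ to reproduce $s_n(0)$, and use $\Pi_\Theta(\Theta_n^c)\le1$), is already enough: on $F_n\cap G_n$ one reads off $N_n/D_n\le e^{CM_n/2}\cdot e^{-CM_n}=e^{-CM_n/2}\to0$, and no multi-scale machinery is needed. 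In other words, the paper trades a sharp denominator bound for a soft one deliberately calibrated to offset the numerator, whereas you insist on the sharp $D_n\gtrsim n^{-1}s_n(0)$ and are then forced to recover the missing $n^{-1}$ from the numerator.

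The concrete gap in your writeup is then twofold. First, you do not use (or mention) Lemma~\ref{lem:MarginalDenominator} at all, which is precisely the device the paper introduces to avoid the problem you spend most of your argument working around. Second, the compensating multi-scale numerator estimate is only outlined, not proved: you acknowledge that applying \eqref{eq:Condition} at each dyadic scale $2^jM_n$ gives, for each fixed $j$, an event of probability tending to one with no uniform rate; that a truncation level $K_n\to\infty$ must be chosen slowly enough for the union bound to work \emph{and} so that $2^{K_n}M_n=o(n)$ remains admissible in \eqref{eq:Condition}; and that the passage from dyadic bounds to the continuous envelope $\PP_n\log(p_{\theta,\eta}/p_{\theta_0,\eta})\le-\tfrac{C}{2}|\theta-\theta_0|$ and the subsequent integration against $\pi_\Theta$ (which is only \emph{locally} bounded by thickness) have to be made rigorous. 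None of this bookkeeping is carried out, and you yourself describe it as ``where the real effort sits.'' As it stands the proposal establishes the claim only along your single-scale argument, i.e.\ for sequences with $M_n\gg\log n$, which does not suffice since the hard cases for the lemma's conclusion are the slowly diverging $M_n$ (consistency at rate $M_n$ implies consistency only at \emph{faster} rates $M_n'\ge M_n$). The outline of your fix is plausible and would be a valid alternative proof if completed, but to match the paper's economy you should instead recognize that the denominator bound may be taken to be as weak as $e^{-CM_n/2}s_n(0)$, and cite Lemma~\ref{lem:MarginalDenominator} for it.
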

\begin{proof}
Let us first note, that if marginal consistency holds for a sequence
$M_n$, then it also holds for any sequence $M'_n$ that diverges faster
(\ie\ if $M_n=O(M'_n)$). Without loss of generality, we therefore assume
that $M_n$ diverges more slowly than $n$, \ie\ $M_n = o(n)$. We can also 
assume $M_n \leq n$ for $n \geq 1$. Define $F_n$ 
to be the events in (\ref{eq:Condition}) so that $P_0^n(F_n^c) = o(1)$ 
by assumption. In addition, let
\[
  G_n = \biggl\{(X_1, \ldots, X_n) : 
    \int_\Theta S_n(\tht)\, d\Pi_\Tht(\tht) \geq e^{-CM_n/2}S_n(\tht_0)
  \biggr\}.
\]
By Lemma~\ref{lem:MarginalDenominator}, $P^n_0(G^c_n) = o(1)$ as well. Hence,
\begin{align*}
    P^n_0&\Pi_n\bigl(n|\theta-\theta_0|>M_n\bigm|X_1,\ldots,X_n\bigr)\\[2mm]
      &\leq P^n_0\Pi_n\bigl(n|\theta-\theta_0|>M_n\bigm|\sample_n\bigr)
        \1_{F_n\cap G_n}(\sample_n)+o(1)\\
      &\leq e^{CM_n/2}P_0^n\biggl(\frac{1}{S_n(\tht_0)}\!\int_H\!\!\int_{\Tht_n^c}\!
         \prod_{i=1}^n\frac{p_{\tht,\eta}}{p_{\tht_0,\eta}}(X_i) 
         \prod_{i=1}^n\frac{p_{\tht_0,\eta}}{p_{\tht_0,\eta_0}}(X_i)
         \,d\Pi_\Tht\, d\Pi_H\, 1_{F_n}(\sample_n)\biggr)\\
      &\qquad\qquad{+} o(1). 
%
\end{align*}
On the events $F_n$ we have
\begin{align*}
\int_H\!\!\int_{\Tht_n^c}\!
         &\prod_{i=1}^n\frac{p_{\tht,\eta}}{p_{\tht_0,\eta}}(X_i)
         \prod_{i=1}^n\frac{p_{\tht_0,\eta}}{p_{\tht_0,\eta_0}}(X_i)
         \,d\Pi_\Tht\, d\Pi_H\\
         & = \int_H\prod_{i=1}^n\frac{p_{\tht_0,\eta}}{p_{\tht_0,\eta_0}}(X_i)\int_{\Tht_n^c}\!
         \exp\Bigl(n\PP_n\log\frac{p_{\tht,\eta}}{p_{\tht_0,\eta}}\Bigr)
         \,d\Pi_\Tht\, d\Pi_H\\
         &\leq \int_H\prod_{i=1}^n\frac{p_{\tht_0,\eta}}{p_{\tht_0,\eta_0}}(X_i)\, d\Pi_H 
         \sup_{\eta\in H}\sup_{\tht\in\Tht_n^C}
         \exp\Bigl(n\PP_n\log\frac{p_{\tht,\eta}}{p_{\tht_0,\eta}}\Bigr)\\
         &\leq S_n(\tht_0)\exp\Bigl(\sup_{\eta\in H}\sup_{\tht\in\Tht_n^C}
         n\PP_n\log\frac{p_{\tht,\eta}}{p_{\tht_0,\eta}}\Bigr),
\end{align*}
which ultimately proves marginal consistency at rate $n^{-1}$.
\end{proof}
In the proof of Lemma~\ref{lem:MarginalLR} the lower bound for the
denominator of the marginal posterior comes from the following lemma.
(Let $\Pi_n$ denote the prior $\Pi_{\Tht}$ in the local parametrization
in terms of $h=n(\tht-\tht_0)$.)

\begin{lemma}
\label{lem:MarginalDenominator}
Let the sequence of maps $\theta \mapsto s_n(\theta)$ exhibit the LAE
property of (\ref{eq:ilae}). Assume that the prior $\Pi_\Theta$ is
thick at $\theta_0$ (and denoted by $\Pi_n$ in the local parametrization
in terms of $h$). Then
\[
  P^n_0\Bigl(\int s_n(h)\, d\Pi_n(h) < a_n s_n(0)\Bigr) \to 0,
\]
for every sequence $(a_n)$, $a_n \downarrow 0$.
\end{lemma}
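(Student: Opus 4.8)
The strategy is to establish the slightly stronger statement that there is a fixed constant $c>0$ with $P^n_0\bigl(\int s_n(h)\,d\Pi_n(h)\ge c\,s_n(0)\bigr)\to1$. Since $a_n\downarrow0$, for $n$ large we have $a_n<c$, hence $\{\int s_n\,d\Pi_n<a_n s_n(0)\}\subset\{\int s_n\,d\Pi_n<c\,s_n(0)\}$ and the assertion follows. On $\{s_n(0)=0\}$ the event $\{\int s_n\,d\Pi_n<a_n s_n(0)\}$ is empty because the integrand is nonnegative, so that set may be discarded; on $\{s_n(0)>0\}$ write $\mu_n(h)=s_n(h)/s_n(0)$. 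Next I localise: fix the compact interval $J=[-1,-\tfrac12]$. Since $\Pi_\Tht$ is thick at $\tht_0$, its Lebesgue density $\pi$ is continuous and strictly positive at $\tht_0$, so the local prior $\Pi_n$ has near $h=0$ a Lebesgue density converging to $\pi(\tht_0)>0$; in particular $\Pi_n(B)\ge\tfrac12\pi(\tht_0)\,|B|$ for every Borel $B\subset J$ and $n$ large, $|\cdot|$ denoting Lebesgue measure. Consequently $\int s_n(h)\,d\Pi_n(h)\ge s_n(0)\,\tfrac12\pi(\tht_0)\int_J\mu_n(h)\,dh$, and it remains to bound $\int_J\mu_n(h)\,dh$ away from $0$ in $P^n_0$-probability.

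The main obstacle is exactly this last step. For each fixed $h\in J$, applying the integrated LAE expansion (\ref{eq:ilae}) to the constant sequence $h_n\equiv h$, together with the weak convergence of $\Delta_n$ to $\Exp_{0,\gamma_{\tht_0,\eta_0}}$ (which gives $\1_{\{h\le\Delta_n\}}\convprob{P_0}1$ since $h<0$), yields $\mu_n(h)\convprob{P_0}e^{h\gamma_{\tht_0,\eta_0}}\ge e^{-\gamma_{\tht_0,\eta_0}}>0$. This convergence is, however, not uniform over $h\in J$, and it cannot be integrated directly because $\mu_n(h)$ need not be uniformly integrable (the normalisation by $s_n(0)$ precludes a naive Fubini bound on $P^n_0\mu_n(h)$). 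Since only a \emph{lower} bound on $\int_J\mu_n(h)\,dh$ is needed, I would circumvent this with a Markov argument that uses only bounded indicators. Put $t_0=\tfrac12 e^{-\gamma_{\tht_0,\eta_0}}$. If $\int_J\mu_n(h)\,dh<t_0/4$, then Markov's inequality in the variable $h$ gives $|\{h\in J:\mu_n(h)\ge t_0\}|\le\tfrac14$, hence $|\{h\in J:\mu_n(h)<t_0\}|\ge|J|-\tfrac14=\tfrac14$. Therefore, by Tonelli and dominated convergence (the integrand is bounded by $1$ and tends to $0$ pointwise in $h$),
\[
  P^n_0\Bigl(\int_J\mu_n(h)\,dh<t_0/4\Bigr)
    \le P^n_0\Bigl(\bigl|\{h\in J:\mu_n(h)<t_0\}\bigr|\ge\tfrac14\Bigr)
    \le 4\int_J P^n_0\bigl(\mu_n(h)<t_0\bigr)\,dh\longrightarrow0 .
\]

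Combining the two displays, on an event of $P^n_0$-probability tending to one we obtain $\int s_n(h)\,d\Pi_n(h)\ge\tfrac18\pi(\tht_0)e^{-\gamma_{\tht_0,\eta_0}}\,s_n(0)$, which is the required statement with $c=\tfrac18\pi(\tht_0)e^{-\gamma_{\tht_0,\eta_0}}$. I note that this argument uses measurability but not continuity of $h\mapsto s_n(h)$, and that the only properties of model and prior invoked are thickness of $\Pi_\Tht$, the expansion (\ref{eq:ilae}) evaluated at constant sequences, and the weak convergence of $\Delta_n$.
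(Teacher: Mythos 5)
Your proof is correct, but it takes a genuinely different route from the paper's. The paper's argument relies on the (separately assumed, in Lemma~\ref{lem:MarginalLR}) continuity of $h\mapsto s_n(h)$ to upgrade the pointwise stochastic LAE expansion to a \emph{uniform} one over the compact set $C=[-M,0]$; it then introduces events $B_n=\{\sup_C|R_n|\leq K_n/2\}$ with $P^n_0(B_n)\to1$, on which a deterministic lower bound $\int_C(s_n(h)/s_n(0))\,dh\geq e^{-K_n/2}\int_C e^{h\gamma_{\tht_0,\eta_0}}\,dh$ holds, and chooses $K_n=-\log a_n$. You instead avoid the uniformity issue entirely: you need only the pointwise (in $h$) expansion at constant sequences, and convert the pointwise $P^n_0(\mu_n(h)<t_0)\to0$ into control of $\int_J\mu_n$ via two Markov inequalities and Tonelli applied to the bounded integrand $\1\{\mu_n(h)<t_0\}$. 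What this buys is that continuity of $h\mapsto s_n(h)$ is not needed (only joint measurability), so your lemma holds under strictly weaker hypotheses; what it costs is a somewhat less transparent constant and a small amount of extra bookkeeping. One remark on your invocation of $\Delta_n\convweak{}\Exp_{0,\gamma_{\tht_0,\eta_0}}$: that fact is not literally among the stated hypotheses of the lemma (which only assumes the expansion~(\ref{eq:ilae})), but you can recover what you need from (\ref{eq:ilae}) itself, since applying it at $h_n\equiv0$ yields $1=e^{o_{P_0}(1)}\1_{\{0\leq\Delta_n\}}$, which forces $P^n_0(\Delta_n<0)\to0$ and hence $P^n_0(\Delta_n<h)\to0$ for every $h<0$. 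Your interpretation of $\Pi_n$, with local density converging to $\pi(\theta_0)$ rather than to zero, matches the one used implicitly in the paper's proof (``$\inf_{h\in C}d\Pi_n/dh\geq\pi$''), so the two proofs are on the same footing there.
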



\section{Proofs}
\label{sec:proofs}

In this section, several longer proofs of theorems and lemmas in the main
text have been collected.

\subsection{Proof of Lemma~\ref{lem:GGVlike}}


\begin{proof} (of Lemma~\ref{lem:GGVlike})\\
Let $C>0$, $\rho>0$, and $n\geq1$ be given. If $\Pi_H(K_n(\rho,M))=0$,
the assertion holds trivially, so we assume $\Pi_H(K_n(\rho,M))>0$ without
loss of generality and consider the conditional prior
$\Pi_n(A) =\Pi_H(A|K_n(\rho, M))$ (for measurable $A\subset H$). Since,
\[
  \int_H\prod_{i=1}^n\frac{p_{\theta_n(h_n),\eta}}{p_0}(X_i)\,d\Pi_H(\eta)
    \geq \Pi_H(K_n(\rho,M))
    \int\prod_{i=1}^n\frac{p_{\theta_n(h_n),\eta}}{p_0}(X_i) \, d\Pi_n(\eta),
\]
we may choose to consider only the neighbourhoods $K_n$. Restricting
attention to the event $\{h_n\leq\Delta_n\}$, we obtain,
\[
  \begin{split}
  \log\int&\prod_{i=1}^n\frac{p_{\theta_n(h_n),\eta}}{p_0}(X_i)\,d\Pi_n(\eta)
    \geq \int n\PP_n\log\1_{A_{\theta_n(h_n),\eta}}
      \frac{p_{\theta_n(h_n),\eta}}{p_0}\,d\Pi_n(\eta)\\
    &\geq \int\inf_{|h|\leq M}n\PP_n\1_{A_{\theta_n(h),\eta}}
      \log\frac{p_{\theta_n(h),\eta}}{p_0}\,d\Pi_n(\eta)
    \geq \int n\PP_n\inf_{|h|\leq M}\1_{A_{\theta_n(h),\eta}}
      \log \frac{p_{\theta_n(h),\eta}}{p_0}\, d\Pi_n(\eta)\\
    &\geq\sqrt{n}\int -\GG_n\biggl(\sup_{|h|\leq M}-\1_{A_{\theta_n(h),\eta}}
      \log\frac{p_{\theta_n(h),\eta}}{p_0}\biggr)\,d\Pi_n(\eta)
      -n \rho^2,\\
  \end{split}
\]
using the definition of $K_n$ in the last step (see (\ref{eq:KLngb})). Then,
\[
  \begin{split}
    P^n_0\biggl(\biggl\{\int&\prod_{i=1}^n
      \frac{p_{\theta_n(h_n),\eta}}{p_0}(X_i)\,d\Pi_n(\eta)
        <e^{-(1+C)n\rho^2}\biggr\}\cap\{h_n\leq \Delta_n\} \biggr)\\
    &\leq P^n_0\bigg(\int -\GG_n\biggl(\sup_{|h|\leq M}-\1_{A_{\theta_n(h),\eta}}
      \log\frac{p_{\theta_n(h),\eta}}{p_0}\biggr)\,d\Pi_n(\eta)
        < -\sqrt{n}C\rho^2\bigg).
  \end{split}
\]
By Chebyshev's inequality, Jensen's inequality, Fubini's theorem and the
fact that for any $P_0$-square-integrable random variables
$Z_n$, $P^n_0(\GG_nZ_n)^2 \leq P^n_0Z_n^2$,
\[
  \begin{split}
  P^n_0\biggl(\int&-\GG_n\biggl(\sup_{|h|\leq M}-\1_{A_{\theta_n(h),\eta}}
    \log\frac{p_{\theta_n(h),\eta}}{p_0}\biggr)\,d\Pi_n(\eta)
      <-\sqrt{n}C\rho^2\biggr)\\
  &\leq\frac{1}{nC^2\rho^4}\int P^n_0
    \bigg(\GG_n\sup_{|h|\leq M}-\1_{A_{\theta_n(h),\eta}}
    \log \frac{p_{\theta_n(h),\eta}}{p_0}\bigg)^2
  \,d\Pi_n(\eta) \leq \frac{1}{nC^2\rho^2},
  \end{split}
\]
where the last step follows again from definition (\ref{eq:KLngb}).
\end{proof}

\subsection{Proof of Theorems~\ref{thm:S-ILAE} and~\ref{thm:PostLAE}, 
  and Lemmas~\ref{lem:DomLemma} and~\ref{lem:MarginalDenominator}}

\begin{proof}(of Theorem~\ref{thm:S-ILAE})\\
Let $(h_n)$ be bounded in $P_0$-probability. Throughout this proof we
write $\tht_n(h_n) = \tht_0 + n^{-1}h_n$. Let $\delta,\ep>0$ be given.
There exists a constant $M>0$ such that $P^n_0(|h_n|>M)<\delta/2$ for
all $n\geq1$. By the consistency assumption, for large enough $n$, 
\[
  P^n_0\Bigl(\,\log\Pi_n\bigl(\,D(\rho_n)\,\bigm|\,
    \tht=\tht_n;X_1,\ldots,X_n\,\bigr)\geq-\ep\,\Bigr)>1-\frac{\delta}{2}.
\] 
This implies that the posterior's numerator and denominator are
related through,
\[
  P^n_0\bigg(
    \int_H\prod_{i=1}^n\frac{p_{\theta_n(h_n),\eta}}{p_0}(X_i)\,d\Pi_H(\eta)
    \leq e^\ep \1_{\{|h_n|\leq M\}}
      \int_{D(\rho_n)}\prod_{i=1}^n\frac{p_{\theta_n(h_n),\eta}}{p_0}(X_i)
      \,d\Pi_H(\eta) \bigg)
  >1-\delta,
\]
for this $M$ and all $n$ large enough. We continue with the integral
over $D(\rho_n)$ under the restriction $|h_n|\leq M$. By stochastic
local asymptotic exponentiality for every fixed $\eta$, we have,
\[
  \prod_{i=1}^n\frac{p_{\theta_n(h_n),\eta}}{p_0}(X_i) 
    = \prod_{i=1}^n\frac{p_{\theta_0,\eta}}{p_0}(X_i)\,
      \exp(h_n\gamma_{\tht_0,\eta} +R_n(h_n,\eta;\underline{X}_n)),
\]
where the rest-term $R_n(h_n,\eta;\underline{X}_n)$ converges to zero
in $P_{\theta_0,\eta}$-probability. Define for all $\ep > 0$ the events,
\[
  F_n(\eta,\ep)=\Bigl\{\sample_n:\sup_{|h|\leq M}|h\gamma_{\tht_0,\eta}
    -h\gamma_{\tht_0,\eta_0}|\leq\ep\Bigr\},
\]
and note that $F^c_n(0,\ep) = \emptyset$. With the domination condition
\emph{(iii)} of Theorem~\ref{thm:LAESBvM}, Fatou's lemma yields:
\[
  \begin{split}
    \limsup_{n\to\infty}\int_{D(\rho_n)}
      &P^n_{\theta_n(h_n),\eta}\bigl(F_n^c(\eta,\ep)\bigr)\, d\Pi_H(\eta)\\
      &\leq\int\limsup_{n\to\infty}\1_{D(\rho_n)\setminus\{0\}}
        P^n_{\theta_n(h_n),\eta}\bigl(F_n^c(\eta,\ep)\bigr)\,d\Pi_H(\eta)=0.
  \end{split}
\]
Combined with Fubini's theorem, this suffices to conclude that
\begin{equation}
  \label{eq:Fbound}
  \int_{D(\rho_n)}\prod_{i=1}^n\frac{p_{\theta_n(h_n),\eta}}{p_0}(X_i)
    \,d\Pi_H(\eta)
  =\int_{D(\rho_n)}\prod_{i=1}^n\frac{p_{\theta_n(h_n),\eta}}{p_0}(X_i)
    \1_{F_n(\eta,\ep)}(\sample_n)\,d\Pi_H(\eta)+o_{P_0}(1),
\end{equation}
and we continue with the first term on the \rhs. For every $\eta\in H$,
define the events,
\[
  G_n(\eta,\ep)=\Bigr\{\sample_n:
    \sup_{|h|\leq M}|R_n(h,\eta;\sample_n)|\leq\ep/2\Bigr\},
\] 
and note that $P^n_{\theta_0,\eta}(G^c_n(\eta,\ep))\to0$. By the
contiguity condition \emph{(iv)} of Theorem~\ref{thm:LAESBvM},
the probabilities $P^n_{\theta_n(h_n),\eta}(G^c_n(\eta,\ep))$ converge to zero
as well. Reasoning as with the events $F_n(\eta,\ep)$, we conclude that,
\[
  \begin{split}
    \int_{D(\rho_n)}&\prod_{i=1}^n \frac{p_{\theta_n(h_n),\eta}}{p_0}(X_i)
      \1_{F_n(\eta,\ep)}(\sample_n)\,d\Pi_H(\eta)\\
    &=\int_{D(\rho_n)}\prod_{i=1}^n\frac{p_{\theta_n(h_n),\eta}}{p_0}(X_i)
      \1_{G_n(\eta,\ep)\cap F_n(\eta,\ep)}(\sample_n)\,d\Pi_H(\eta)+o_{P_0}(1).
  \end{split}
\]
For fixed $n$ and $\eta$ and for all $\sample_n\in G_n(\eta,\ep)\cap
F_n(\eta,\ep)$, and by stochastic local asymptotic exponentiality,
\[
  \begin{split}
  \Biggl|\,\,\log\prod_{i=1}^n\frac{p_{\tht_n(h_n),\eta}}{p_0}(X_i)
    &- \log\prod_{i=1}^n\frac{p_{\theta_0,\eta}}{p_0}(X_i)
    -h_n\gamma_{\tht_0,\eta_0}\,\,\Biggr| \\
  &\leq \bigl|R_n(h_n,\eta;\sample_n)\bigr|
    +\bigl|h_n(\gamma_{\tht_0,\eta_0}-\gamma_{\tht_0,\eta})\bigr|\leq 2\ep,
  \end{split}
\]
from which it follows that,
\[
  \begin{split}
  \exp(h_n\gamma_{\tht_0,\eta_0} - 2\ep)
    &\int_{D(\rho_n)}\prod_{i=1}^n\frac{p_{\theta_0,\eta}}{p_0}(X_i)
       \1_{G_n(\eta,\ep)\cap F_n(\eta,\ep)}(\sample_n)\,d\Pi_H(\eta)\\
    &\leq\int_{D(\rho_n)}\prod_{i=1}^n\frac{p_{\theta_n(h_n),\eta}}{p_0}(X_i)
       \1_{G_n(\eta,\ep)\cap F_n(\eta,\ep)}(\sample_n)\,d\Pi_H(\eta)\\
    &\leq\exp(h_n\gamma_{\tht_0,\eta_0} + 2\ep)
     \int_{D(\rho_n)}\prod_{i=1}^n\frac{p_{\theta_0,\eta}}{p_0}(X_i)
       \1_{G_n(\eta,\ep)\cap F_n(\eta,\ep)}(\sample_n)\,d\Pi_H(\eta).
  \end{split}
\]
The integrals can be relieved of indicators for $G_n\cap F_n$ by
reversing preceding arguments (with $\tht_0$ replacing $\tht_n$),
at the expense of an $\exp(o_{P_0}(1))$-factor, leading to,
\[
  \begin{split}
    \exp(h_n\gamma_{\tht_0,\eta_0} - 3\ep+o_{P_0}(1))
    &\int_{H}\prod_{i=1}^n\frac{p_{\theta_0,\eta}}{p_0}(X_i)\,d\Pi_H(\eta)\\
    &\leq \int_{H}\prod_{i=1}^n\frac{p_{\theta_n(h_n),\eta}}{p_0}(X_i)
       \,d\Pi_H(\eta)\\
    &\qquad\leq\exp(h_n\gamma_{\tht_0,\eta_0} + 3\ep+o_{P_0}(1))
       \int_{H}\prod_{i=1}^n\frac{p_{\theta_0,\eta}}{p_0}(X_i)\,d\Pi_H(\eta).
  \end{split}
\]
for all $h_n \leq \Delta_n$. Since this holds for arbitrarily small $\ep>0$,
it proves desired result.
\end{proof}

\begin{proof}(of Theorem~\ref{thm:PostLAE})\\
Let $C$ be an arbitrary compact subset of $\RR$ containing an open
neighbourhood of the origin. Denote the (randomly located) distribution
$\NExp_{\Delta_n,\gamma_{\tht_0,\eta_0}}$ by $\Xi_n$. The prior and marginal
posterior for the local parameter $h$ are denoted $\Pi_n$ and
$\Pi_n(\,\cdot\,|\sample_n)$. Conditioned on $C\subset\RR$, these
measures are denoted $\Xi_n^C, \Pi_n^C$ and $\Pi^C_n(\,\cdot\,|\sample_n)$
respectively. Define the functions $\xi^*_n,\xi_n:\RR\to\RR$ as,
\[
  \xi^*_n(x) = \gamma_{\tht_0,\eta_0} e^{\gamma_{\tht_0,\eta_0}(x-\Delta_n)},\quad
  \xi_n(x) = \xi^*_n(x)\,\1_{\{x\leq \Delta_n\}}.
\]
noting that $\xi_n$ is the Lebesgue density for $\Xi_n$. Also
define $s^*_n(h)=s_n(h)$ on $(-\infty,\Delta_n]$ and
$s^*_n(h)=s_n(0)\exp(h\gamma_{\tht_0,\eta_0}+d_n)$ elsewhere. Finally, define,
for every $g,h\in C$ and large enough $n$,
\[
  f_n(g,h)=\left(1-\frac{\xi_n(h)}{\xi_n(g)}
    \frac{s_n(g)}{s_n(h)} \frac{\pi_n(g)}{\pi_n(h)}\right)_+
    \1_{\{g\leq \Delta_n\}}
    \1_{\{h\leq \Delta_n\}},
\]
and
\[
  f^*_n(g,h)=\left(1-\frac{\xi^*_n(h)}{\xi^*_n(g)}
    \frac{s^*_n(g)}{s^*_n(h)} \frac{\pi_n(g)}{\pi_n(h)}\right)_+,
\]
By (\ref{eq:ilae}) we know that $d_n=\log s_n(\Delta_n)-\log s_n(0)
-\Delta_n\gamma_{\tht_0,\eta_0} = o_{P_0}(1)$. Furthermore, for every stochastic
sequence $(h_n)$ in $C$,
\[
  \log s^*_n(h_n) = \log s^*_n(0) + h_n\gamma_{\tht_0,\eta_0}+o_{P_0}(1),\quad
  \log \xi^*_n(h_n)  = (h_n-\Delta_n)\gamma_{\tht_0,\eta_0}
    + \log \gamma_{\tht_0,\eta_0}.
\]
Since $\xi_n^*(h)$ and $\xi_n(h)$ ($s^*_n(h)$ and $s_n(h)$, respectively)
coincide on $\{h \leq \Delta_n\}$, $f_n(g,h) \leq f^*_n(g,h)$. For any
two stochastic sequences $(h_n),(g_n)$ in $C$, $\pi_n(g_n)/\pi_n(h_n)\to1$
as $n\to\infty$ since $\pi$ is continuous and non-zero at $\theta_0$.
Combination with the above display leads to,
\[
  \log \frac{\xi^*_n(h)}{\xi^*_n(g)}
    \frac{s^*_n(g)}{s^*_n(h)}\frac{\pi_n(g)}{\pi_n(h)}
  = (h_n-\Delta_n)\gamma_{\tht_0,\eta_0} - (g_n-\Delta_n)\gamma_{\tht_0,\eta_0}
    +g_n\gamma_{\tht_0,\eta_0} - h_n\gamma_{\tht_0,\eta_0} + o_{P_0}(1)
  =  o_{P_0}(1).
\]
Since $x \mapsto (1-e^x)_+$ is continuous on $(-\infty, \infty)$, we
conclude that for any stochastic sequence $(g_n,h_n)$ in $C\times C$,
$f^*_n(g_n,h_n) \convprob{P_0} 0$. To render this limit uniform over
$C\times C$, continuity is enough: $(g,h)\mapsto\pi_n(g)/\pi_n(h)$ is
continuous since the prior is thick. Note that $\xi^*_n(h)/s^*_n(h)$ is
of the form $\gamma_{\tht_0,\eta_0}\exp(\gamma_{\tht_0,\eta_0}
(\Delta_n+R_n(h)))$ for all $h$,
$n\geq1$, and $R_n(h_n)=o_{P_0}(1)$. Tightness of $\Delta_n$ and $R_n$
implies that $\xi^*_n(h)/s^*_n(h) \in (0,\infty)$, $(P^n_0-a.s.)$.
Continuity of $h\mapsto s_n(h)$ and $h\mapsto \xi^*_n(h)$ then implies
continuity of $(g,h) \mapsto (\xi^*_n(h)s_n^*(g))/(\xi^*_n(g)s_n^*(h))$,
$(P^n_0-a.s.)$. Hence we conclude that,
\begin{equation}
\label{eq:uni}
  \sup_{(g,h)\in C\times C} f_n(g,h)\leq\sup_{(g,h)\in C\times C}f^*_n(g,h)
  \convprob{P_0} 0.
\end{equation}
Since $s_n(h)$ is supported on $(-\infty,\Delta_n]$, since $C$ contains
a neighbourhood of the origin and since $\Delta_n$ is tight and positive,
$\Xi_n(C)>0$ and $\Pi_n(C|\sample_n)>0$, $(P^n_0-a.s.)$.
So conditioning on $C$ is well-defined (for the relevant cases where
$h \leq \Delta_n$). Let $\delta >0$ be given and define events,
\[
  \Omega_n=\Bigl\{\sample_n:\sup_{(g,h)\in C\times C}f_n(g,h)\leq\delta\Bigr\}.
\]
Based on $\Omega_n$ and (\ref{eq:uni}), write,
\[
  P^n_0\sup_A\Bigl|\,\Pi^C_n(h\in A|\underline{X}_n)-\Xi_n^C(A)\,\Bigr|
  \leq P^n_0\sup_A\Bigl|\,\Pi^C_n(h\in A|\sample_n)-\Xi_n^C(A)\,\Bigr|\,
    \1_{\Omega_n} + o(1).
\]
Note that both $\Xi_n^C$ and $\Pi_n^C(\cdot|\sample_n)$ have strictly
positive densities on $C$. Therefore, $\Xi_n^C$ is dominated by
$\Pi_n^C(\cdot|\sample_n)$ for all $n$ large enough. With that observation,
the first term on the right-hand side of the above display is calculated
to be,
\[
  \begin{split}
  \frac{1}{2}&P^n_0\sup_A\Bigl|\,\Pi^C_n(h\in A|\sample_n)-\Xi_n^C(A)\,\Bigr|
    \1_{\Omega_n}(\sample_n)\\
  &= P^n_0\int_C\Bigl(1-\frac{d\Xi^C_n}{d\Pi^C_n(\cdot|\sample_n)}\Bigr)_+
    \1_{\{h\leq\Delta_n\}}\,d\Pi^C_n(h|\sample_n)\1_{\Omega_n}(\sample_n)\\
  &= P^n_0\int_C\Bigl(1-\xi_n^C(h)\frac{\int_Cs_n(g)\pi_n(g)
    \1_{\{g\leq\Delta_n\}}dg}{s_n(h)\pi_n(h)}\Bigr)_+
    \1_{\{h\leq\Delta_n\}}d\Pi^C_n(h|\sample_n)\1_{\Omega_n}(\sample_n)\\
  &= P^n_0\int_C\Bigl(1-\int_C\frac{s_n(g)\pi_n(g)\xi_n(h)}
    {s_n(h)\pi_n(h)\xi_n(g)} \1_{\{g\leq\Delta_n\}}d\Xi_n^C(g)\Bigr)_+
    \!\1_{\{h\leq\Delta_n\}}d\Pi^C_n(h|\sample_n)\1_{\Omega_n}(\sample_n),\\
  \end{split}
\]
for large enough $n$. Jensen's inequality leads to
\[
  \begin{split}
  \frac{1}{2}&P^n_0\sup_A\Bigl|\,\Pi^C_n(h\in A|\sample_n)-\Xi_n^C(A)\,\Bigr|
    \1_{\Omega_n}(\sample_n)\\
  &\leq P^n_0\int\Bigl(1-\frac{s_n(g)\pi_n(g)\xi_n(h)}
    {s_n(h)\pi_n(h)\xi_n(g)}\Bigr)_+ \1_{\{h\leq\Delta_n\}}
     \,\1_{\{g\leq\Delta_n\}}
    \,d\Xi_n^C(g)\,d\Pi^C_n(h|\sample_n)\1_{\Omega_n}(\sample_n)\\
  &\leq P^n_0 \int\sup_{(g,h)\in C\times C}f_n(g,h)
    \,d\Xi_n^C(g)\,d\Pi^C_n(h|\sample_n)\1_{\Omega_n}(\sample_n)\leq\delta.
  \end{split}
\]
We conclude that for all compact $C\subset\RR$ containing a neighbourhood
of the origin, $P^n_0\|\Pi_n^C-\Xi_n^C\|\to 0$. To finish the argument, let
$(C_m)$ be a sequence of closed balls centred at the origin with radii
$M_m\to\infty$. For each fixed $m\geq1$ the above display holds with
$C = C_m$, so if we traverses the sequence $(C_m)$ slowly enough,
convergence to zero can still be guaranteed, \ie\ there exist $(M_n)$,
$M_n\to\infty$ such that, $P^n_0\|\Pi_n^{B_n}-\Xi_n^{B_n}\|\to 0$.
Using Lemmas~2.11 and~2.12 in \cite{Kleijn03} we conclude that
(\ref{eq:iConv}) holds.
\end{proof}

\begin{proof}(of Lemma~\ref{lem:DomLemma})\\
Assume first that the ``$q$-domination'' condition is satisfied. Assertion
\emph{(i)} follows from Jensen's inequality. For the second assertion,
fix $\eta \in D(\rho)$ and take a sequence of events $(F_n)$ such that
$P^n_{\theta_0,\eta}(F_n) \to 0$. Contiguity now follows from H\"older's
inequality (with $1/p+1/q=1$),
\[
    P^n_{\theta_n(h_n),\eta}(F_n)
    \leq \Bigl(\int\Bigl(
      \prod_{i=1}^n\frac{p_{\theta_n(h_n),\eta}}{p_{\theta_0,\eta}}(X_i)
      \Bigr)^q\,dP^n_{\theta_0,\eta}\Bigr)^{1/q}
      \Bigl(\int \1_{F_n}^p\,dP^n_{\theta_0,\eta}\Bigr)^{1/p}
    \lesssim P^n_{\theta_0,\eta}(F_n)^{1/p}\to 0.
\]
Next, assume that the log-Lipschitz condition is satisfied. Let $(h_n)$ be a
stochastic sequence bounded by $M>0$. By (\ref{eq:LogLipschitz}),
\[
  \prod_{i=1}^n \frac{p_{\theta_n(h_n), \eta}}{p_{\theta_0,\eta}}(X_i)
  \leq \exp\Bigl(\sum_{i=1}^n m_{\theta_0,\eta}(X_i) \frac{|h_n|}{n}\Bigr)
  \leq \exp\Bigl(\frac{M}{n}\sum_{i=1}^n m_{\theta_0,\eta}(X_i) \Bigr),
\]
for $X_i$ in $A_{\theta_0,\eta}$, which holds with $P_{\theta_0,\eta}$-probability
one. Therefore,
\[
  P^n_{\theta_0,\eta}\bigg(
    \prod_{i=1}^n\frac{p_{\theta_n(h_n),\eta}}{p_{\theta_0,\eta}}(X_i)
    \bigg)
  \leq P^n_{\theta_0,\eta}\bigg(
    \exp\Bigl(\frac{M}{n}\sum_{i=1}^nm_{\theta_0,\eta}(X_i)\Bigr)
    \bigg)
  \leq P_{\theta_0,\eta} \exp(M m_{\theta_0,\eta}).
\]
Due to the uniformity of the assumed bound on
$P_{\theta_0,\eta}\exp(Km_{\theta_0,\eta})$, this proves $(i)$. For the
second assertion, fix $\eta\in D(\rho)$ for some $\rho>0$ small enough,
and take a sequence of events $F_n$ such that $P^n_{\theta_0,\eta}(F_n)\to0$.\
Then,
\[
  \begin{split}
  P^n_{\theta_n(h_n),\eta}(F_n)
    & \leq \int \exp\Bigl(\frac{M}{n}\sum_{i=1}^n m_{\theta_0,\eta}(X_i) \Bigr)
      \1_{F_n}(\sample_n) \, dP^n_{\theta_0,\eta}\\
    &\leq \Bigl(\int \exp\Bigl(\frac{qM}{n}\sum_{i=1}^n m_{\theta_0,\eta}(X_i)
      \Bigr) \, dP^n_{\theta_0,\eta} \Bigr)^{1/q}
      \Bigl(\int\1_{F_n}^p\,dP^n_{\theta_0,\eta} \Bigr)^{1/p}\\
    &\leq\bigl(P_{\theta_0,\eta}\exp(qMm_{\theta_0,\eta})\bigr)^{1/q}
      P^n_{\theta_0,\eta}(F_n)^{1/p}\rightarrow 0,
  \end{split}
\]
where we have used H\"older's inequality (with $1/p +1/q = 1$) and Jensen's
inequality. The uniform bound on $P_{\theta_0,\eta}\exp(K m_{\theta_0,\eta})$
implies that $\bigl(P_{\theta_0,\eta} \exp(qMm_{\theta_0,\eta})\bigr)^{1/q}$ is
finite for any $\eta\in D(\rho)$ and $q>1$.  
\end{proof}

\begin{proof}(of Lemma~\ref{lem:MarginalDenominator})\\
Let $M > 0$ be given and define the set $C=\{h:-M\leq h\leq0\}$.
Denote the $o_{P_0}(1)$ rest-term in the integral LAE expansion
(\ref{eq:ilae}) by $h\mapsto R_n(h)$. By continuity of $\theta\mapsto
S_n(\theta)$, the expansion holds uniformly over compacta for large
enough $n$ and in particular, $\sup_{h \in C}|R_n(h)|$ converges to zero
in $P_0$-probability. Let $(K_n)$, $K_n\to\infty$ be given. The events
$B_n = \bigl\{\sup_C |R_n(h)|\leq K_n/2\bigr\}$ satisfy $P^n_0(B_n)\to1$.
Since $\Pi_\Theta$ is thick at $\theta_0$, there exists
a $\pi > 0$ such that $\inf_{h\in C} d\Pi_n/dh \geq \pi$, for large
enough $n$. Therefore,
\[
  P^n_0\biggl(\int \frac{s_n(h)}{s_n(0)}\, d\Pi_n(h) \leq e^{-K_n}\biggr)
    \leq P^n_0\biggl(\biggl\{\int_C \frac{s_n(h)}{s_n(0)}\, dh
    \leq \pi^{-1}e^{-K_n}\biggl\}\cap B_n \biggr) +o(1).
\]
On $B_n$, the integral LAE expansion is lower bounded so that, for
large enough $n$,
\[
  P^n_0\biggl(\biggl\{\int_C \frac{s_n(h)}{s_n(0)}\, d\Pi_n(h)
    \leq \pi^{-1}e^{-K_n}\biggl\}\cap B_n \biggr)
    \leq P^n_0\biggl(\int_C e^{h\gamma_{\tht_0,\eta_0}}\, dh
    \leq \pi^{-1}e^{-\frac{K_n}{2}}\biggr).
\]
Since $\int_C e^{h\gamma_{\tht_0,\eta_0}}\,dh \geq
M\,e^{-M\gamma_{\tht_0,\eta_0}}$ and $K_n\to\infty$,
$e^{-\frac{K_n}{2}}\leq\pi\,M\,e^{-M\gamma_{\tht_0,\eta_0}}$ for
large enough $n$. Combination
of the above with $K_n = -\log a_n$ proves the desired result.
\end{proof}

\subsection{Proofs of Subsection~\ref{sub:semishift}}
\label{sub:proofssemishift}

We first present properties of the map defining the nuisance space.
\begin{lemma}
\label{lem:Esscher}
Let $\alpha>S$ be fixed. Define $H$ as the image of $\scrL$ under
the map that takes $\score\in\scrL$ into densities
$\eta_\score$ defined by (\ref{eq:Esscher})  
for $x\geq0$. This map is uniform-to-Hellinger continuous and
the space $H$ is a collection of probability densities that
are {\it (i)} monotone decreasing with sub-exponential tails, {\it (ii)}
continuously differentiable on $[0,\infty)$ and {\it (iii)}
log-Lipschitz with constant $\alpha+S$.
\end{lemma}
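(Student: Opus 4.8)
The strategy is to work directly from the explicit formula~(\ref{eq:Esscher}). Write $Z(\score) = \int_0^\infty e^{-\alpha y + \int_0^y \score(t)\,dt}\,dy$ for the normalising constant, so that $\eta_\score(x) = Z(\score)^{-1}\exp(-\alpha x + \int_0^x\score(t)\,dt)$. First I would check that $Z(\score)$ is finite and bounded away from $0$ and $\infty$, uniformly over $\score \in \scrL$: since $\|\score\|_\infty \le S < \alpha$, the exponent is bounded above by $-(\alpha-S)y$ and below by $-(\alpha+S)y$, giving $1/(\alpha+S) \le Z(\score) \le 1/(\alpha-S)$. This immediately yields the sub-exponential tail bound in~{\it (i)} (each $\eta_\score$ is dominated by a multiple of $e^{-(\alpha-S)x}$) and that each $\eta_\score$ is a bona fide probability density. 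Monotone decrease in~{\it (i)} follows because $\frac{d}{dx}\log\eta_\score(x) = \score(x) - \alpha \le S - \alpha < 0$. Continuous differentiability on $[0,\infty)$ in~{\it (ii)} is immediate: $x\mapsto \int_0^x\score(t)\,dt$ is $C^1$ (indeed $C^1$ with derivative the continuous function $\score$), hence so is the exponential, and dividing by the constant $Z(\score)$ preserves this.

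For the log-Lipschitz property~{\it (iii)} I would estimate, for $x < x'$,
\[
  \Bigl|\log\eta_\score(x') - \log\eta_\score(x)\Bigr|
  = \Bigl| -\alpha(x'-x) + \int_x^{x'}\score(t)\,dt \Bigr|
  \le \alpha(x'-x) + S(x'-x) = (\alpha+S)|x'-x|,
\]
so $\eta_\score$ is log-Lipschitz with the stated constant $\alpha+S$, uniformly in $\score$. (Equivalently, $|(\log\eta_\score)'(x)| = |\score(x)-\alpha| \le \alpha+S$.) This also supplies the log-Lipschitz bound on likelihood ratios that feeds Lemma~\ref{lem:DomLemma} in the examples, with $m_{\tht_0,\eta}$ a constant multiple of $\eta_\score$ evaluated suitably; the relevant exponential-moment finiteness is then trivial since the Lipschitz constant is deterministic.

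The one genuinely non-routine step is the uniform-to-Hellinger continuity of $\score \mapsto \eta_\score$. The plan is: given $\score_k \to \score$ in $\|\cdot\|_\infty$ on $[0,\infty]$ (the extended half-line, so uniform convergence includes the limit at infinity), show $\int_0^x \score_k(t)\,dt \to \int_0^x\score(t)\,dt$ uniformly on compacta and, because of the uniform exponential domination from Step~1, in fact uniformly on $[0,\infty)$ after the $e^{-(\alpha-S)x}$ weighting; then $Z(\score_k) \to Z(\score)$ by dominated convergence, and $\eta_{\score_k} \to \eta_\score$ pointwise. To upgrade pointwise convergence of densities to Hellinger convergence I would invoke Scheffé's lemma (pointwise convergence of probability densities implies $L^1$, hence Hellinger, convergence) together with the uniform integrable envelope $C e^{-(\alpha-S)x}$ to justify the limit. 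The main obstacle is bookkeeping the uniformity near infinity — ensuring $\int_0^x\score_k \to \int_0^x\score$ does not degrade as $x\to\infty$ — which is exactly why the hypothesis places $\score$ in $C[0,\infty]$ with a limit at infinity rather than merely in $C[0,\infty)$; controlling the tail of $\int_0^x(\score_k-\score)$ by $\|\score_k-\score\|_\infty \cdot x$ against the decaying exponential $e^{-(\alpha-S)x}$ closes this gap.
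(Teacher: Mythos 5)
Your proof is correct and takes essentially the same route as the paper: uniform bounds on the normalizing constant from $\|\score\|_\infty\le S<\alpha$, the identity $(\log\eta_\score)'=\score-\alpha$ yielding monotone decrease, continuous differentiability, and the log-Lipschitz constant $\alpha+S$, plus a dominated-convergence argument for uniform-to-Hellinger continuity (the paper phrases this as uniform-to-uniform continuity of the unnormalized expression together with positivity, while you use pointwise convergence with the exponential envelope and Scheff\'e's lemma; these are routine variants of the same argument). One small inaccuracy in your forward-looking aside: in the application of Lemma~\ref{lem:DomLemma} the Lipschitz function $m_{\tht_0,\eta}$ is the deterministic constant $\alpha+S$, not ``a constant multiple of $\eta_\score$ evaluated suitably.''
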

\begin{proof}
One easily shows that $\score\mapsto\exp(-\alpha\,x+\int_0^x\score)$ is
uniform-to-uniform continuous and that $\exp(-\alpha\,x+\int_0^x\score)>0$,
which implies uniform-to-Hellinger continuity of the Esscher transform.
For the properties of $\eta_\score$, note that
$\int_0^x\score(y)\,dy\leq S\,x<\alpha\,x$,
so that $x\mapsto\exp(-\alpha\,x+\int_0^x\score(t)\,dt)$ is
sub-exponential, which implies that $\score\mapsto\eta_\score$ gives rise
to a probability density. The density $\eta$ is differentiable and
monotone decreasing. Furthermore, for all $\tht,\tht_0\in\Tht$
and all $x\geq\tht_0$,
\[
 \frac{\eta_\score(x-\tht)}{\eta_\score(x-\tht_0)}
 \leq \exp\Bigl( \alpha(\tht-\tht_0) 
   + \int_{x-\tht_0}^{x-\tht}\score(t)\,dt \Bigr)
 \leq e^{(\alpha+S)|\tht-\tht_0|},
\]
proving the log-Lipschitz property.
\end{proof}

The proof of Theorem~\ref{thm:SBvM-Example} consists of a verification
of the conditions of Corollary~\ref{cor:RateFree}. The following
lemmas make the most elaborate steps explicit. 
\begin{lemma}
\label{lem:Entropy}
Hellinger covering numbers for $H$ are finite, \ie\ for all $\rho>0$, 
$N(\rho,H,d_H)<\infty$.
\end{lemma}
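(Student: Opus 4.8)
The plan is to exploit the compactness of the parameter set $\scrL$ (a closed ball in $C[0,\infty]$ is \emph{not} compact in the sup-norm, so I will instead use total boundedness of $\scrL$ in a weaker sense, or more directly work with the image) together with the uniform-to-Hellinger continuity of the Esscher transform established in Lemma~\ref{lem:Esscher}. The key structural observation is that $d_H(\eta_{\score},\eta_{\score'}) = H(P_{\tht_0,\eta_{\score}},P_{\tht_0,\eta_{\score'}})$, and that this Hellinger distance is controlled by the sup-distance $\|\score-\score'\|_\infty$ through an explicit modulus. So a sup-norm covering of $\scrL$ pushes forward to a Hellinger covering of $H$, and it suffices to show $\scrL$ admits, for each $\rho>0$, a finite set of functions whose Esscher transforms are $\rho$-dense in $(H,d_H)$.

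First I would make the continuity modulus quantitative: from the formula (\ref{eq:Esscher}), writing $g_{\score}(x)=\exp(-\al x+\int_0^x\score)$ and $Z_{\score}=\int_0^\infty g_{\score}$, one checks that on $[0,T]$ one has $|g_{\score}(x)-g_{\score'}(x)|\le g_{\score}(x)\bigl(e^{T\|\score-\score'\|_\infty}-1\bigr)$, and that the normalizing constants $Z_{\score}$ are bounded above and below uniformly over $\scrL$ (using $\int_0^x\score\le Sx<\al x$, so $g_{\score}(x)\le e^{-(\al-S)x}$, giving a uniform sub-exponential envelope). Splitting $\int_0^\infty$ at a truncation level $T=T(\rho)$ chosen so the tail contributes less than, say, $\rho^2/4$ to the squared Hellinger distance (possible by the uniform envelope), one obtains a bound of the form $d_H(\eta_{\score},\eta_{\score'})^2 \le C\,\|\score-\score'\|_{\infty,[0,T]} + \rho^2/4$ for $\|\score-\score'\|_\infty$ small, with $C$ depending only on $S,\al,T$. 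Second, $\scrL$ restricted to $[0,T]$ is a bounded set in $C[0,T]$; by the Arzelà--Ascoli theorem it need not be compact without equicontinuity, so here I would instead invoke that we only need a \emph{finite} $\rho$-net in the pseudometric $d_H$, and cover $\scrL$ by finitely many $L^\infty[0,T]$-balls of radius $\delta(\rho)$ — but since $\scrL$ is infinite-dimensional this requires total boundedness in $L^\infty[0,T]$, which again fails for a sup-norm ball. The clean fix, and the route I expect the authors take, is to observe that the relevant metric on $H$ is Hellinger, and that $H$ consists of uniformly sub-exponential, uniformly log-Lipschitz (constant $\al+S$, by Lemma~\ref{lem:Esscher}) densities on $[0,\infty)$; such a class is totally bounded in Hellinger distance directly, e.g.\ by covering the finite-dimensional ``shape on $[0,T]$'' by piecewise-linear approximants with controlled slope and bounding the tail uniformly.

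Concretely: fix $\rho>0$, choose $T$ so the uniform tail bound $\int_T^\infty e^{-(\al-S)x}\,dx$ forces $H\bigl(P_{\tht_0,\eta},P_{\tht_0,\eta'}\bigr)^2$ restricted to $(T,\infty)$ below $\rho^2/4$ for all $\eta,\eta'\in H$. On $[0,T]$ each $\eta\in H$ is positive, bounded above and below, and Lipschitz with a uniform constant (from the log-Lipschitz bound and the uniform sup bound), hence $\{\eta|_{[0,T]}:\eta\in H\}$ is an equi-Lipschitz, uniformly bounded family on a compact interval, therefore totally bounded in $\|\cdot\|_\infty$ by Arzelà--Ascoli. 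A finite $\delta$-net there, with $\delta$ small enough that $\|\eta-\eta'\|_{\infty,[0,T]}<\delta$ implies the $[0,T]$-contribution to squared Hellinger distance is below $\rho^2/4$ (using the uniform lower bound on the densities to pass from $|\eta-\eta'|$ to $|\sqrt\eta-\sqrt{\eta'}|$), yields a finite subset of $H$ that is $\rho$-dense in $d_H$. That gives $N(\rho,H,d_H)<\infty$.

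\textbf{Main obstacle.} The delicate point is that $\scrL$ itself is \emph{not} compact (a ball in an infinite-dimensional Banach space), so one cannot simply push a covering of $\scrL$ forward; the argument must exploit that the \emph{image} $H$ inherits strong regularity — uniform positivity, boundedness, and a uniform Lipschitz/log-Lipschitz constant on compacta — which makes $H$ totally bounded in Hellinger distance even though the domain is not. Getting the uniform two-sided bounds on the densities and the tail control, and then quantitatively converting a sup-norm net on $[0,T]$ into a Hellinger net on all of $[0,\infty)$, is the real content; the rest is bookkeeping.
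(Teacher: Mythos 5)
Your argument is correct but follows a genuinely different route from the paper's. The paper normalizes by the reference exponential density $q(x)=\rho_0^2e^{-\rho_0^2x}$ with $\rho_0^2=\alpha-S$ and observes that the ratio $\eta_\score/q$ is proportional to $e^{-Sx+\int_0^x\score}$, which is \emph{monotone decreasing} and uniformly bounded over $\scrL$; the family $\scrF=\{\sqrt{\eta_\score/q}\}$ therefore sits inside the class of monotone functions $\RR\to[0,C]$, for which Theorem~2.7.5 in van~der~Vaart and Wellner gives finite $L_2(Q)$-bracketing entropy for every $\ep>0$, and the identity $d_H(\eta,\eta')^2=\int(\sqrt{\eta/q}-\sqrt{\eta'/q})^2\,dQ$ transfers this directly to finite Hellinger covering numbers for $H$. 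Your approach instead exploits that all $\eta\in H$ satisfy $|\eta'/\eta|\leq\alpha+S$ and $\alpha-S\leq\eta(0)\leq\alpha+S$, so that $H$ restricted to a compact $[0,T]$ is a uniformly bounded, equi-Lipschitz family — totally bounded in sup-norm by Arzel\`a--Ascoli — and couples this with a uniform sub-exponential tail bound to control the contribution beyond $T$. Both proofs are sound; yours is more elementary and self-contained (it avoids the bracketing machinery), while the paper's is shorter because the monotonicity observation lets it cite a stock entropy result and the exponential weight absorbs the tail implicitly. You correctly identify the obstruction to the naive approach (the sup-norm ball $\scrL$ is not totally bounded in $C[0,\infty]$), and the pivot to regularity of the image $H$ is exactly the right fix; one minor stylistic point is that the log-Lipschitz clause of Lemma~\ref{lem:Esscher} is stated in terms of the shift $\theta$, but as you implicitly use, the proof there shows precisely $|\eta'/\eta|\leq\alpha+S$, so the citation is substantively correct.
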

\begin{proof}
Given $0 < S < \alpha$, we define $\rho_0^2 = \alpha-S>0$.
Consider the distribution $Q$ with Lebesgue density $q>0$ given by
$q(x) = \rho_0^2e^{-\rho_0^2x}$ for $x \geq 0$. Then the family $\scrF=
\{x\mapsto\sqrt{{\eta_\score}/{q}(x)}:\score\in\scrL\}$
forms a subset of the collection of all monotone functions
$\RR\mapsto[0,C]$, where $C$ is fixed and depends on $\alpha$, and $S$.
Referring to Theorem~2.7.5 in van~der~Vaart and Wellner (1996)
\cite{vdVaart96}, we conclude that the $L_2(Q)$-bracketing entropy
$N_{[\, ]}(\ep,\scrF,L_2(Q))$ of $\scrF$ is finite for all $\ep>0$.
Noting that,
\[
 d_H(\eta,\eta_0)^2 = d_H\bigl(\eta_\score,\eta_{\score_0}\bigr)^2
   = \int_\RR \Bigl( \sqrt\frac{\eta_\score}{q}(x)
     - \sqrt\frac{\eta_{\score_0}}{q}(x) \Bigr)^2\,dQ(x),
\]
it follows that $N(\rho,H,d_H)=N(\rho,\scrF,L_2(Q))\leq
N_{[\, ]}(2\rho,\scrF,L_2(Q))<\infty$.
\end{proof}
The following lemma establishes that condition \emph{(ii)} of 
Corollary~\ref{cor:RateFree} is satisfied. Moreover, assuming that the nuisance 
prior is such that $\scrL\subset\supp(\Pi_\scrL)$, this lemma establishes that 
$\Pi_H(K(\rho)) > 0$. This, together with the assertion of the previous lemma, 
verifies condition \emph{(i)} of Corollary~\ref{cor:RateFree}.
\begin{lemma}
\label{lem:SupInKL}
For every $M > 0$ there exist constants $L_1, L_2>0$ such that for small 
enough $\rho>0$,
$\{ \eta_\score \in H :\|\score-\score_0\|_\infty\leq\rho^2 \} \subset
K(L_1\rho)\subset K_n(L_2\rho,M)$.
\end{lemma}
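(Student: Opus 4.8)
The plan is to establish the two inclusions separately: the first, $\{\eta_\score\in H:\|\score-\score_0\|_\infty\le\rho^2\}\subset K(L_1\rho)$, is $n$-free, while the second, $K(L_1\rho)\subset K_n(L_2\rho,M)$, holds for all large enough $n$ — which is exactly the form required by Corollary~\ref{cor:RateFree}.

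For the first inclusion I would work directly in the Esscher parametrisation. Writing $Z(\score)=\int_0^\infty e^{-\alpha y+\int_0^y\score(t)\,dt}\,dy$ one has, for $x\ge\theta_0$,
\[
  \log\frac{p_{\theta_0,\eta_\score}}{p_0}(x)
    =\int_0^{x-\theta_0}\bigl(\score(t)-\score_0(t)\bigr)\,dt-\log\frac{Z(\score)}{Z(\score_0)}.
\]
Under $P_0$ the variable $U=X-\theta_0$ has density $\eta_0$ on $[0,\infty)$, and $\|\score-\score_0\|_\infty\le\rho^2$ yields $\bigl|\int_0^U(\score-\score_0)\bigr|\le U\rho^2$. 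For the normalising factor, note that $Z(\score)/Z(\score_0)=P_0\exp\bigl(\int_0^U(\score-\score_0)(t)\,dt\bigr)$ lies between $P_0e^{-U\rho^2}$ and $P_0e^{U\rho^2}$; using the uniform sub-exponential bound $\eta_0(u)\le(\alpha+S)e^{-(\alpha-S)u}$ from (the proof of) Lemma~\ref{lem:Esscher}, these exponential moments are finite for $\rho^2<\alpha-S$ and differ from $1$ by $O(\rho^2)$, so $|\log(Z(\score)/Z(\score_0))|\le C\rho^2$ with $C$ depending only on $\alpha,S$. Hence $|\log(\eta_\score/\eta_0)(u)|\le(u+C)\rho^2$, and since $\eta_0$ has finite moments of every order,
\[
  -P_0\log\frac{p_{\theta_0,\eta_\score}}{p_0}\le\rho^2\,P_0(U+C),\qquad
  P_0\Bigl(\log\frac{p_{\theta_0,\eta_\score}}{p_0}\Bigr)^2\le\rho^4\,P_0(U+C)^2 ;
\]
choosing $L_1^2$ to be the larger of the two (finite) coefficients and restricting to $\rho\le1$ gives $\eta_\score\in K(L_1\rho)$.

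For the second inclusion I would compare the $h$-perturbed log-ratio to its value at $h=0$. Fix $\eta\in K(L_1\rho)\subset H$, so $\eta$ is log-Lipschitz with constant $\alpha+S$ by Lemma~\ref{lem:Esscher}. For $P_0$-a.e.\ $x$, with $u=x-\theta_0>0$, one checks $\1_{A_{\theta_n(h),\eta}}(x)=\1_{\{u\ge h/n\}}$, and on that event $-\log(p_{\theta_n(h),\eta}/p_0)(x)=\log\eta_0(u)-\log\eta(u-h/n)$. Splitting this as $\bigl(\log\eta_0(u)-\log\eta(u)\bigr)+\bigl(\log\eta(u)-\log\eta(u-h/n)\bigr)$, bounding the second summand by $(\alpha+S)|h|/n\le(\alpha+S)M/n$ for $|h|\le M$, and using the choice $h=0$ inside the supremum for the matching lower estimate, one obtains
\[
  \Bigl|\sup_{|h|\le M}-\1_{A_{\theta_n(h),\eta}}\log\frac{p_{\theta_n(h),\eta}}{p_0}(x)\Bigr|
    \le\Bigl|\log\frac{\eta}{\eta_0}(u)\Bigr|+\frac{(\alpha+S)M}{n}.
\]
Integrating against $P_0$, invoking $-P_0\log(p_{\theta_0,\eta}/p_0)\le L_1^2\rho^2$ and $P_0(\log(p_{\theta_0,\eta}/p_0))^2\le L_1^2\rho^2$ together with $(a+b)^2\le2a^2+2b^2$, reproduces both defining inequalities of $K_n(\cdot,M)$ up to additive errors of order $M/n$ and $(M/n)^2$; for $n$ large (possibly depending on $\rho$) these are dominated by $\rho^2$, so $L_2^2=2L_1^2+1$ works.

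The moment estimates and the bookkeeping of the support sets $A_{\theta_n(h),\eta}$ are routine; the step needing genuine care is the control of $Z(\score)/Z(\score_0)$ in the first inclusion, because the nuisance densities live on a half-line and converting the uniform bound on $\score-\score_0$ into bounds on the Kullback--Leibler-type quantities relies essentially on the hypothesis $\alpha>S$, via the uniform exponential tail supplied by Lemma~\ref{lem:Esscher}. I would also state explicitly that ``large enough $n$'' in the second inclusion may depend on $\rho$, which is precisely how Corollary~\ref{cor:RateFree} uses the statement. Finally, since a ball $\{\|\score-\score_0\|_\infty\le(\rho/L_1)^2\}$ about $\score_0$ has positive $\Pi_\scrL$-mass whenever $\scrL\subset\supp(\Pi_\scrL)$, the first inclusion also yields $\Pi_H(K(\rho))>0$ for every $\rho>0$, as needed in verifying condition~(i) of Corollary~\ref{cor:RateFree}.
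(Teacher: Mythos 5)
The first inclusion is argued essentially as in the paper (Esscher parametrisation, normalisation constant $Z(\score)$, first and second moments of $\int_0^U(\score-\score_0)$), and is fine. The second inclusion, however, has a genuine gap.

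You control the quantity inside $K_n$ via the absolute-value bound
\[
\Bigl|\sup_{|h|\le M}-\1_{A_{\theta_n(h),\eta}}\log\frac{p_{\theta_n(h),\eta}}{p_0}(x)\Bigr|\le\Bigl|\log\tfrac{\eta}{\eta_0}(u)\Bigr|+\tfrac{(\alpha+S)M}{n},
\]
and then say that integrating and ``invoking $-P_0\log(p_{\theta_0,\eta}/p_0)\le L_1^2\rho^2$'' gives the first $K_n$ inequality. But integrating your bound produces $P_0\bigl|\log(\eta/\eta_0)\bigr|$, not $-P_0\log(p_{\theta_0,\eta}/p_0)$. The $K(L_1\rho)$ membership only controls the signed Kullback--Leibler quantity and the second moment; to get at $P_0|\log(\eta/\eta_0)|$ you must go through Cauchy--Schwarz, yielding $P_0|\log(\eta/\eta_0)|\le L_1\rho$. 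This is $O(\rho)$, not $O(\rho^2)$, so the resulting estimate $P_0(\sup\cdots)\lesssim L_1\rho+(\alpha+S)M/n$ cannot be made $\le L_2^2\rho^2$ with a $\rho$-free constant $L_2$, no matter how large $n$ is. In other words, taking absolute values discards the sign cancellation that makes the first-moment $K_n$ condition verifiable. The paper avoids this by keeping the one-sided bound
\[
\sup_{|h|\le M}-\1_{A_{\theta_n(h),\eta}}\log\frac{p_{\theta_n(h),\eta}}{p_0}\;\le\;-\log\frac{p_{\theta_0,\eta}}{p_0}+\frac{(\alpha+S)M}{n},
\]
whose $P_0$-mean is exactly the first $K(L_1\rho)$ quantity (up to $O(M/n)$); the first-moment inequality then follows directly with a genuine constant $L_2$. (Even the paper's argument is slightly sloppy at points where the indicator $\1_{A_{\theta_n(h),\eta}}$ can vanish, but that only introduces a small boundary correction supported on $\{n(X-\theta_0)<M\}$, which has $P_0$-measure $O(M/n)$; the absolute-value loss in your argument is a different, and more serious, obstruction.) To repair your proof you should bound the supremum itself rather than its absolute value, and handle the region where the indicator vanishes separately (it contributes at most a positive-part correction of order $o(\rho^2)$ for $n$ large). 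Your second-moment estimate via $(a+b)^2\le 2a^2+2b^2$ is fine as written.
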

\begin{proof}
Let $\rho$, $0<\rho<\rho_0$ and $\score\in\scrL$ such that
$\|\score-\score_0\|_\infty\leq\rho^2$ be given. 
Then,
\begin{equation}
 \label{eq:diffs}
 \Bigl| \,\,\log\frac{p_{\tht_0,\eta}}{p_{\tht_0,\eta_0}}(x) 
   -\int_0^{x-\tht_0} (\score-\score_0)(t)\,dt \,\, \Bigr|
   \leq \rho^2\,P_0(X-\tht_0) + O(\rho^4),
\end{equation}
for all $x\geq\tht_0$. Define, for all $\alpha>S$ and $\score\in\scrL$, 
the logarithm $z$ of the normalising factor in (\ref{eq:Esscher}). 
Then the relevant log-density-ratio can be written as,
\[
 \log\frac{p_{\tht_0,\eta}}{p_{\tht_0,\eta_0}}(x)
   = \int_0^{x-\tht_0}(\score-\score_0)(t)\,dt 
     - z(\alpha,\score) + z(\alpha,\score_0),
\]
where only the first term is $x$-dependent. 
Assume that $\score\in\scrL$ is such that $\|\score-\score_0\|_\infty<\rho^2$.
Then, $|\int_0^{y-\tht_0}(\score-\score_0)(t)\,dt|\leq \rho^2(y-\tht_0)$,
so that $z(\alpha-\rho^2,\score_0)\leq z(\alpha,\score)\leq
z(\alpha+\rho^2,\score_0)$. Noting that
$d^kz/d{\alpha}^k(\alpha,\score_0)=(-1)^k P_0(X-\tht_0)^k<\infty$
and using the first-order Taylor expansion of $z$ in $\alpha$, we find,
$z(\alpha\pm\rho^2,\score_0) = z(\alpha,\score_0)\mp
\rho^2\,P_0(X-\tht_0) + O(\rho^4)$, and (\ref{eq:diffs}) follows.
Next note that, for every
$k\geq1$,
\begin{equation}
 \label{eq:powers}
 \Biggl|\,\,P_0\Bigl(\int_0^{X-\tht_0}(\score-\score_0)(t)\,dt\Bigr)^k\,\,\Biggr|
   \leq \rho^{2k} \int_{\tht_0}^\infty\Bigl(\int_{0}^{x-\tht_0}dy\Bigr)^k\,dP_0
   = \rho^{2k}\,P_0(X-\tht_0)^k,
\end{equation}
Using (\ref{eq:diffs}) we bound the differences between KL
divergences and integrals of scores as follows:
\[
 \begin{split}
 \Biggl|\,\, \Bigl(\log\frac{p_{\tht_0,\eta}}{p_{\tht_0,\eta_0}}(x)\Bigr) 
   &- \Bigl(\int_0^{x-\tht_0}(\score-\score_0)(t)\,dt\Bigr) \,\,\Biggr|
   \leq
   \rho^2\bigl( P_0(X-\tht_0) + O(\rho^2) \bigr),\\
 \Biggl|\,\, \Bigl(\log\frac{p_{\tht_0,\eta}}{p_{\tht_0,\eta_0}}(x)\Bigr)^2 
   &- \Bigl(\int_0^{x-\tht_0}(\score-\score_0)(t)\,dt\Bigr)^2 \,\,\Biggr|
   \leq
     \rho^2\bigl( P_0(X-\tht_0) + O(\rho^2)  \bigr)\\
       &\times\Bigl|\,\, 2\int_0^{x-\tht_0}(\score-\score_0)(t)\,dt
       + \rho^2\bigl( P_0(X-\tht_0) + O(\rho^2)  \bigr) \,\Bigr|,
 \end{split}
\]
and, combining with the bounds (\ref{eq:powers}), we see that,
\[
 \begin{split}
 -P_0\log\frac{p_{\tht_0,\eta}}{p_{\tht_0,\eta_0}}
   &\leq 2\rho^2\bigl( P_0(X-\tht_0) + O(\rho^2) \bigr),\\
 P_0\Bigl(\log\frac{p_{\tht_0,\eta}}{p_{\tht_0,\eta_0}}\Bigr)^2 
   &\leq
     \rho^4\bigl( P_0(X-\tht_0)^2 + 3P_0(X-\tht_0) + O(\rho^2)\bigr),
 \end{split}
\]
which proves the first inclusion. Let $M > 0$. Note that $A_{\tht,\eta} = 
[\tht,\infty)$ for every $\eta$, and that 
\[
  \begin{split}
    \sup_{|h|\leq M} -\1_{A_{\theta_n(h),\eta}}&
      \log\frac{p_{\theta_n(h),\eta}}{p_{\theta_0,\eta_0}}
    = \sup_{|h|\leq M} \1_{A_{\theta_n(h),\eta}}
      \log\frac{p_{\theta_0,\eta_0}}{p_{\theta_n(h),\eta}}
    = \sup_{|h|\leq M} \1_{A_{\theta_n(h),\eta}}
      \log\frac{p_{\theta_0,\eta_0}}{p_{\theta_0,\eta}}
      \frac{p_{\theta_0,\eta}}{p_{\theta_n(h),\eta}}\\
   &= \sup_{|h|\leq M} \1_{A_{\theta_n(h),\eta}}
      \log\frac{p_{\theta_0,\eta}}{p_{\theta_n(h),\eta}}
      + \log \frac{p_{\theta_0,\eta_0}}{p_{\theta_0,\eta}}
    \leq \frac{(\alpha+S) M}{n}
      + \log \frac{p_{\theta_0,\eta_0}}{p_{\theta_0,\eta}},
  \end{split}
\]
so that,
\[
  \begin{split}
  P_0\Bigl(\sup_{|h|\leq M} &-\1_{A_{\theta_n(h),\eta}}
    \log \frac{p_{\theta_n(h),\eta}}{p_{\theta_0,\eta_0}}\Bigr)
  \leq -P_0 \log \frac{p_{\theta_0,\eta}}{p_{\theta_0,\eta_0}}
    + \frac{(\alpha+S) M}{n},\\
  P_0\Bigl(\sup_{|h|\leq M} &-\1_{A_{\theta_n(h),\eta}}
    \log\frac{p_{\theta_n(h),\eta}}{p_{\theta_0,\eta_0}}\Bigr)^2\\
  &\leq  P_0 \Bigl( \log \frac{p_{\theta_0,\eta}}{p_{\theta_0,\eta_0}} \Bigr)^2
    + \frac{2(\alpha+S)M}{n}\Bigl[P_0\Bigl(
      \log \frac{p_{\tht_0,\eta}}{p_{\tht_0,\eta_0}}\Bigr)^2\Bigr]^{1/2}
    +\frac{(\alpha+S)^2 M^2}{n^2},
  \end{split}
\]
implying the existence of a constant $L_2$.
\end{proof}
By Lemma~\ref{lem:Esscher}, the log-Lipschitz constant $m_{\tht_0,\eta}$ of
Lemma~\ref{lem:DomLemma} equals $\alpha+S$ for every $\eta \in H$, so
that the domination condition \emph{(iii)} and contiguity requirement
\emph{(iv)} of Corollary~\ref{cor:RateFree} are satisfied.
The following lemma shows that condition \emph{(v)} of
Corollary~\ref{cor:RateFree} is also satisfied.
\begin{lemma}
\label{lem:Hellinger}
For all bounded, stochastic sequences $(h_n)$, Hellinger distances
between $P_{\tht_n(h_n),\eta}$ and $P_{\tht_0,\eta}$ are of order $n^{1/2}$
uniformly in $\eta$, \ie\
$\sup_{\eta\in H} n^{1/2}\,H(P_{\theta_n(h_n),\eta},P_{\theta_0,\eta}) = O(1)$.
\end{lemma}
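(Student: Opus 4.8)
The plan is to evaluate the squared Hellinger distance directly, exploiting the simple shift structure of the model together with the log-Lipschitz estimate of Lemma~\ref{lem:Esscher}. Since $(h_n)$ is bounded, fix $M>0$ with $|h_n|\le M$ and set $t=n^{-1}h_n$, so $|t|\le M/n$; it then suffices to bound $n\,H^2\bigl(P_{\tht_0+t,\eta},P_{\tht_0,\eta}\bigr)$ uniformly over $\eta\in H$ and over $|t|\le M/n$. Substituting $y=x-\tht_0$ and using $p_{\tht_0+t,\eta}(x)=\eta(x-\tht_0-t)\1_{\{x\ge\tht_0+t\}}$, one obtains for $t\ge0$
\[
  H^2\bigl(P_{\tht_0+t,\eta},P_{\tht_0,\eta}\bigr)
  \;\lesssim\; \int_0^{t}\eta(y)\,dy
    \;+\; \int_{t}^\infty\bigl(\sqrt{\eta(y-t)}-\sqrt{\eta(y)}\bigr)^2\,dy,
\]
and an entirely analogous bound, with $\int_0^{|t|}\eta$ and an integral over $[0,\infty)$ of $\bigl(\sqrt{\eta(y+|t|)}-\sqrt{\eta(y)}\bigr)^2$, when $t<0$. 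The first term is the support-mismatch contribution, responsible for the irregularity of the problem; the second is the smooth ``bulk'' contribution.

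For the support-mismatch term, observe that the log-Lipschitz property (constant $\alpha+S$) established in Lemma~\ref{lem:Esscher} gives $\eta(y)\ge\eta(0)\,e^{-(\alpha+S)y}$ for every $\eta\in H$; integrating against $dy$ and using $\int_0^\infty\eta=1$ yields $\sup_{\eta\in H}\eta(0)\le\alpha+S$. Consequently $\int_0^{|t|}\eta(y)\,dy\le(\alpha+S)|t|\le(\alpha+S)M/n$, uniformly in $\eta$. For the bulk term, write $\eta(y-t)=\eta(y)\,e^{g(y,t)}$ with $|g(y,t)|\le(\alpha+S)|t|$, again by Lemma~\ref{lem:Esscher}; then $\bigl(\sqrt{\eta(y-t)}-\sqrt{\eta(y)}\bigr)^2=\eta(y)\bigl(e^{g(y,t)/2}-1\bigr)^2$, and since $(e^u-1)^2\le e^{2|u|}u^2$ and $(\alpha+S)M/n\le1$ for $n$ large, this is at most $C(\alpha+S)^2t^2\,\eta(y)$. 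Integrating and using $\int\eta\le1$ bounds the bulk term by $C(\alpha+S)^2t^2$, again uniformly in $\eta$. Combining, $H^2\bigl(P_{\tht_0+t,\eta},P_{\tht_0,\eta}\bigr)\le(\alpha+S)M/n+C(\alpha+S)^2M^2/n^2$ uniformly over $\eta\in H$ and $|t|\le M/n$, whence $n^{1/2}\sup_{\eta\in H}H(P_{\tht_n(h_n),\eta},P_{\tht_0,\eta})=O(1)$.

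The only genuinely delicate point is the first, linear-in-$t$ term: unlike in regular (LAN) settings where $H^2\lesssim t^2$, here the differing supports of $P_{\tht_0+t,\eta}$ and $P_{\tht_0,\eta}$ force an $H^2\asymp|t|$ contribution, which is exactly what produces the $n^{-1}$ rate of Theorem~\ref{thm:SBvM-Example} rather than $n^{-1/2}$. What makes the bound uniform over the nuisance space is the uniform bound $\sup_{\eta\in H}\eta(0)<\infty$ on the jump sizes, information available only through the explicit Esscher parametrisation (equivalently, the log-Lipschitz constant) of $H$; without such control the support-mismatch term could not be dominated uniformly, and condition~\emph{(v)} of Corollary~\ref{cor:RateFree} could fail.
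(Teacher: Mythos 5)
Your proof is correct and reaches the same $O(n^{-1})$ bound, via the same high-level decomposition into a support-mismatch term $\int_0^{|t|}\eta$ and a smooth ``bulk'' term, but the route through the bulk term is genuinely different. The paper writes $\eta^{1/2}(x-\tht_0)-\eta^{1/2}(x-\tht_n(h_n))$ as an integral of $\eta'/\eta^{1/2}$, applies Jensen's inequality, swaps the order of integration, and controls the result through the Fisher-information-type integrals $\int|\eta'/\eta|\,\eta$ and $\int(\eta'/\eta)^2\,\eta$, finally invoking the Lipschitz bound $|\eta'/\eta|\leq\alpha+S$ at the very end (the paper writes $\alpha-S$ there, which appears to be a sign slip; since $|\score|\leq S$ and $\alpha>S>0$ one has $|\score-\alpha|\leq\alpha+S$). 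You instead invoke the log-Lipschitz bound directly, writing $\eta(y-t)=\eta(y)e^{g(y,t)}$ with $|g|\leq(\alpha+S)|t|$ and applying the elementary inequality $(e^u-1)^2\leq e^{2|u|}u^2$; this is somewhat shorter and, since it requires no differentiation-under-the-square-root or Fubini step, arguably more robust. A further, and worthwhile, refinement in your version is that you explicitly establish $\sup_{\eta\in H}\eta(0)\leq\alpha+S$ from the log-Lipschitz lower bound $\eta(y)\geq\eta(0)e^{-(\alpha+S)y}$ and normalisation, whereas the paper's final bound $2\gamma_{\tht_0,\eta}M/n + O(n^{-2})$ leaves the uniform control on $\gamma_{\tht_0,\eta}=\eta(0)$ implicit. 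One very small gap: the step ``consequently $\int_0^{|t|}\eta(y)\,dy\leq(\alpha+S)|t|$'' needs that $\eta$ is nonincreasing (so that $\eta(y)\leq\eta(0)$ on $[0,|t|]$); this is indeed property~(i) of Lemma~\ref{lem:Esscher}, but you should cite it, since $\eta(0)\leq\alpha+S$ alone does not control $\eta$ away from the origin.
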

\begin{proof}
Fix $n$ and $\omega$; write $h_n$ for $h_n(\omega)$. First we consider
the case that $h_n \geq 0$, for $x\geq\tht_0$,
\[
  \begin{split}
  \bigl(\eta^{1/2}(x-&\theta_n(h_n))-\eta^{1/2}(x-\theta_0)\bigr)^2\\
  &= \eta(x-\theta_0)\1_{[\theta_0,\theta_n(h_n)]}(x)
    + (\eta^{1/2}(x-\theta_n(h_n))-\eta^{1/2}(x-\theta_0))^2
      \1_{[\theta_n(h_n),\infty)}(x)
  \end{split}
\]
To upper bound the second term, we use the absolute continuity of $\eta^{1/2}$,
\[
  \bigl|\eta^{1/2}(x-\theta_0)-\eta^{1/2}(x-\theta_n(h_n))\bigr|
    = \frac12\Bigl|\int_{x-\theta_0-\frac{h_n}{n}}^{x-\theta_0}
        \frac{\eta'}{\eta^{1/2}}(y)\,dy\Bigr|
    \leq \frac12\int_0^{\frac{M}{n}}
        \Bigl|\frac{\eta'}{\eta^{1/2}}(z+x-\theta_n(h_n))\Bigr|\, dz,
\]
and then by Jensen's inequality,
\[
  \bigl(\eta^{1/2}(x-\theta_0)-\eta^{1/2}(x-\theta_n(h_n))\bigr)^2
  \leq \frac{M}{4n}\int_0^{\frac{M}{n}}
    \frac{(\eta')^2}{\eta}(z+x-\theta_n(h_n))\, dz.
\]
Similarly for $h_n < 0$ and $x\geq\tht_n(h_n)$,
\[
  \begin{split}
  \bigl(\eta^{1/2}&(x-\theta_0)-\eta^{1/2}(x-\theta_n(h_n))\bigr)^2\\
  &\leq\eta(x-\theta_n(h_n))\1_{[\theta_n(h_n),\theta_0]}(x)
    -\eta(x-\theta_n(-M))\1_{[\theta_n(-M),\theta_0]}(x)\\
  &\qquad +\eta(x-\theta_n(-M))\1_{[\theta_n(-M),\theta_0]}
    +\frac{M}{4n}\int_0^{\frac{M}{n}}
       \frac{(\eta')^2}{\eta}(z+x-\theta_0)\,dz\,\1_{[\theta_0,\infty)}(x).
  \end{split}
\]
Combining these results, we obtain a bound for the squared Hellinger distance:
\begin{equation}
  \label{eq:Hdistance}
  \begin{split}
    H^2(P_{\theta_n(h_n),\eta}&,P_{\theta_0,\eta})
    \leq \int_{\theta_0}^{\theta_n(M)} \eta(x-\theta_0) \, dx
      + \int_{\theta_n(-M)}^{\theta_0} \eta(x-\theta_n(-M)) \, dx \\
    &+\1_{\{h_n<0\}}\int_{\theta_n(h_n)}^{\theta_0}
        \eta(x-\theta_n(h_n)) \, dx
        -\1_{\{h_n<0\}}\int_{\theta_n(-M)}^{\theta_0} \eta(x-\theta_n(-M))\,dx\\
    &+\1_{\{h_n \geq 0\}}\int_{\theta_n(h_n)}^{\infty}\frac{M}{4n}
        \int_0^{\frac{M}{n}}\frac{(\eta')^2}{\eta}(z+x-\theta_n(h_n))\,dz\,dx\\
    &+\1_{\{h_n < 0\}}\int_{\theta_0}^{\infty}\frac{M}{4n}
        \int_0^{\frac{M}{n}}\frac{(\eta')^2}{\eta}(z+x-\theta_0)\,dz\,dx.
  \end{split}
\end{equation}
As for the first two terms on the right-hand side of (\ref{eq:Hdistance}),
we note the following inequality:
\[
  \int_{\theta_0}^{\theta_n(M)} \eta(x-\theta_0)\,dx
    +\int_{\theta_n(-M)}^{\theta_0} \eta(x-\theta_n(-M))\,dx
 \leq 2\gamma_{\tht_0,\eta}\frac{M}{n} 
  + \frac{M^2}{n^2}\int_0^\infty|\eta'(y)|\, dy,
\]
by Lemma~\ref{lem:IntBounds}. Furthermore, by shifting appropriately,
we find that the third and fourth term of (\ref{eq:Hdistance}) satisfy
the bound,
\[
  \begin{split}
  \1_{\{h_n<0\}}\Bigl(\int_{\theta_n(h_n)}^{\theta_0} &\eta(x-\theta_n(h_n))\,dx
    -\int_{\theta_n(-M)}^{\theta_0}\eta(x-\theta_n(-M))\,dx\Bigr)\\
  &=\1_{\{h_n<0\}}\Bigl(\int_{0}^{-\frac{h_n}{n}}\eta(y)\,dy
    -\int_{0}^{\frac{M}{n}}\eta(y)\,dy\Bigr)
  = -\1_{\{h_n<0\}}\int_{-\frac{h_n}{n}}^{\frac{M}{n}}\eta(y)\,dy\leq 0,
  \end{split}
\]
(where it is noted that the $h_n$ dependent integral in the above display
is well defined for any $h_n$). Finally, the fifth and sixth term
of (\ref{eq:Hdistance}) are bounded by the Fisher information for
location associated with $\eta$:
\[
  \int_0^{\infty}\int_0^{\frac{M}{n}}\frac{(\eta')^2}{\eta}(z+x)\,dz\,dx
  = \int_0^{\frac{M}{n}}\int_z^{\infty} \frac{(\eta')^2}{\eta}(x)\, dx\, dz
  \leq \frac{M}{n}\int_0^\infty \frac{(\eta')^2}{\eta}(x)\, dx;
\]
Combining, we obtain the following upper bound for the relevant
Hellinger distance,
\[
  H^2(P_{\theta_n(h_n),\eta},P_{\theta_0,\eta})
  \leq 2\gamma_{\tht_0,\eta}\frac{M}{n}
   + 2\frac{M^2}{n^2}
     \biggl(\int_0^\infty\Bigl|\frac{\eta'(x)}{\eta(x)}\Bigr|\eta(x)\,dx
   + \int_0^\infty \Bigl(\frac{\eta'(x)}{\eta(x)}\Bigr)^2\eta(x)\, dx\biggr).
\]
which proves the lemma upon noting that
$|\eta'(x)|=\eta(x)|\score(x)-\alpha|\leq\eta(x)(\alpha-S)$.
\end{proof}
To verify condition {\em (vi)} of Corollary~\ref{cor:RateFree} we now check 
condition (\ref{eq:Condition}) of Lemma~\ref{lem:MarginalLR}.
\begin{lemma}
\label{lem:LR}
Let $(M_n)$, $M_n\to\infty$, $M_n \leq n$ for $n \geq 1$, $M_n=o(n)$ be given. 
Then there exists a constant $C>0$ such that the condition of 
Lemma~\ref{lem:MarginalLR} is satisfied.
\end{lemma}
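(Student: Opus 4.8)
The plan is to verify the model condition (\ref{eq:Condition}) of Lemma~\ref{lem:MarginalLR} directly, with the explicit constant $C=\alpha-S>0$. Since the denominator appearing in (\ref{eq:Condition}) is $p_{\tht_0,\eta}$ rather than $p_{\tht_0,\eta_0}$, the nuisance enters only through the single density $\eta$, and the whole estimate reduces to controlling $\frac1n\sum_i\log\bigl(\eta(X_i-\tht)/\eta(X_i-\tht_0)\bigr)$ uniformly over $\eta\in H$ and $\tht\in\Tht_n^c$. I would split $\Tht_n^c$ into the two half-lines $\{\tht<\tht_0-M_n/n\}$ and $\{\tht>\tht_0+M_n/n\}$; the first piece will be bounded \emph{deterministically}, the second on a high-probability event.

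For the left piece, the key step is a uniform slope bound. Since $H$ is the image of $\scrL$ under the Esscher transform (\ref{eq:Esscher}), every $\eta\in H$ equals $\eta_\score$ with $\|\score\|_\infty\le S<\alpha$, so $(\log\eta)'(t)=\eta'(t)/\eta(t)=\score(t)-\alpha\le-(\alpha-S)$ for all $t\ge0$, uniformly in $\eta$. Hence, for $y\ge0$ and $\delta\ge0$,
\[
  \log\eta(y+\delta)-\log\eta(y)=\int_y^{y+\delta}(\log\eta)'(t)\,dt\le-(\alpha-S)\,\delta .
\]
Because $X_i\ge\tht_0$ $P_0$-a.s., for any $\tht\in\Tht$ with $\delta:=\tht_0-\tht>M_n/n$ this yields, taking $y=X_i-\tht_0\ge0$,
\[
  \PP_n\log\frac{p_{\tht,\eta}}{p_{\tht_0,\eta}}
   =\frac1n\sum_{i=1}^n\bigl(\log\eta(X_i-\tht_0+\delta)-\log\eta(X_i-\tht_0)\bigr)
   \le-(\alpha-S)\,\delta<-\frac{(\alpha-S)M_n}{n},
\]
with no exceptional null set and uniformly over $\eta\in H$.

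For the right piece I would invoke tightness of $\Delta_n=n(X_{(1)}-\tht_0)$ (it converges weakly to $\Exp_{0,\eta_0(0)}$, hence is $O_{P_0}(1)$), so that $\Omega_n:=\{\Delta_n<M_n\}$ has $P_0^n$-probability tending to $1$ as $M_n\to\infty$. On $\Omega_n$, any $\tht\in\Tht$ with $\tht-\tht_0>M_n/n$ satisfies $\tht>X_{(1)}$, so $X_{(1)}$ lies outside the support $[\tht,\infty)$ of $p_{\tht,\eta}$ for every $\eta$; thus $p_{\tht,\eta}(X_{(1)})=0$ and $\PP_n\log(p_{\tht,\eta}/p_{\tht_0,\eta})=-\infty\le-CM_n/n$. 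Combining the two pieces, on $\Omega_n$ the supremum over $\eta\in H$ and $\tht\in\Tht_n^c$ of $\PP_n\log(p_{\tht,\eta}/p_{\tht_0,\eta})$ is at most $-(\alpha-S)M_n/n$, and since $P_0^n(\Omega_n)\to1$ the condition of Lemma~\ref{lem:MarginalLR} holds with $C=\alpha-S$.

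There is no genuine analytic obstacle here: the monotone-slope estimate makes the left half-line deterministic, and the support constraint together with the weak convergence of $\Delta_n$ disposes of the right half-line for free. The only points needing (minor) care are keeping the slope bound uniform over $\eta$, which is automatic from $\|\score\|_\infty\le S<\alpha$, and recalling that the sign of $h$ has been maintained so that the relevant LAE support statistic is $\Delta_n=n(X_{(1)}-\tht_0)$.
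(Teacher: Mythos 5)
Your proposal is correct and follows essentially the same two-piece strategy as the paper's proof: split $\Tht_n^c$ into the left half-line (treated deterministically via the uniform negative slope of $\log\eta$) and the right half-line (dispatched by showing $X_{(1)}>\tht_0+M_n/n$ happens with vanishing probability). The only minor departures are that you bound the left piece directly through $(\log\eta)'=\score-\alpha\le-(\alpha-S)$ rather than reducing to the endpoint $\tht^*=\tht_0-M_n/n$ by monotonicity of $\tht\mapsto p_{\tht,\eta}(x)$, and that you invoke uniform tightness of $\Delta_n$ from the already-established LAE expansion for the right piece instead of the paper's self-contained tail estimate $P^n_0(X_{(1)}\ge\tht_0+M_n/n)=(1-\int_0^{M_n/n}\eta_0)^n\to0$ via Lemma~\ref{lem:IntBounds}; both routes are valid and yield the same constant $C\le\alpha-S$.
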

\begin{proof}
Note first that for fixed $x$ and $\eta$, the map $\theta\mapsto p_{\theta,\eta}(x)$ is 
monotone increasing. Therefore
\[
	\sup_{\tht\in\Tht_n^c}\frac{1}{n}
       \log\prod_{i=1}^n\frac{p_{\tht,\eta}}{p_{\tht_0,\eta}}(X_i)
	 \leq \frac{1}{n}\log\prod_{i=1}^n\frac{\eta(X_i-\tht^*)}{\eta(X_i-\tht_0)}
       \1_{\{X_{(1)}\geq\tht^*\}}(\sample_n),
\]
where $\tht^* = X_{(1)}$ if $X_{(1)} \geq \tht_0+M_n/n$, or 
$\tht_0-M_n/n$ otherwise. We first note that $X_{(1)} < \tht_0+M_n/n$
with probability tending to one. Indeed, shifting the distribution to 
$\tht=0$, we calculate,
\[
  P^n_{0,\eta_0} \Bigl(X_{(1)} \geq \frac{M_n}{n}\Bigr)
    = \Bigl(1- \int_0^{\frac{M_n}{n}}\eta_0(x)\, dx\Bigr)^n
    \leq \exp\Bigl(-n\int_0^{\frac{M_n}{n}}\eta_0(x)\, dx\Bigr).
\]
By Lemma~\ref{lem:IntBounds}, the right-hand side of the above display is bounded
further as follows,
\[
  \exp\Bigl(-\gamma_{\tht_0,\eta_0}M_n
    + M_n\int_0^{\frac{M_n}{n}} |\eta_0'(x)|\, dx\Bigr)
  \leq \exp\Bigl(-\frac{\gamma_{\tht_0,\eta_0}}{2} M_n\Bigr),
\]
for large enough $n$. We continue with $\tht^*=\tht_0-M_n/n$. 
By absolute continuity of $\eta$ we have
\[
	\eta(X_i-\tht^*) = \eta(X_i-\tht_0) 
       + \int_{X_i-\tht_0}^{X_i-\tht^*}\eta'(y)\, dy,
\]
and the conditions on the nuisance $\eta$ yield the following bound,
\[
	\int_{X_i-\tht_0}^{X_i-\tht^*}\eta'(y)\, dy
	 \leq (\tht_0-\tht^*) (S-\alpha)\eta(X_i-\tht_0).
\]
Therefore
\[
	\frac{1}{n}\log\prod_{i=1}^n\frac{\eta(X_i\!-\!\tht^*)}{\eta(X_i\!-\!\tht_0)}
       \1_{\{X_{(1)}\geq\tht^*\}}(\sample_n)\leq
    \frac{1}{n}\log \Bigl(1\!-\!\frac{(\alpha\!-\!S)M_n}{n}\Bigr)^n 
	 \leq -\frac{(\alpha\!-\!S)M_n}{n}.
\]
If $C < \alpha-S$, the condition of Lemma~\ref{lem:MarginalLR} is 
clearly satisfied.
\end{proof}
\whitespace

To demonstrate that priors exist such that $\scrL\subset\supp(\Pi_\scrL)$,
an explicit construction based on the distribution of Brownian sample
paths is provided in the following lemma.
\begin{lemma}
\label{lem:Prior}
Let $S > 0$ be given. Let $\{W_t:t\in[0,1]\}$ be Brownian motion on
$[0, 1]$ and let $Z$ be independent and distributed $N(0,1)$. We
define the prior $\Pi_\scrL$ on $\scrL$ as the distribution of the
process,
\[
  \score(t) = S\Psi(Z+W_{\Psi(t)}),
\]
where $\Psi:[-\infty,\infty]\rightarrow[-1,1]: x\mapsto 2\arctan (x)/\pi$.
Then $\scrL\subset\supp\bigl(\Pi_\scrL\bigr)$.
\end{lemma}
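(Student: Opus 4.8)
The plan is to push all randomness through the homeomorphism $\Psi$ onto $C[0,1]$ and then exploit the fact that the Gaussian ``driving path'' $s\mapsto Z+W_s$ has full topological support there. First I would note that $\Psi$ restricts to a homeomorphism of $[0,\infty]$ onto $[0,1]$, so that $\ell\mapsto\ell\circ\Psi^{-1}$ is an isometric isomorphism of $(C[0,\infty],\|\cdot\|_\infty)$ onto $(C[0,1],\|\cdot\|_\infty)$ carrying $\scrL$ onto the closed ball $\{g\in C[0,1]:\|g\|_\infty\le S\}$; under this identification $\score$ becomes $s\mapsto S\Psi(Z+W_s)$, $s\in[0,1]$. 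Since $Z+W_s$ is finite for every $s$, this process takes values in $\scrL$, so $\Pi_\scrL$ is a Borel probability measure carried by $\scrL$, and it remains to show that for every $g_0\in C[0,1]$ with $\|g_0\|_\infty\le S$ and every $\vep>0$,
\[
  P\Bigl(\,\sup_{s\in[0,1]}\bigl|\,S\Psi(Z+W_s)-g_0(s)\,\bigr|<\vep\,\Bigr)>0.
\]

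Next I would reduce this to an interior target, in order to deal with functions $g_0$ of norm exactly $S$. Choose $\delta\in(0,1)$ with $\delta S<\vep/2$ and set $g_1=(1-\delta)g_0$, so that $\|g_1-g_0\|_\infty<\vep/2$ and $\|g_1\|_\infty\le(1-\delta)S<S$. Because $\Psi^{-1}(y)=\tan(\pi y/2)$ is continuous on the compact interval $[-(1-\delta),1-\delta]$, the function $f_1:=\Psi^{-1}(g_1/S)$ belongs to $C[0,1]$ and satisfies $S\Psi(f_1)=g_1$. Since $|\Psi'|\le 2/\pi$, the superposition operator $f\mapsto S\Psi\circ f$ is $(2S/\pi)$-Lipschitz on $(C[0,1],\|\cdot\|_\infty)$, so choosing $\delta'>0$ with $(2S/\pi)\delta'<\vep/2$ we get that $\|f-f_1\|_\infty<\delta'$ forces $\|S\Psi\circ f-g_1\|_\infty<\vep/2$, hence $\|S\Psi\circ f-g_0\|_\infty<\vep$. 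Thus the event in the display above contains $\{\|Z+W_\cdot-f_1\|_\infty<\delta'\}$.

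It then suffices to show $P(\|Z+W_\cdot-f_1\|_\infty<\delta')>0$, and here the independence of $Z$ and $W$ is used: this probability is bounded below by $P(|Z-f_1(0)|<\delta'/2)\cdot P(\sup_{s\in[0,1]}|W_s-(f_1(s)-f_1(0))|<\delta'/2)$. The first factor is positive because $Z$ has an everywhere positive Lebesgue density---this is precisely where the extra variable $Z$ earns its keep, since it randomizes the value at $s=0$, which $W$ alone cannot do. The second factor is positive because $s\mapsto f_1(s)-f_1(0)$ is continuous and vanishes at the origin, and the topological support of Brownian motion in $C[0,1]$ is exactly $\{\omega\in C[0,1]:\omega(0)=0\}$; concretely, one approximates $f_1-f_1(0)$ uniformly by a piecewise-linear path, which lies in the Cameron--Martin space, and combines the small-ball bound $P(\sup_s|W_s|<\eta)>0$ with Girsanov's theorem. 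This gives the claim.

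The step I expect to be most delicate is exactly the boundary case $\|g_0\|_\infty=S$: the construction only ever produces paths with values in the open interval $(-S,S)$, so one cannot reach such $g_0$ directly and must pass to the $(1-\delta)$-contraction (equivalently, one may invoke the general fact that the support of a push-forward under a continuous map equals the closure of the image of the support, together with $\overline{\{g:\|g\|_\infty<S\}}=\{g:\|g\|_\infty\le S\}$). Everything else---the continuity bookkeeping for $\Psi$ and $\Psi^{-1}$ and the full-support property of $Z+W_\cdot$---is standard.
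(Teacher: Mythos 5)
Your proof is correct, and at the top level it runs on the same two ideas as the paper's: the full topological support of the Gaussian path $s\mapsto Z+W_s$ in $C[0,1]$, and the transfer of that support through the continuous maps built from $\Psi$ onto $\scrL$. Where you differ is in implementation. The paper argues abstractly: it views the prior as a push-forward $\Pi_\scrL=(g\circ f)_{\ast}\Pi$ through the continuous composition $g\circ f$, invokes the general fact that the support of a continuous push-forward contains (the closure of) the image of the support of the original measure, and (somewhat tersely) concludes via the density of the image $g(f(C[0,1]))=\{\ell\in\scrL:\|\ell\|_\infty<S\}$ in the closed ball $\scrL$. You instead carry out a direct $\vep$--$\delta$ estimate for each target $g_0$, and the price you pay is having to manage the boundary case $\|g_0\|_\infty=S$ by contracting to $(1-\delta)g_0$ --- which is exactly the density-of-the-open-ball fact the paper uses implicitly, just made quantitative. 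Your proof also fills in a gap the paper leaves to the reader: the paper takes $\supp(\Pi)=C[0,1]$ as known, whereas you actually prove it, by separating out the independent $N(0,1)$ shift to cover the starting value and then invoking the Cameron--Martin/small-ball argument for the pinned Brownian path. That decomposition is the genuinely nice observation in your write-up --- it isolates precisely why the extra additive $Z$ is needed in the construction --- and it makes the argument self-contained in a way the paper's is not.
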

\begin{proof}
Consider $C[0,1]$ with the uniform norm and its Borel $\sigma$-algebra,
equipped with the law $\Pi$ of $t\mapsto Z+W_t$, as a probability space.
Since $\Psi$ is Lipschitz, the map $f$ that takes $C[0,1]$ into $C[0,\infty]$,
$Z+W_\cdot \mapsto Z+W_{\Psi(\cdot)}$ is continuous, norm-preserving, and
Borel-to-Borel measurable. This enables the view of $C[0,\infty]$ with its
Borel $\sigma$-algebra as a probability space, with probability
measure $\Pi'(B) = \Pi\bigl(f^{-1}(B)\bigr)$. Similarly, the map $g$ that
takes $C[0,\infty]$ into $\scrL$, $Z+W_{\Psi(\cdot)}\mapsto
S\Psi\bigl(Z+W_{\Psi(\cdot)}\bigr)$ is continuous and Borel-to-Borel
measurable. We view $\scrL$ with its Borel $\sigma$-algebra
as a probability space, with probability measure
$\Pi_\scrL(C) = \Pi'\bigl(g^{-1}(C)\bigr)$. Let $T$ denote a closed set
in $\scrL$ such that $\Pi_\scrL(T) = 1$. Note that
$f^{-1}(g^{-1}(T))$ is closed and $\Pi\bigl(f^{-1}(g^{-1}(T))\bigr) = 1$,
so that $\supp(\Pi) \subset f^{-1}(g^{-1}(T))$. Since the support
of $\Pi_\scrL$ equals the intersection of all such $T$, $\supp(\Pi)
\subset \bigcap_T f^{-1}(g^{-1}(T)) = f^{-1}\bigl(g^{-1}\bigl(
\supp\bigl(\Pi_\scrL\bigr)\bigr)\bigr)$. Since $\supp(\Pi) = C[0,1]$,
for every $y \in C[0,1]$, $f(g(y)) \in \supp\bigl(\Pi_\scrL\bigr)$.
The continuity does not change under $g\circ f$, so
$\supp\bigl(\Pi_\scrL\bigr)$ includes $\scrL$. 
\end{proof}

\subsection{Proofs of Subsection~\ref{sub:semiscale}}
\label{sub:proofssemiscale}


Again we first present properties of the mapping defining the nuisance space.
\begin{lemma}
\label{lem:Esscher2}
Define $H$ as the image of $\scrL$ under the map that takes 
$\score\in\scrL$ into densities $\eta_\score$ defined by (\ref{eq:Esscher2}) 
for $x\in[0,1]$. 
This map is uniform-to-Hellinger continuous and
the space $H$ is a collection of probability densities that
are {\it (i)} monotone increasing and bounded away from zero and 
infinity and {\it (ii)} continuously differentiable on $[0,1]$. 
Moreover, the resulting densities $p_{\tht,\eta}$ satisfy the 
log-Lipschitz condition~(\ref{eq:LogLipschitz}) in an $\ep$-neighbourhood 
$(\ep < \tht_0/2)$ with $m_{\tht_0,\eta} = (2+8S)/\tht_0$.
\end{lemma}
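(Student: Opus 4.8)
The plan is to follow the blueprint of the proof of Lemma~\ref{lem:Esscher}, adapting it to the compact-interval, monotone-increasing setting. The starting point is the two elementary bounds that drive everything: $\score\in\scrL$ means $\|\score\|_\infty\le S$, so the exponent $x\mapsto\int_0^x(S+\score(t))\,dt$ appearing in (\ref{eq:Esscher2}) has nonnegative integrand bounded by $2S$, hence is nondecreasing and takes values in $[0,2S]$ on $[0,1]$. Consequently the numerator $e^{Sx+\int_0^x\score}$ of (\ref{eq:Esscher2}) lies in $[1,e^{2S}]$, and the normalising denominator, being the average of this numerator over $[0,1]$, lies in the same interval; therefore every $\eta_\score$ takes values in $[e^{-2S},e^{2S}]$, which is claim \emph{(i)} that the densities are bounded away from $0$ and $\infty$. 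Monotonicity follows at once, since $\log\eta_\score$ has derivative $S+\score\ge0$. For \emph{(ii)}, continuity of $\score$ makes $x\mapsto\int_0^x\score$ of class $C^1$ on $[0,1]$, hence so is the numerator, and dividing by a positive constant gives $\eta_\score\in C^1[0,1]$ with $\eta_\score'=\eta_\score\,(S+\score)$.

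For uniform-to-Hellinger continuity I would argue exactly as in Lemma~\ref{lem:Esscher}: the map $\score\mapsto e^{Sx+\int_0^x\score}$ is uniform-to-uniform continuous on $\scrL$ into $C[0,1]$, because $|\int_0^x(\score-\score')|\le\|\score-\score'\|_\infty$ and the values stay in $[1,e^{2S}]$; the denominators stay uniformly bounded away from $0$, so the quotient $\score\mapsto\eta_\score$ is uniform-to-uniform continuous as well. After the substitution $u=x/\tht_0$ one has $d_H(\eta_\score,\eta_{\score'})^2\propto\int_0^1(\sqrt{\eta_\score(u)}-\sqrt{\eta_{\score'}(u)})^2\,du$, and since $(\sqrt a-\sqrt b)^2\le|a-b|$, this is bounded by a fixed multiple of $\|\eta_\score-\eta_{\score'}\|_\infty$; hence uniform-to-uniform continuity upgrades to uniform-to-Hellinger continuity.

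The remaining, slightly more delicate, point is the log-Lipschitz estimate for the rescaled densities $p_{\tht,\eta}(x)=\tht^{-1}\eta(x/\tht)$ on $[0,\tht]$. Fix $\eta=\eta_\score\in H$ and let $\tht$ satisfy $|\tht-\tht_0|<\ep<\tht_0/2$, so $\tht\in(\tht_0/2,3\tht_0/2)$. For $x\in A_{\tht_0,\eta}=(0,\tht_0)$, the ratio $p_{\tht,\eta}(x)/p_{\tht_0,\eta}(x)$ equals $0$ when $x>\tht$, so the asserted bound holds trivially there; otherwise it equals $\exp\bigl(\int_{\tht_0}^{\tht}\partial_s\log p_{s,\eta}(x)\,ds\bigr)$, where $\partial_s\log p_{s,\eta}(x)=-1/s-(x/s^2)\,(\eta'/\eta)(x/s)$. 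Using $|(\eta'/\eta)(u)|=|S+\score(u)|\le2S$ together with $x\le\tht_0$, $1/s\le2/\tht_0$ and $1/s^2\le4/\tht_0^2$, this derivative is bounded in absolute value by $2/\tht_0+(4/\tht_0^2)\cdot\tht_0\cdot2S=(2+8S)/\tht_0$. Integrating over $s$ between $\tht_0$ and $\tht$ then yields $p_{\tht,\eta}(x)/p_{\tht_0,\eta}(x)\le e^{(2+8S)|\tht_0-\tht|/\tht_0}$, i.e. the log-Lipschitz condition~(\ref{eq:LogLipschitz}) with the constant function $m_{\tht_0,\eta}=(2+8S)/\tht_0$.

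I do not expect a serious obstacle; as in Lemma~\ref{lem:Esscher}, everything reduces to $\|\score\|_\infty\le S$ and the bound $\tht>\tht_0/2$ on the $\ep$-neighbourhood. The one place calling for care is the log-Lipschitz step, where the support of $p_{\tht,\eta}$ moves with $\tht$, so that the estimate is genuinely needed only on $\{x\le\tht\wedge\tht_0\}$ — this is exactly why the hypothesis of Lemma~\ref{lem:DomLemma} carries the restriction $x\in A_{\tht_0,\eta}$ and why the case $x>\tht$ is disposed of trivially — and where using the crude uniform bounds $x\le\tht_0$ and $1/s^2\le4/\tht_0^2$ over the whole $\ep$-neighbourhood is what produces the stated constant $(2+8S)/\tht_0$.
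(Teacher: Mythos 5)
Your argument is correct and is precisely the adaptation of the proof of Lemma~\ref{lem:Esscher} that the paper alludes to (the paper's own proof of Lemma~\ref{lem:Esscher2} is omitted with the remark that it is ``similar''): bounded range of the exponent on the compact interval gives boundedness and monotonicity, $(\sqrt a-\sqrt b)^2\le|a-b|$ turns uniform into Hellinger continuity after rescaling by $\tht_0$, and the Lipschitz estimate comes from bounding $\partial_s\log p_{s,\eta}(x)=-1/s-(x/s^2)(\eta'/\eta)(x/s)$ over $s\in(\tht_0/2,3\tht_0/2)$ using $x\le\tht_0$ and $|\eta'/\eta|\le 2S$. The fact that your mean-value estimate reproduces the stated constant $(2+8S)/\tht_0$ exactly (where a direct computation as in Lemma~\ref{lem:Esscher} would give the slightly sharper $(2+4S)/\tht_0$) confirms that this is the intended derivation.
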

\begin{proof} 
The proof is similar to the proof of Lemma~\ref{lem:Esscher} 
and is therefore omitted.
\end{proof}
The proof of Theorem~\ref{thm:SBvM-Example2} consists of a verification
of the conditions of Corollary~\ref{cor:RateFree} (after the aforementioned 
modification to comply with the positive version of the LAE expansion). The 
following lemmas make the most elaborate steps explicit, as in the proof of 
Theorem~\ref{thm:SBvM-Example}.
\begin{lemma}
\label{lem:Entropy2}
Hellinger covering numbers for $H$ are finite, \ie\ for all $\rho>0$, 
$N(\rho,H,d_H)<\infty$.
\end{lemma}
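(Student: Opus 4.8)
The plan is to reproduce the strategy of Lemma~\ref{lem:Entropy}, using that by Lemma~\ref{lem:Esscher2} the densities in $H$ are monotone increasing and uniformly bounded away from $0$ and $\infty$. The first step is to identify the relevant metric entropy with an $L_2$-entropy on the unit interval: for the scale model a change of variables $u = x/\tht_0$ gives
\[
  d_H(\eta,\eta')^2 = H\bigl(P_{\tht_0,\eta},P_{\tht_0,\eta'}\bigr)^2
  = \int_0^{\tht_0}\Bigl(\sqrt{\tfrac1{\tht_0}\eta(\tfrac{x}{\tht_0})}
      - \sqrt{\tfrac1{\tht_0}\eta'(\tfrac{x}{\tht_0})}\Bigr)^2\,dx
  = \int_0^1\bigl(\sqrt{\eta(u)}-\sqrt{\eta'(u)}\bigr)^2\,du,
\]
so that $N(\rho,H,d_H) = N\bigl(\rho,\scrG,L_2[0,1]\bigr)$, where $\scrG = \{\sqrt{\eta_\score}:\score\in\scrL\}$ and $L_2[0,1]$ is taken with respect to Lebesgue measure (a probability measure on $[0,1]$).

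The second step is to locate $\scrG$ inside a suitable class of monotone functions. From the Esscher formula (\ref{eq:Esscher2}) and $\|\score\|_\infty\le S$ one has $0 \le Sx + \int_0^x\score(t)\,dt \le 2Sx \le 2S$ for all $x\in[0,1]$, so both the numerator and the denominator in (\ref{eq:Esscher2}) lie in $[1,e^{2S}]$, whence $\eta_\score(x)\in[e^{-2S},e^{2S}]$ uniformly in $\score\in\scrL$ and $x\in[0,1]$. Combined with the monotonicity assertion of Lemma~\ref{lem:Esscher2}, this shows that $\scrG$ is contained in the collection of monotone increasing functions $[0,1]\to[e^{-S},e^{S}]$. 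By Theorem~2.7.5 in van~der~Vaart and Wellner (1996) \cite{vdVaart96}, the $L_2[0,1]$-bracketing entropy $N_{[\,]}(\ep,\scrG,L_2[0,1])$ of such a uniformly bounded class of monotone functions is finite for every $\ep>0$ (of order $e^{K/\ep}$ for a constant $K$ depending only on $S$). Since covering numbers are dominated by bracketing numbers, $N(\rho,\scrG,L_2[0,1]) \le N_{[\,]}(2\rho,\scrG,L_2[0,1]) < \infty$, and therefore $N(\rho,H,d_H)<\infty$ for every $\rho>0$, as claimed.

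There is no genuine obstacle in this argument; the only point requiring a line of care is the two-sided bound $e^{-2S}\le\eta_\score\le e^{2S}$ holding \emph{uniformly} over $\scrL$, which is what lets the monotone-function entropy estimate be applied with constants independent of $\score$, and this follows at once from $\|\score\|_\infty\le S$ as above. In contrast to Lemma~\ref{lem:Entropy}, the compactness of the domain $[0,1]$ here makes the choice of reference measure trivial (plain Lebesgue measure suffices), so no auxiliary exponential reference distribution $Q$ is needed.
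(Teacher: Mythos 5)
Your argument is correct, but it follows a different route than the paper's own proof of this lemma. The paper normalises by $\eta_0$, forms the family $\scrF=\{\sqrt{\eta_\score/\eta_0}:\score\in\scrL\}$, observes that $\scrF$ lies in a fixed ball $C^1_M([0,1])$ of continuously differentiable functions (since $(\sqrt{\eta_\score/\eta_0})' = \tfrac12(\score-\score_0)\sqrt{\eta_\score/\eta_0}$ is uniformly bounded), and then invokes Corollary~2.7.2 of van~der~Vaart and Wellner for the $L_2(Q)$-bracketing entropy of a smooth class, with $Q$ the $\eta_0$-distribution. You instead keep $\sqrt{\eta_\score}$ itself, work directly with Lebesgue measure on $[0,1]$, use the two-sided uniform bound $e^{-2S}\le\eta_\score\le e^{2S}$ plus monotonicity to place $\scrG=\{\sqrt{\eta_\score}\}$ inside a bounded class of monotone functions, and invoke Theorem~2.7.5 — exactly the route the paper takes for the \emph{shift} model in Lemma~\ref{lem:Entropy}. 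Both are valid; the monotone-class argument is slightly leaner here because it exploits less structure (monotonicity and boundedness only, not differentiability) and requires no auxiliary reference density. Your final observation — that the compactness of $[0,1]$ makes the reference measure trivial, unlike the half-line case of Lemma~\ref{lem:Entropy} where an exponential $Q$ is needed — is precisely the right point to note when transferring that argument.

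Two small remarks: the isometry $d_H(\eta,\eta')=\|\sqrt\eta-\sqrt{\eta'}\|_{L_2[0,1]}$ identifying $N(\rho,H,d_H)$ with $N(\rho,\scrG,L_2[0,1])$ is correct as you wrote it and mirrors the reduction in the paper; and the monotonicity of $\sqrt{\eta_\score}$ is immediate from $\eta_\score'/\eta_\score = S+\score\ge 0$ on $\scrL$, which is what Lemma~\ref{lem:Esscher2} records.
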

\begin{proof}
Denote by $Q$ the distribution with density $\eta_0=\eta_{\score_0}$. Then 
the family $\scrF = \{x\mapsto \sqrt{\eta_\score/\eta_0} :\score\in\scrL\}$ 
forms a subset of the collection $C^1_M([0,1])$, where $M$ is fixed and 
depends on $S$. Referring to Corollary~2.7.2 in \cite{vdVaart96}, we conclude 
that the $L_2(Q)$-bracketing entropy $N_{[\, ]}(\ep,\scrF,L_2(Q))$ of $\scrF$ 
is finite for all $\ep>0$. 
Similarly as in the proof of Lemma~\ref{lem:Entropy2}, 
it follows that $N(\rho,H,d_H)=N(\rho,\scrF,L_2(Q))\leq
N_{[\, ]}(2\rho,\scrF,L_2(Q))<\infty$.
\end{proof}
The previous lemma together with the following lemma verify 
conditions \emph{(i)} and \emph{(ii)} of Corollary~\ref{cor:RateFree}.
\begin{lemma}
\label{lem:SupInKL2}
For every $M > 0$ there exist constants $L_1, L_2>0$ such that for small 
enough $\rho>0$,
$\{ \eta_\score \in H :\|\score-\score_0\|_\infty\leq\rho^2 \} \subset
K(L_1\rho) \subset K_n(L_2\rho,M)$.
\end{lemma}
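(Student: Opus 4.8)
The plan is to follow the proof of Lemma~\ref{lem:SupInKL} step by step, replacing the shifted densities by the scaled densities (\ref{eq:scaledens}); the compact nuisance support $[0,1]$ will actually shorten the first inclusion, and the only genuine novelty is a small boundary-layer estimate near the moving endpoint $\tht$ of the data support. Throughout I would abbreviate $z(\score)=\log\int_0^1 e^{Sy+\int_0^y\score(t)\,dt}\,dy$ for the log-normaliser of (\ref{eq:Esscher2}), so that on $[0,\tht_0]$
\[
  \log\frac{p_{\tht_0,\eta}}{p_{\tht_0,\eta_0}}(x)
    =\log\frac{\eta_\score}{\eta_{\score_0}}\Bigl(\frac{x}{\tht_0}\Bigr)
    =\int_0^{x/\tht_0}(\score-\score_0)(t)\,dt-z(\score)+z(\score_0).
\]

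For the first inclusion I would assume $\|\score-\score_0\|_\infty\le\rho^2$. Then $\bigl|\int_0^u(\score-\score_0)\bigr|\le\rho^2$ for all $u\in[0,1]$, so the two normalisers in (\ref{eq:Esscher2}) differ by a factor in $[e^{-\rho^2},e^{\rho^2}]$ and $|z(\score)-z(\score_0)|\le\rho^2$; hence $\bigl|\log(p_{\tht_0,\eta}/p_{\tht_0,\eta_0})(x)\bigr|\le 2\rho^2$ uniformly over $x\in[0,\tht_0]$. Both quantities defining the Kullback--Leibler ball are then controlled at once, $-P_0\log(p_{\tht_0,\eta}/p_{\tht_0,\eta_0})\le 2\rho^2$ and $P_0(\log(p_{\tht_0,\eta}/p_{\tht_0,\eta_0}))^2\le 4\rho^4$, so for $\rho$ small $\{\eta_\score:\|\score-\score_0\|_\infty\le\rho^2\}\subset K(L_1\rho)$ with $L_1=2$. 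No moment bounds in the spirit of (\ref{eq:powers}) are needed here, precisely because the support is compact.

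For the second inclusion I would fix $M>0$, take $\eta\in K(L_1\rho)$, and restrict to $n$ large enough that $\tht_n(h)=\tht_0+n^{-1}h$ lies in the $\ep$-neighbourhood of Lemma~\ref{lem:Esscher2} for every $|h|\le M$. Exactly as in Lemma~\ref{lem:SupInKL} I would decompose $-\log(p_{\tht_n(h),\eta}/p_{\tht_0,\eta_0})=\log(p_{\tht_0,\eta_0}/p_{\tht_0,\eta})+\log(p_{\tht_0,\eta}/p_{\tht_n(h),\eta})$; the log-Lipschitz property of Lemma~\ref{lem:Esscher2}, applied at the base points $\tht_n(h)$ and $\tht_0$ (whose Lipschitz constants are comparable since $\ep<\tht_0/2$), gives $\log(p_{\tht_0,\eta}/p_{\tht_n(h),\eta})(x)=O(M/n)$ uniformly in $\eta$ and in $|h|\le M$, wherever both densities are positive. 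On $\{x\le\tht_0-M/n\}$ every $|h|\le M$ keeps $x$ inside $A_{\tht_n(h),\eta}$, so there $\sup_{|h|\le M}(-\1_{A_{\tht_n(h),\eta}}\log(p_{\tht_n(h),\eta}/p_{\tht_0,\eta_0})(x))\le\log(p_{\tht_0,\eta_0}/p_{\tht_0,\eta})(x)+O(M/n)$, and integrating against $P_0$ (using $\eta\in K(L_1\rho)$, and for the second moment the Cauchy--Schwarz bound $P_0\log(p_{\tht_0,\eta_0}/p_{\tht_0,\eta})\le(P_0(\log(p_{\tht_0,\eta}/p_{\tht_0,\eta_0}))^2)^{1/2}\le L_1\rho$) reproduces the bounds $L_1^2\rho^2+o(1)$, as in the shift case.

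The only genuinely new point — and the mild obstacle — is the boundary layer $x\in(\tht_0-M/n,\tht_0]$. There, for $h<0$ with $\tht_n(h)<x$, the point $x$ lies outside the shrunken support $[0,\tht_n(h)]$, so $p_{\tht_n(h),\eta}/p_{\tht_0,\eta_0}(x)=0$ and $\1_{A_{\tht_n(h),\eta}}(x)=0$: that term of the supremum equals $0$, not $-\infty$ as happens in the shift model, so on this layer the pointwise bound is only $(\log(p_{\tht_0,\eta_0}/p_{\tht_0,\eta})(x))^+ +O(M/n)$. Since the layer carries $P_0$-mass $O(M/n)$, a Cauchy--Schwarz estimate $\int_{\mathrm{layer}}\bigl|\log(p_{\tht_0,\eta}/p_{\tht_0,\eta_0})\bigr|\,dP_0\le(P_0(\log(p_{\tht_0,\eta}/p_{\tht_0,\eta_0}))^2)^{1/2}(P_0(\mathrm{layer}))^{1/2}=O(\rho\sqrt{M/n})$ makes its contribution negligible, so both right-hand sides stay of the form $L_1^2\rho^2+o(1)$ and, for $n$ large, are bounded by $(L_2\rho)^2$ with $L_2=\sqrt{L_1^2+2}$. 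Granting Lemma~\ref{lem:Esscher2}, every remaining step is the routine bookkeeping of the shift proof.
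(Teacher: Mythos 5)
Your proposal is correct and in substance mirrors the paper's proof, but with two noteworthy departures. For the first inclusion the paper copies the shift-case argument of Lemma~\ref{lem:SupInKL} almost verbatim: it Taylor-expands the log-normaliser $z(\alpha,\score)$ around $\alpha=S$, using $d^kz/d\alpha^k(S,\score_0)=P_0(X/\tht_0)^k$, and then bounds moments of $\int_0^{X/\tht_0}(\score-\score_0)$ via~(\ref{eq:powers2}). You instead exploit the compact nuisance support $[0,1]$ to get the uniform bounds $|\int_0^{x/\tht_0}(\score-\score_0)|\le\rho^2$ and $|z(\score)-z(\score_0)|\le\rho^2$, hence the pointwise bound $|\log(p_{\tht_0,\eta}/p_{\tht_0,\eta_0})|\le2\rho^2$; this makes the first inclusion elementary and eliminates the Taylor step and the moment machinery entirely, which is a genuine simplification the paper does not exploit. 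For the second inclusion you are in fact more careful than the paper: the chain of identities in Lemma~\ref{lem:SupInKL} (which Lemma~\ref{lem:SupInKL2} inherits by reference) pulls the $h$-independent factor $\log(p_{\tht_0,\eta_0}/p_{\tht_0,\eta})$ out of the $\sup_{|h|\le M}$ as though $\1_{A_{\tht_n(h),\eta}}\equiv1$, ignoring the boundary layer of $P_0$-mass $O(M/n)$ on which some indicators vanish and the supremum is consequently truncated at zero. Your explicit positive-part plus Cauchy--Schwarz estimate, with error $O(\rho\sqrt{M/n})$, closes that gap cleanly, and the correction is indeed negligible for $n$ large (given $\rho$), so the constants still exist. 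The only blemish is the aside claiming the boundary term is ``$-\infty$ in the shift model'': in both models the indicator annihilates the divergent $-\log 0$, and the boundary issue is structurally identical (there the layer is $(\tht_0,\tht_0+M/n)$ for $h>0$). This does not affect your estimate, which is sound.
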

\begin{proof}
Let $\rho>0$ and $\score\in\scrL$ such that
$\|\score-\score_0\|_\infty\leq\rho^2$ be given.
Then,
\begin{equation}
 \label{eq:diffs2}
 \Bigl| \,\,\log\frac{p_{\tht_0,\eta}}{p_{\tht_0,\eta_0}}(x) 
   -\int_0^{x/\tht_0} (\score-\score_0)(t)\,dt \,\, \Bigr|
   \leq \rho^2\,P_0(X/\tht_0) + O(\rho^4),
\end{equation}
for all $x\in[0,\tht_0]$. Define, for all $\alpha\in\RR$ and $\score\in\scrL$,
\[
  z(\alpha,\score) = \log\int_0^1 e^{\alpha y + \int_0^y \score(t)\, dt}\, dy.
\]
Then the relevant log-density-ratio can be written as,
\[
 \log\frac{p_{\tht_0,\eta}}{p_{\tht_0,\eta_0}}(x)
   = \int_0^{x/\tht_0}(\score-\score_0)(t)\,dt 
     - z(S,\score) + z(S,\score_0),
\]
where only the first term is $x$-dependent. 
Assume that $\score\in\scrL$ is such that $\|\score-\score_0\|_\infty<\rho^2$.
Then, $|\int_0^{y}(\score-\score_0)(t)\,dt|\leq \rho^2y$,
so that $z(S-\rho^2,\score_0)\leq z(S,\score)\leq
z(S+\rho^2,\score_0)$. Noting that
$d^kz/d{\alpha}^k(S,\score_0)=P_0(X/\tht_0)^k<\infty$
and using the first-order Taylor expansion of $z$ in $\alpha$, we find,
$z(S\pm\rho^2,\score_0) = z(S,\score_0)\pm
\rho^2\,P_0(X/\tht_0) + O(\rho^4)$, and (\ref{eq:diffs2}) follows.

Next note that, for every
$k\geq1$,
\begin{equation}
 \label{eq:powers2}
  \Biggl|\,\,
   P_0\Bigl(\int_0^{X/\tht_0}(\score-\score_0)(t)\,dt\Bigr)^k\,\,\Biggr|
   \leq \rho^{2k} \int_{0}^{\tht_0}\Bigl(\int_{0}^{x/\tht_0}dy\Bigr)^k\,dP_0
   = \rho^{2k}\,P_0(X/\tht_0)^k,
\end{equation}
Using (\ref{eq:diffs2}) we bound the differences between KL
divergences and integrals of scores
and, combining with the bounds (\ref{eq:powers2}), we see that,
\[
 \begin{split}
 -P_0\log\frac{p_{\tht_0,\eta}}{p_{\tht_0,\eta_0}}
   &\leq 2\rho^2\bigl( P_0(X/\tht_0) + O(\rho^2) \bigr),\\
 P_0\Bigl(\log\frac{p_{\tht_0,\eta}}{p_{\tht_0,\eta_0}}\Bigr)^2 
   &\leq
     \rho^4\bigl( P_0(X/\tht_0)^2 + 3P_0(X/\tht_0) + O(\rho^2)\bigr),
 \end{split}
\]
which proves the first inclusion. Let $M > 0$. Similarly as in the proof 
of Lemma~\ref{lem:SupInKL} we can show that
\[
  \begin{split}
  P_0\Bigl(\sup_{|h|\leq M} &-\1_{A_{\theta_n(h),\eta}}
    \log \frac{p_{\theta_n(h),\eta}}{p_{\theta_0,\eta_0}}\Bigr)
  \leq -P_0 \log \frac{p_{\theta_0,\eta}}{p_{\theta_0,\eta_0}}
    + \frac{2+8S}{\tht_0}\frac{M}{n},\\
  P_0\Bigl(\sup_{|h|\leq M} &-\1_{A_{\theta_n(h),\eta}}
    \log\frac{p_{\theta_n(h),\eta}}{p_{\theta_0,\eta_0}}\Bigr)^2\\
  &\leq  P_0 \Bigl( \log \frac{p_{\theta_0,\eta}}{p_{\theta_0,\eta_0}} \Bigr)^2
    + \frac{4+16S}{\tht_0}\frac{M}{n}
      \Bigl[P_0\Bigl(
      \log \frac{p_{\tht_0,\eta}}{p_{\tht_0,\eta_0}}\Bigr)^2\Bigr]^{1/2}
    +\frac{(2+8S)^2}{\tht_0^2}\frac{M^2}{n^2},
  \end{split}
\]
implying the existence of a constant $L_2$.
\end{proof}
By Lemma~\ref{lem:Esscher2} the model satisfies the Lipschitz condition
of Lemma~\ref{lem:DomLemma} with the same Lipschitz constant for every 
$\eta \in H$, so that the domination condition \emph{(iii)} and contiguity 
requirement \emph{(iv)} of Corollary~\ref{cor:RateFree} are satisfied.
\begin{lemma}
\label{lem:Hellinger2}
For all bounded, stochastic sequences $(h_n)$, Hellinger distances
between $P_{\tht_n(h_n),\eta}$ and $P_{\tht_0,\eta}$ are of order $n^{1/2}$
uniformly in $\eta$, \ie\
$\sup_{\eta\in H} n^{1/2}\,H(P_{\theta_n(h_n),\eta},P_{\theta_0,\eta}) = O(1)$.
\end{lemma}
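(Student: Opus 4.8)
The strategy is to parallel the proof of Lemma~\ref{lem:Hellinger}, taking advantage of the fact that, by Lemma~\ref{lem:Esscher2}, every $\eta\in H$ is monotone increasing, continuously differentiable, and bounded above and below by constants depending only on $S$, and that $\|\score\|_\infty\leq S$. As in that proof I would fix $n$ and a sample point, write $h_n$ for its value so that $|h_n|\leq M$ for a fixed $M>0$, and then argue deterministically; since every bound below depends on the sequence only through $M$, the conclusion transfers to all bounded, stochastic $(h_n)$. Set $\tht_n=\tht_0+n^{-1}h_n$ and $q_t(x)=t^{-1}\eta(x/t)$, so $p_{\tht,\eta}=q_\tht\1_{[0,\tht]}$; for $n$ large $\tht_n\in[\tht_0/2,2\tht_0]$, which is all I use.

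First I split $H^2(P_{\tht_n,\eta},P_{\tht_0,\eta})$ according to the sign of $h_n$. When $h_n\geq0$ the supports satisfy $[0,\tht_0]\subset[0,\tht_n]$, so $H^2(P_{\tht_n,\eta},P_{\tht_0,\eta})=\int_0^{\tht_0}\bigl(\sqrt{q_{\tht_0}(x)}-\sqrt{q_{\tht_n}(x)}\bigr)^2\,dx+\int_{\tht_0}^{\tht_n}q_{\tht_n}(x)\,dx$, and when $h_n<0$ the roles of $\tht_0$ and $\tht_n$ are exchanged. After the substitution $y=x/(\tht_0\vee\tht_n)$, the boundary term equals $\int_{(\tht_0\wedge\tht_n)/(\tht_0\vee\tht_n)}^1\eta(y)\,dy$, which by monotonicity of $\eta$ is at most $\eta(1)\cdot\frac{|h_n|/n}{\tht_0\wedge\tht_n}\leq 2\gamma_{\tht_0,\eta}M/n$ for $n$ large, and $\gamma_{\tht_0,\eta}=\eta(1)/\tht_0$ is uniformly bounded over $H$.

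For the common-support integral I estimate the integrand by differentiating in the scale. Since $\eta'(u)=(\score(u)+S)\eta(u)$, a direct computation gives, with $u=x/t$, $\partial_t q_t(x)=-t^{-2}\eta(u)\bigl(1+u(\score(u)+S)\bigr)$ and hence $\partial_t\sqrt{q_t(x)}=-\thalf t^{-3/2}\sqrt{\eta(u)}\bigl(1+u(\score(u)+S)\bigr)$. For $x\leq\tht_0\wedge\tht_n$ and $t$ between $\tht_0$ and $\tht_n$ one has $u\in[0,1]$, so by monotonicity $\sqrt{\eta(u)}\leq\sqrt{\eta(1)}$ and $|1+u(\score(u)+S)|\leq1+2S$, giving $|\partial_t\sqrt{q_t(x)}|\leq C$ for a constant $C$ depending only on $S$ and $\tht_0$. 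Because $\eta\in C^1$ and $\eta$ is bounded below, $t\mapsto\sqrt{q_t(x)}$ is $C^1$ on the interval with endpoints $\tht_0,\tht_n$, so the fundamental theorem of calculus yields $|\sqrt{q_{\tht_0}(x)}-\sqrt{q_{\tht_n}(x)}|\leq C\,|\tht_n-\tht_0|\leq CM/n$; squaring and integrating over an interval of length at most $\tht_0$ bounds the common-support integral by $\tht_0C^2M^2/n^2$. Adding the two contributions, $H^2(P_{\tht_n,\eta},P_{\tht_0,\eta})\leq 2\gamma_{\tht_0,\eta}M/n+O(n^{-2})$ with a constant that does not depend on $\eta$; taking the supremum over $\eta\in H$ and multiplying by $n$ proves the claim.

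The only real point of care is the \emph{uniformity in $\eta$}: it rests entirely on Lemma~\ref{lem:Esscher2}, which furnishes uniform upper and lower bounds for the densities in $H$ and hence controls $\eta(1)=\tht_0\gamma_{\tht_0,\eta}$ and the factor $\sqrt{\eta(1)}$ in the derivative estimate. Apart from that the computation is routine; in particular, unlike the shift case of Lemma~\ref{lem:Hellinger}, no Fisher-information term appears, because here the densities and scores are uniformly bounded rather than merely integrable.
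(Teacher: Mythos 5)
Your proof is correct and follows essentially the same approach as the paper's: split the squared Hellinger integral into the boundary piece (over the symmetric difference of supports) and the common-support piece, bound the former by monotonicity of $\eta$, and control the latter by differentiating $t\mapsto\sqrt{q_t(x)}$ in the scale and applying the fundamental theorem of calculus, with uniformity secured by the uniform bounds on $\eta$ and $\score$ from Lemma~\ref{lem:Esscher2}. The paper's own map $g(y)=(\eta(x/y)/y)^{1/2}$ is exactly your $\sqrt{q_t(x)}$, and the only minor difference is that you bound the boundary term by $\gamma_{\tht_0,\eta}M/n$ (up to a constant) rather than the paper's cruder $e^{2S}/\tht_0$ factor; both yield the required $O(n^{-1})$ bound on $H^2$.
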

\begin{proof}
Note that the elements of the nuisance space $H$ are uniformly bounded by 
$e^{2S}$.
Fix $n$ and $\omega$; write $h_n$ for $h_n(\omega)$. First we consider
the case that $h_n \geq 0$,
\[
  \begin{split}
  \Bigl(\frac{\eta^{1/2}(x/\tht_n(h_n))}{\tht_n^{1/2}(h_n)}
    &-\frac{\eta^{1/2}(x/\tht_0)}{\tht_0^{1/2}}\Bigr)^2\\
  & = 
   \frac{\eta(x/\tht_n(h_n))}{\tht_n(h_n)}\1_{[\theta_0,\theta_n(h_n)]}(x)
    +\Bigl(\frac{\eta^{1/2}(x/\tht_n(h_n))}{\tht_n^{1/2}(h_n)}
    -\frac{\eta^{1/2}(x/\tht_0)}{\tht_0^{1/2}}\Bigr)^2
      \1_{[0,\tht_0]}(x).
  \end{split}
\]
Note that the first term is bounded from above by 
$(e^{2S}/\tht_0)\1_{[\tht_0,\tht_n(M)]}(x)$. To upper bound the second term, 
we use the absolute continuity of $\eta^{1/2}$. Let 
$g(y) = (\eta(x/y)/y)^{1/2}$,
\[
  \Bigl|\frac{\eta^{1/2}(x/\tht_n(h_n))}{\tht_n^{1/2}(h_n)}
   -\frac{\eta^{1/2}(x/\tht_0)}{\tht_0^{1/2}}\Bigr|
    = \Bigl|\int_{\tht_0}^{\tht_n(h_n)}g'(y)\,dy\Bigr|
    \leq \int_{\tht_0}^{\tht_n(M)}\Bigl|g'(y)\Bigr|\, dy.
\]
By the definition of the nuisance space, for $y\in[\tht_0,\tht_n(M)]$, and 
$x \leq \tht_0$,
\[
  |g'(y)| \leq \frac{e^S}{\tht_0^{3/2}}(S+1),
\]
and then,
\[
  \Bigl(\frac{\eta^{1/2}(x/\tht_n(h_n))}{\tht_n^{1/2}(h_n)}
    -\frac{\eta^{1/2}(x/\tht_0)}{\tht_0^{1/2}}\Bigr)^2 \leq
   \frac{M^2}{n^2}\frac{e^{2S}}{\tht_0^3}(S+1)^2.
\]
Similarly for $h_n < 0$,
\[
  \begin{split}
  \Bigl(\frac{\eta^{1/2}(x/\tht_n(h_n))}{\tht_n^{1/2}(h_n)}
    &-\frac{\eta^{1/2}(x/\tht_0)}{\tht_0^{1/2}}\Bigr)^2\\
   &\leq \frac{e^{2S}}{\tht_0-M/n}\1_{[\tht_n(-M),\tht_0]}(x) 
     + \frac{M^2}{n^2}\frac{e^{2S}}{(\tht_0-M/n)^3}
      \Bigl(\frac{S\tht_0}{\tht_0-M/n}+1\Bigr)^2\1_{[0,\tht_0]}(x).
  \end{split}
\]
Combining these results, we obtain a bound for the squared Hellinger distance:
\[
    H^2(P_{\theta_n(h_n),\eta},P_{\theta_0,\eta})
    \leq \frac{Me^{2S}}{n\tht_0} + \frac{Me^{2s}}{n\tht_0-M}
    + \frac{M^2}{n^2}\frac{e^{2S}}{\tht_0^2}(S+1)^2
    + \frac{M^2}{n^2}\frac{e^{2S}\tht_0}{(\tht_0-M/n)^3}
          \Bigl(\frac{S\tht_0}{\tht_0-M/n}+1\Bigr)^2
\]  
\end{proof}
To verify condition {\em (vi)} of Corollary~\ref{cor:RateFree} we now check 
condition (\ref{eq:Condition}) of Lemma~\ref{lem:MarginalLR}.
\begin{lemma}
\label{lem:LR2}
Let $(M_n)$, $M_n\to\infty$, $M_n \leq n$ for $n \geq 1$, $M_n=o(n)$ be given. 
Then there exists a constant $C>0$ such that the condition of 
Lemma~\ref{lem:MarginalLR} is satisfied.
\end{lemma}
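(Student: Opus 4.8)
The plan is to establish condition~(\ref{eq:Condition}) of Lemma~\ref{lem:MarginalLR} directly, following the proof of Lemma~\ref{lem:LR} but with the two tails of $\Tht_n^c$ playing interchanged roles: for the scale model a value $\tht<\tht_0$ shrinks the support $[0,\tht]$ and excludes data points, whereas a value $\tht>\tht_0$ spreads the mass out, and the decay of the corresponding likelihood ratio is governed by the $1/\tht$ normalisation. Let $(M_n)$ with $M_n\to\infty$, $M_n\le n$ and $M_n=o(n)$ be given. First I would introduce the event $E_n=\{X_{(n)}\ge\tht_0-M_n/n\}$, equivalently $\{\nabla_n\le M_n\}$, and check $P_0^n(E_n)\to1$: writing $c=\inf_{[0,1]}\eta_0>0$ (finite and positive by Lemma~\ref{lem:Esscher2}) and substituting $x=\tht_0 y$,
\[
  P_0^n\bigl(X_{(n)}<\tht_0-M_n/n\bigr)
  =\Bigl(1-\int_{\tht_0-M_n/n}^{\tht_0}p_{\tht_0,\eta_0}(x)\,dx\Bigr)^{n}
  \le\exp\Bigl(-n\int_{1-M_n/(n\tht_0)}^{1}\eta_0(y)\,dy\Bigr)
  \le e^{-cM_n/\tht_0}\to0.
\]

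On $E_n$, for every $\tht\in\Tht$ with $\tht<\tht_0-M_n/n$ one has $X_{(n)}>\tht$, hence $p_{\tht,\eta}(X_{(n)})=0$ for all $\eta\in H$ and $\PP_n\log(p_{\tht,\eta}/p_{\tht_0,\eta})=-\infty$, so the required bound holds trivially on this part of $\Tht_n^c$. For the remaining part, $\tht\ge\tht^*:=\tht_0+M_n/n$, I would use monotonicity of $\tht\mapsto p_{\tht,\eta}(x)$: a direct computation gives $\partial_\tht p_{\tht,\eta}(x)=-\tht^{-2}\bigl(\eta(x/\tht)+(x/\tht)\eta'(x/\tht)\bigr)\le0$, since $\eta$ is nonnegative and (being monotone increasing) has $\eta'\ge0$; thus $\tht\mapsto p_{\tht,\eta}(x)$ is non-increasing on $[x,\infty)$. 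As $X_i\le\tht_0<\tht^*$ $P_0$-almost surely, this yields, for every $\eta\in H$,
\[
  \sup_{\tht\ge\tht^*}\PP_n\log\frac{p_{\tht,\eta}}{p_{\tht_0,\eta}}
  \le\PP_n\log\frac{p_{\tht^*,\eta}}{p_{\tht_0,\eta}}
  =\log\frac{\tht_0}{\tht^*}+\frac1n\sum_{i=1}^{n}\log\frac{\eta(X_i/\tht^*)}{\eta(X_i/\tht_0)}.
\]

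The sum on the right-hand side is non-positive for every $\eta$, because $\eta$ is monotone increasing and $X_i/\tht^*\le X_i/\tht_0$; this is precisely what makes the bound uniform over the nuisance space. The first term equals $-\log\bigl(1+M_n/(n\tht_0)\bigr)$, which is at most $-M_n/(2n\tht_0)$ for $n$ large enough, using $-\log(1+u)\le-u/2$ for $u=M_n/(n\tht_0)\to0$. Combining the two parts of $\Tht_n^c$, on $E_n$ we obtain $\sup_{\eta\in H}\sup_{\tht\in\Tht_n^c}\PP_n\log(p_{\tht,\eta}/p_{\tht_0,\eta})\le-M_n/(2n\tht_0)$, so~(\ref{eq:Condition}) holds with, say, $C=1/(2\tht_0)$ since $P_0^n(E_n)\to1$.

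I do not expect a real obstacle here; the only points deserving care are that the monotonicity of $\tht\mapsto p_{\tht,\eta}(x)$ is used only on the range $\tht\ge x$, which is legitimate on $E_n$ together with $X_i\le\tht_0$, and that the supremum over $H$ is harmless exactly because the $\eta$-ratio contributes non-positively and uniformly. Once condition~(\ref{eq:Condition}) is in hand, Lemma~\ref{lem:MarginalLR} delivers marginal posterior consistency at rate $n^{-1}$, \ie\ condition~\emph{(vi)} of Corollary~\ref{cor:RateFree}, and so, together with Lemmas~\ref{lem:Esscher2}, \ref{lem:Entropy2}, \ref{lem:SupInKL2} and~\ref{lem:Hellinger2}, all hypotheses of Corollary~\ref{cor:RateFree} are met for the scaling model (modulo the reflection to the positive LAE version noted in Subsection~\ref{sub:semiscale}), which is what is needed for Theorem~\ref{thm:SBvM-Example2}.
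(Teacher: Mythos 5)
Your proposal is correct and follows the same strategy the paper intends for Lemma~\ref{lem:LR2} (the paper's own proof is only a sketch referring to Lemma~\ref{lem:LR} and recording the bound $g'(y)\le -\eta(X_i/y)/y^2$). Both arguments use the fact that $\tht\mapsto p_{\tht,\eta}(x)$ is non-increasing for $\tht\ge x$ and that the effective decay of the log-likelihood ratio at $\tht^*=\tht_0+M_n/n$ comes from the $1/\tht$ normalising factor. Your version is a bit cleaner: by writing $\log p_{\tht^*,\eta}(x)-\log p_{\tht_0,\eta}(x)=\log(\tht_0/\tht^*)+\log\bigl(\eta(x/\tht^*)/\eta(x/\tht_0)\bigr)$ you can discard the $\eta$-ratio term outright (it is non-positive and uniform in $\eta$), so no uniform lower bound on $\eta$ is needed in this step, only in the event $E_n$. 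The paper's derivative bound $g'(y)\le-\eta(X_i/y)/y^2$ carries the same information but requires an extra step to convert it into a multiplicative estimate; your reformulation avoids that and yields the explicit constant $C=1/(2\tht_0)$ directly.
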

\begin{proof}
The proof of this lemma is similar to the proof of Lemma~\ref{lem:LR}. Therefore, 
we only note that by absolute continuity of $\eta$ we have
\[
	\frac{\eta(X_i/\tht^*)}{\tht^*} = \frac{\eta(X_i/\tht_0)}{\tht_0} 
       + \int_{\tht_0}^{\tht^*}g'(y)\, dy,
\]
where $g(y) = \eta(X_i/y)/y$, and
\[
	g'(y) = \frac{1}{y}\eta'(X_i/y)\Bigl(-\frac{X_i}{y^2}\Bigr) 
	 +\eta(X_i/y)\Bigl(-\frac{1}{y^2}\Bigr) \leq  \eta(X_i/y)\Bigl(-\frac{1}{y^2}\Bigr).
\]
\end{proof}
\whitespace

To demonstrate that priors exist such that $\scrL\subset\supp(\Pi_\scrL)$,
an explicit construction based on the distribution of Brownian sample
paths is provided in the following simplified version of Lemma~\ref{lem:Prior}. 
\begin{lemma}
\label{lem:Prior2}
Let $S > 0$ be given. Let $\{W_t:t\in[0,1]\}$ be Brownian motion on
$[0, 1]$ and let $Z$ be independent and distributed $N(0,1)$. We
define the prior $\Pi_\scrL$ on $\scrL$ as the distribution of the
process,
\[
  \score(t) = S\Psi(Z+W_t),
\]
where $\Psi:[-\infty,\infty]\rightarrow[-1,1]: x\mapsto 2\arctan (x)/\pi$.
Then $\scrL\subset\supp\bigl(\Pi_\scrL\bigr)$.
\end{lemma}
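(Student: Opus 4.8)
The plan is to reuse, essentially verbatim, the argument given for Lemma~\ref{lem:Prior}, the present situation being strictly simpler: since the score parameter now lives on the bounded interval $[0,1]$ rather than on the extended half-line, the auxiliary time change (the inner $\Psi$ in Lemma~\ref{lem:Prior}) is no longer needed, and one is left with a single continuous map from path space into $\scrL$. The proof reduces to combining two facts: the driving measure on path space has full support, and the map sending paths to scores is continuous with image dense in $\scrL$.

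First I would record that the law $\Pi$ of the process $t\mapsto Z+W_t$ on $(C[0,1],\|\cdot\|_\infty)$ has full support. Writing $C_0[0,1]=\{f\in C[0,1]:f(0)=0\}$, the pair $(Z,W)$ has law $N(0,1)\otimes(\text{Wiener measure})$ on $\RR\times C_0[0,1]$; the first factor has support $\RR$ and, by the classical support theorem for Brownian motion, the second has support all of $C_0[0,1]$. Since $(z,f)\mapsto z+f$ is a continuous surjection onto $C[0,1]$ and the support of a pushforward contains the closure of the image of the support, $\supp(\Pi)=C[0,1]$ (this is exactly the assertion ``$\supp(\Pi)=C[0,1]$'' used in the proof of Lemma~\ref{lem:Prior}). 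Next I would set $g:C[0,1]\to\scrL$, $g(w)(t)=S\Psi(w(t))$; since $|\Psi|\le1$ this lands in the ball of radius $S$, and since $\Psi$ is Lipschitz with constant $2/\pi$ one has $\|g(w)-g(w')\|_\infty\le(2S/\pi)\|w-w'\|_\infty$, so $g$ is continuous, hence Borel-to-Borel measurable, and $\Pi_\scrL=g_*\Pi$ by construction.

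Running the same chain of inclusions as in Lemma~\ref{lem:Prior} (if $T\subset\scrL$ is closed with $\Pi_\scrL(T)=1$, then $g^{-1}(T)$ is closed with $\Pi(g^{-1}(T))=1$, so $\supp(\Pi)\subset g^{-1}(T)$; intersecting over all such $T$ gives $g(\supp(\Pi))\subset\supp(\Pi_\scrL)$) and using that $\supp(\Pi_\scrL)$ is closed and $\supp(\Pi)=C[0,1]$, I obtain $\overline{g(C[0,1])}\subset\supp(\Pi_\scrL)$. It then remains to show $g(C[0,1])$ is dense in $\scrL$: given $\score\in\scrL$, \ie\ $\|\score\|_\infty\le S$, put $\score_m=(1-1/m)\score$, so that $\score_m/S$ takes values in the compact subinterval $[-1+1/m,1-1/m]$ of $(-1,1)$; then $w_m:=\Psi^{-1}(\score_m/S)$ is a well-defined element of $C[0,1]$ with $g(w_m)=\score_m\to\score$ uniformly. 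Hence $\scrL\subset\overline{g(C[0,1])}\subset\supp(\Pi_\scrL)$.

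I do not anticipate a genuine obstacle here. The only inputs beyond routine bookkeeping are the full-support property of Brownian motion, which is standard, and the elementary density argument via $\Psi^{-1}$; the structural part of the argument has already been carried out in the proof of Lemma~\ref{lem:Prior}, and the simplification amounts to deleting the layer involving the domain reparametrisation.
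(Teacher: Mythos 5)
Your proof is correct and follows essentially the same route as the paper's argument for Lemma~\ref{lem:Prior}, with the time-change layer dropped (as the paper itself signals by calling Lemma~\ref{lem:Prior2} a ``simplified version''). Your explicit density step via $\score_m=(1-1/m)\score$ and $w_m=\Psi^{-1}(\score_m/S)$ usefully fills in what the paper compresses into the phrase ``the continuity does not change under $g\circ f$''.
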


\begin{lemma}
\label{lem:IntBounds}
For every differentiable $\eta$ and $\ep > 0$ the following inequalities hold:
\[
  \eta(0)\ep - \ep\int_0^\ep|\eta'(x)|\, dx \leq \int_0^\ep \eta(x)\, dx 
  \leq \eta(0)\ep + \ep \int_0^\ep |\eta'(x)|\, dx.
\]
\end{lemma}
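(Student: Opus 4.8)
The plan is to reduce the statement to the fundamental theorem of calculus together with a crude bound on a double integral. First I would write, for every $x\in[0,\ep]$,
\[
  \eta(x) = \eta(0) + \int_0^x \eta'(t)\,dt,
\]
which is legitimate because $\eta$ is differentiable (indeed continuously differentiable) on the relevant interval, so $\eta'$ is integrable and $\eta$ is its primitive. Integrating this identity over $x\in[0,\ep]$ and using Fubini's theorem on the resulting iterated integral gives
\[
  \int_0^\ep \eta(x)\,dx = \eta(0)\,\ep + \int_0^\ep\!\!\int_0^x \eta'(t)\,dt\,dx.
\]

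Next I would bound the remainder term. For every $x\in[0,\ep]$ one has $-\int_0^\ep|\eta'(t)|\,dt \le \int_0^x \eta'(t)\,dt \le \int_0^\ep|\eta'(t)|\,dt$, and integrating this chain of inequalities over $x\in[0,\ep]$ yields
\[
  \Bigl|\int_0^\ep\!\!\int_0^x \eta'(t)\,dt\,dx\Bigr| \leq \ep\int_0^\ep |\eta'(x)|\,dx.
\]
Combining the two displays gives both the asserted upper bound and the asserted lower bound simultaneously.

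There is essentially no obstacle here; the only point requiring a word of care is that the primitive representation $\eta(x)=\eta(0)+\int_0^x\eta'$ is valid, which holds under the differentiability hypothesis in force wherever the lemma is applied (for instance for the nuisance densities produced by the Esscher transforms of Lemmas~\ref{lem:Esscher} and~\ref{lem:Esscher2}, which are continuously differentiable). One could alternatively invoke the mean value theorem for integrals to write $\int_0^\ep\eta = \ep\,\eta(\xi)$ for some $\xi\in(0,\ep)$ and then estimate $|\eta(\xi)-\eta(0)| \le \int_0^\ep|\eta'|$, but the Fubini route above is cleaner and avoids selecting $\xi$.
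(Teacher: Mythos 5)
Your proof is correct and is essentially the same as the paper's. The paper obtains $\int_0^\ep\eta(x)\,dx = \eta(0)\ep + \int_0^\ep(\ep-x)\eta'(x)\,dx$ by integration by parts and then bounds $(\ep-x)\eta'(x)$ pointwise by $\ep|\eta'(x)|$; your route via the fundamental theorem of calculus and integrating the identity over $x$ produces the same remainder term (your iterated integral equals $\int_0^\ep(\ep-x)\eta'(x)\,dx$ by Fubini), and your crude bound on the inner integral yields the same final estimate.
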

\begin{proof}
Integration by parts yields
\[
  \int_0^\ep\eta(x)\, dx = \eta(0)\ep +\int_0^\ep (\ep-x)\eta'(x)\, dx.
\]
Since $-\ep|\eta'(x)|\leq (\ep-x)\eta'(x) \leq \ep|\eta'(x)|$ for $x\in[0,\ep]$,
the assertion holds.
\end{proof}


\section*{Acknowledgments}
\label{sec:ack}

The authors would like to thank A.~W.~van der Vaart and J.~H.~van Zanten for
valuable discussions and suggestions. B.~J.~K. Kleijn would also like to thank
the {\it Statistics Department of Seoul National University, South Korea}
and the {\it Dipartimento di Scienze delle Decisioni, Universita Bocconi,
Milano, Italy} for their kind hospitality. B.~T. Knapik was supported by the
Netherlands Organization for Scientific Research NWO and ANR Grant `Banhdits' 
ANR--2010--BLAN--0113--03. He would also like to acknowledge the support 
of {\it CEREMADE, Universit\'e Paris-Dauphine} and {\it ENSAE--CREST}, and thank 
I.~Castillo, J.~Rousseau and A.~Tsybakov for insightful comments.


\end{document}